\numberwithin{equation}{section}
\numberwithin{equation}{subsection}
\theoremstyle{plain}
\newtheorem{theorem}[equation]{Theorem}
\newtheorem{lemma}[equation]{Lemma}
\newtheorem{proposition}[equation]{Proposition}
\newtheorem{question}[equation]{Question}
\newtheorem{thm}[equation]{Theorem}
\newtheorem{cor}[equation]{Corollary}
\newtheorem{prop}[equation]{Proposition}
\newtheorem*{chtheorem}{Characterization Theorem}
\newtheorem*{chproblem}{Characterization Problem}
\newtheorem*{repproblem}{Representability Problem}
\theoremstyle{definition}
\newtheorem{example}[equation]{Example}
\newtheorem{remark}[equation]{Remark}
\newtheorem{definition}[equation]{Definition}
\DeclareMathOperator{\Hom}{{\rm Hom}}
\newcommand{\tX}{\widetilde{X}}
\def\C{\mathbb C}
\def\Q{\mathbb Q}
\def\Z{\mathbb Z}
\newcommand{\calv}{{\mathcal V}}
\newcommand{\calO}{{\mathcal O}}
\newcommand{\calS}{{\mathcal S}}
\newcommand{\V}{\calv}
\newcommand{\w}{\omega}
\newcommand{\al}{\alpha}
\renewcommand{\Z}{\mathbb{Z}}
\newcommand{\Na}{\mathbb{N}}
\newcommand{\Se}{\mathcal{S}}
\renewcommand{\Q}{\mathbb{Q}}
 \newcommand{\G}{\Gamma}
\newcommand{\oo}{\mathfrak{o}}
\newcommand{\q}{q}
\newcommand{\z}[1]{\widehat{#1}}
\newcommand{\h}[1]{\left\lbrace{#1}\right\rbrace}
\author{Zsolt Baja}
\thanks{}
\address{Babe\c{s}-Bolyai University, Faculty of Mathematics and Computer Science,\newline \hspace*{6mm} 
Str. Mihail Kog\u{a}lniceanu nr. 1, 400084 Cluj-Napoca, Romania}
\email{zsolt.baja@ubbcluj.ro}
\author{Tam\'as L\'aszl\'o}
\thanks{}
\address{Babe\c{s}-Bolyai University, Faculty of Mathematics and Computer Science,\newline \hspace*{6mm} 
Str. Mihail Kog\u{a}lniceanu nr. 1, 400084 Cluj-Napoca, Romania;\vspace*{1mm}\newline\hspace*{3mm} 
Alfr\'ed R\'enyi Institute of Mathematics, Re\'altanoda u. 13-15, H-1053 Budapest, Hungary}
\email{tamas.laszlo@ubbcluj.ro}
\thanks{TL is partially supported by the  J\'anos Bolyai Research Scholarship of the Hungarian Academy of Sciences and the NKFIH Grant ``\'Elvonal (Frontier)'' KKP 144148. He is very grateful to the `Erd\H os Center' for the warm hospitality and for providing with him an excellent research environment during his stay in Budapest.}
\title{Flat semigroups and weighted homogeneous surface singularities}
\begin{document}

\keywords{numerical semigroups, Frobenius problem, weighted homogeneous surface singularities, Seifert rational homology spheres, flat semigroups}

\subjclass[2010]{Primary. 20M14, 32S05, 32S25, 32S50
Secondary. 14B05}

\begin{abstract}
We consider numerical semigroups associated with normal weighted homogeneous surface singularities with rational homology sphere links. We say that a semigroup is representable if it can be realized in this way.

In this article we characterize the representable semigroups by proving that they are exactly those semigroups which can be written as a quotient of a flat semigroup. As an intermediate step, we study the class of flat semigroups and show that they can be represented by a special subclass of isolated complete intersection singularities whose defining equations can be given explicitely.
\end{abstract}

\maketitle

% \setcounter{tocdepth}{1}
% \tableofcontents

\linespread{1.2}

%\date{}

\pagestyle{myheadings} \markboth{{\normalsize Zs. Baja and T. L\'aszl\'o}} {{\normalsize Flat semigroups and weighted homogeneous surface singularities}}

%\setcounter{tocdepth}{1}
%\tableofcontents

\section{Introduction}

\subsection{} 

One of the most classical problems in the theory of numerical semigroups is the {\it Diophantine Frobenius problem}, asking the following: given the numerical semigroup $G(a_1,\dots,a_n)$ generated by relatively prime integers $a_1,\dots,a_n$, find the largest integer - called the {\it Frobenius number} - that is not contained in the semigroup, ie. it is not representable as a non-negative integer combination of $a_1,\dots,a_n$. 

Several ideas from different areas of mathematics have been studied to find `closed' formulae and algorithms to calculate the Frobenius number. From the point of view of the formulae, although several results for peculiar cases and general bound estimates exist in the literature, the problem is still open in full generality. In our forthcomming discussions we will only touch a small part of the `classical approaches', nevertheless the interested reader might consult for more details the excelent monograph of Ram\'irez Alfons\'in \cite{RamAlf}.

\subsection{} The present article focuses on a relatively new method developed by A. N\'emethi and the second author in \cite{stronglyflat}. This is based on a subtle connection with the theory of complex normal surface singularities. Using geometrical and topological techniques of singularity theory, one can solve the Frobenius problem for numerical semigroups which can be related with certain singularities. 

\vspace{0.2cm}
In the theory of surface singularities numerical semigroups appear naturally in many diferent contexts. In this article we consider the case of weighted homogeneous normal surface singularities. 

Let $(X,0)$ be a normal weighted homogeneous surface singularity. Being the germ of an affine variety $X$ with good $\mathbb{C}^*$-action, its affine coordinate ring is $\Z_{\geq 0}$-graded: $R_X=\oplus_{\ell\geq 0}R_{X,\ell}$. Then the set $$\calS_{(X,0)}:=\{\ell \in \Z_{\geq 0} \ | \ R_{X,\ell}\neq 0\}$$ is a numerical semigroup. In fact, by Pinkham \cite{art_pink} we know that if the link $M$ of the singularity - which is a negative definite Seifert $3$-manifold in this case - is a rational homology sphere, then $\calS_{(X,0)}$ is a topological invariant, associated with the link $M$, or with star-shaped minimal good dual resolution graph $\Gamma$. Accordingly, it will be also denoted by $\calS_{\Gamma}$, see section \ref{ss:reprsem}. 

In this note we will say that a numerical semigroup is {\it representable} if it can be realized as $\calS_{\Gamma}$.  

One also follows by \cite{art_pink} that the representable semigroups can be expressed as $\calS_{\Gamma}=\{\ell \in \Z \ | \ N(\ell)\neq 0\}$ where $N(\ell)$ is a quasi-linear function defined by the Seifert invariants, cf. section \ref{ss:WHS}. Using this presentation and the arithmetics of the star-shaped minimal good dual resolution graph associated with the singularity, \cite{stronglyflat} developed a `closed' formula for the Frobenius number. Moreover, in that work the authors proved that a special class of representable semigroups appears naturally in the classical theory of numerical semigroups: the semigroups associated with Seifert integral homology $3$-spheres are exactly the `strongly flat semigroups' with at least $3$ generators, considered by the work of Raczunas and Chrz\c astowski-Wachtel \cite{art_rch} as special semigroups realizing a sharp upper bound for the Frobenius problem. Note that the case of two generators will be threated in this article, clarifying completely the class of strongly flat semigroups.

\vspace{0.2cm}
The aforementioned results imposed the following natural questions towards to the study of representable semigroups (see \cite[sect. 8]{stronglyflat}): 
\begin{chproblem}
How can the representable semigroups be characterized?
\end{chproblem}

\begin{repproblem}
How big is the set of representable semigroups inside the set of all numerical semigroups? 
\end{repproblem}

\subsection{}The aim of this work is to give an answer to the characterization problem. In the sequel, we will provide an overview of our study and state the main result of the article. 

\vspace{0.2cm}
First of all, we observe that the quotient of a representable semigroup is also representable. Then we study the class of `flat semigroups' appeared in the classification theme of \cite{art_rch}. It turns out that they are crucial towards to the characterization problem. First, we prove that they are representable. Moreover, for a given presentation of a flat semigroup we construct a {\it canonical representative} $M$ whose specialty induces many of the properties of the semigroup. In particular, one can show that every flat semigroup is symmetric and its Frobenius number realizes a sharp upper bound considered by Brauer \cite{art_brauer1}. Furthermore, they are interesting from singularity theoretic point of view as well, since these semigroups can be associated with a special family of isolated complete intersection singularities whose equations are given explicitely in section \ref{s:ICIS}. The main theorem of this manuscript is as follows. 

\begin{chtheorem}
A numerical semigroup is representable if and only if it is a quotient of a flat semigroup.  
\end{chtheorem}

The strategy of the proof is as follows. From the previous observations one deduces that a quotient of a flat semigroup is representable. For the converse statement, we fix a negative definite Seifert rational homology sphere as a representative of the given numerical semigroup. Then, we prove that by perturbing the Seifert invariants in such a way that the associated semigroup does not change, we can always achieve a representative whose associated semigroup can be realized as a quotient of a flat semigroup.

\vspace{0.2cm}
It is worth to emphasize that the characterization theorem rephrases the representability problem completely via the language of numerical semigroup theory: 
\begin{center}
`{\it how big is the set of quotients of flat semigroups in the set of numerical semigroups?}' 
\end{center}
Further speculations regarding this question and a connection with the work of \cite{symmhalf} and \cite{symmquotient} with respect to quotients of symmetric semigroups can be found at the end of the article. 

\subsection{} The structure of the article is as follows. Section \ref{s:prliminaries} summarizes the necessary  preliminaries regarding the Frobenius problem, the `flat'  classification of numerical semigroups following  \cite{art_rch}, weighted homogeneous surface singularities and the arithmetics of their star-shaped minimal good dual resolution graphs. Furthermore, the last part define the representable semigroups and collects some already known results about them.   

Section \ref{s:rpsg} contains some new observations regarding representable semigroups. In the first part we introduce the sum of the SSR graphs (see section \ref{sec:op}) and using this concept we discuss the representability of semigroups with two generators. Furthermore, in section \ref{sec:bounds} we construct monoids as bounds for a representable semigroup. This serves the base idea for the study of flat semigroups in section \ref{sec:flat}. In this part we prove that they are representable (Theorem \ref{thm_flat_rep}) by constructing a canonical representative, and we study some of their properties. In particular, in section \ref{s:ICIS} we give explicit equations for a family of isolated complete intersection singularities whose links are the canonical representatives of a flat semigroup. 

Finally, section \ref{s:ch} explains a perturbation process for the Seifert invariants of a reresentative, and proves the main theorem (Theorem \ref{charthm}) of our work. Section \ref{ss:specrem} ends the paper by giving important examples and discussing a new reformulation of the representability problem.

% \subsection*{Acknowledgements}
% 
% The second author is partially supported by the NKFIH Grant ``\'Elvonal (Frontier)'' KKP 144148. 

\section{Preliminaries}\label{s:prliminaries}

\subsection{Numerical semigroups and their  `flat'  classification}

\subsubsection{\bf The Frobenius problem}

The Diophantine Frobenius problem aims to find an explicit formula for the greatest integer not representable as a nonnegative linear form of a given system of $d\geq 2$ relatively prime integers $1\leq a_1\leq \ldots \leq a_d$. The  integer defined in this way is called the {\it Frobenius number} of the system. In numerical semigroup language, let $G(a_1,\ldots,a_d)$ be the numerical semigroup (ie. submonoid of $\mathbb{N}$ with finite complement) generated by the integers from the above system. Then the Frobenius number is the largest gap of $G(a_1,\ldots,a_d)$, for which we will use the standard notation $f_{G(a_1,\ldots,a_d)}$.
 
The problem is still open in full generality, however several fomulae for special systems and general bounds exist in the literature. For example, the very first result is the  Sylvester formula expressing $f_{G(a_1,a_2)}=a_1a_2-a_1-a_2$.

For the classical approach and different aspects of the problem the interested reader might consult  the monograph of Ram\'irez Alfons\'in \cite{RamAlf}. Further details regarding the theory of numerical semigroups can be found eg. in \cite{book_nsgp} and \cite{book_nsgp2}.
\vspace{0.2cm}

In the sequel we will present some of the general bounds for the Frobenius number which will be important for our purpose. 
%These bounds motivated the birth of the 'flat' classification for numerical semigroups due to Raczunas and Chrz\c astowski-Wachtel \cite{art_rch} which will be presented in section \ref{ss:classsgp}.  
\vspace{0.2cm}

Brauer \cite{art_brauer1} considered the following upper bound 
\begin{equation}\label{eq:f=T}
 f_{G(a_1,\dots,a_n)}\leq T(a_1,\dots,a_n):=\sum_{i=1}^n \Big(\frac{d_{i-1}}{d_i}-1\Big) a_i, 
\end{equation}
where $d_0=0$ and $d_i=\gcd(a_1,\dots,a_i)$ for all $i\geq 1$.  Moreover, in \cite{art_brauer2} the authors characterized those semigroups which satisfy the equality in (\ref{eq:f=T}), namely 
$$ f=T\iff a_{i+1}/d_{i+1}\in G(a_1/d_i,\dots,a_{i}/d_{i})\,\ \ \mbox{for every} \ 1 \leq i\leq n-1.$$
We notice that the value of $T$, as well as the above criterion depends on the order of the generators, and in general only an appropriate permutation gives the equality $f=T$.

Raczunas and Chrz\c{a}stowski-Wachtel \cite{art_rch} 
characterized another subclass of semigroups for which $f=T$ holds and $T$ can be expressed  independently on the permutation of  the generators. These are the so-called {\it flat semigroups}. 
In particular, they considered another upper bound 
\begin{equation}\label{eq:B}
B(a_1,a_2,\dots,a_n):=(n-1)\mathrm{lcm}(a_1,\dots,a_n)-\sum_i a_i,
\end{equation}
which satisfies 
$ f_{G(a_1,\dots,a_n)}\le T(a_1,\dots,a_n)\le B(a_1,\dots,a_n)$,  
and characterized the class of semigroups for which the equality $f=T=B$ holds. They are called the {\it strongly flat semigroups} and form a subclass of the flat ones. \\
In the followings we will describe precisely these classes following the discussion in \cite{art_rch}.

\subsubsection{\bf Flat classification of semigroups  \cite{art_rch}}\label{ss:classsgp}
Based on the decomposition of the generators, one considers four  `shades' 
of flatness: {\it strongly flat, flat, almost flat and non-flat semigroups}.

Let $A=\{a_1,\dots,a_n\}$ be a system of generators of a 
numerical semigroup $\calS$, $\gcd(a_1,\dots,a_n) =1$. If we consider the numbers $\q_i:=\gcd(a_1,\dots,a_{i-1},a_{i+1},\dots, a_n)$ and $\widehat{\q}_i:=\prod_{j\neq i} \q_j$ for all $i\in\{1,\dots,n\}$, then one follows that $\gcd(\q_i,\q_j)=1$ for every $ i\neq j$. Hence $\widehat{\q}_i \ | \ a_i$ and we can define $\widehat{s}_i:=a_i/\widehat{\q}_i$ (note also that $\gcd(\widehat{s}_i,\q_i)=1$). Then the system of generators can be presented in the following form:
\begin{equation}\label{eq:genset}
 A=\{a_1,\dots,a_n\}=\{\widehat{s}_1 \widehat{\q}_1,\dots, \widehat{s}_n \widehat{\q}_n\}.
\end{equation}

\begin{definition}
    \label{df:flatnes}
The set $A$ is 
   \begin{itemize}
       \item {\it Strongly flat} {\bf (SF)} if  one has $\widehat{s}_i=1$ for all $i$;
       \item {\it Flat} {\bf (F)}  if there exists an $i$ such that $\widehat{s}_i=1$;
       \item {\it Almost flat} { \bf (AF)} if there exists an $i$ such that $\q_i>1$ and 
       \item {\it Non-flat} {\bf (NF)} if  for all $i$ one has $\q_i=1$.
   \end{itemize} 
We say that a numerical semigroup $\calS$ is strongly flat, flat, almost flat or non-flat if the corresponding condition is satisfied for the minimal set of generators. 
\end{definition}
\begin{remark}
Note that the full semigroup $\calS=\mathbb{N}$ and the semigroups minimally generated by two elements are automatically {\bf SF}. On the other hand, we have $\textbf{SF}\subset \textbf{F}\subset \textbf{AF}$. Moreover, if one of these three conditions is satisfied for a non-minimal set of generators, then it is automatically satisfied for the minimal too.  This property does not hold for $\textbf{NF}$. 
\end{remark}

As we have already mentioned, the strongly flat semigroups are characterized by the equality $f_{\calS}=B(a_1,\dots,a_n)$, where  $\{a_1,\dots,a_n\}$ is the minimal set of generators of $\calS$. 
Moreover, using the presentation (\ref{eq:genset}) of the generators 
and the notation $a:=\mathrm{lcm}(a_1,\dots,a_n)$, in this case the Frobenius number can be rewritten as 
$$
f_{\calS}=a\Big(n-1-\sum_i \frac{1}{\q_i}\Big).    
$$

If $\calS$ is flat then $f_{\calS}=T(b_1,\dots,b_n),$ where $b_1,\dots,b_n$ is 
an appropriate permutation of $a_1,\dots,a_n$. However, in this case the Frobenius number can also be expressed in a direct form as 
\begin{equation}\label{eq:Frobflat}
f_{\calS}=\sum_i\left(q_i-1\right)a_i-\prod_i q_i,    
\end{equation}
see \cite[Thm. 2.5]{art_rch}.
\vskip 0.3cm

\subsection{On weighted homogeneous surface singularities}\label{ss:WHS}
\subsubsection{} A normal weighted homogeneous surface singularity $(X,0)$ is defined as the germ at the origin of an affine surface $X$ with a  good and effective  $\mathbb{C}^*$-action. This  means that its affine coordinate ring is $\mathbb{Z}_{\geq 0}$--graded: $R_X=\oplus_{\ell\geq 0} R_{X,\ell}$. (In fact, all finitely generated $\mathbb{Z}_{\geq 0}$--graded $\mathbb{C}$-algebra corresponds to an affine variety with good $\mathbb{C}^*$-action.)

Let $(X,0)$ be a normal weighted homogeneous surface singularity. Then $E_0:=(X\setminus\{0\})/\mathbb{C}^*$ is a smooth compact curve. If we denote by $T$ the closure of the graph of the map $X\setminus \{0\}\to E_0$ in $X\times E_0$, then the first projection $T\to X$ is a modification of $(X,0)$, while the second projection $T\to E_0$ is a Seifert line bundle with zero section $E_0$. $T$ has only cyclic quotient singularities at the intersection of $E_0$ with each singular fiber. After resolving these singularities we get the minimal good resolution of $\pi:\widetilde X\to X$ whose dual resolution graph $\Gamma$ is \textit{star-shaped}, cf. \cite{OW}. Thus, the exceptional divisor $\pi^{-1}(0)$ is a normal crossing divisor and only the central curve $E_0$ can have self-intersection number $-1$. We denote by $\{E_v\}_{v\in \calv}$ the irreducible components of $\pi^{-1}(0)$.

On the other hand the $\mathbb{C}^*$-action induces an $S^1$--Seifert action on the link $M$ of the singularity. In particular, $M$ is a negative definite Seifert 3-manifold characterized by its normalized Seifert invariants which will be denoted by $Sf=(-b_0,g;(\alpha_i,\omega_i)_{i=1}^d).$

\subsubsection{\bf Combinatorics and lattices associated with the resolution graph}\label{ss:seifert}

Let $\Gamma$ be the star-shaped minimal good dual resolution graph of a   normal weighted homogeneous surface singularity $(X,0)$, or, equivalently, a plumbing representation of the link $M$. Then $\Gamma$ has $d$ legs with $d\geq 3$. (A leg is a chain of vertices which is connected to the central vertex). Each leg is determined by the so-called normalized Seifert invariant $(\alpha_i,\omega_i)$,
where $0<\omega_i <\alpha_i$ and $\gcd(\alpha_i,\omega_i)=1$. The $i^{\mathrm{th}}$ leg has $\nu_i$ vertices, say $v_{i1},\ldots, v_{i\nu_i}$ ($v_{i1}$ is connected to the central vertex $v_0$)
with Euler decorations (self-intersection numbers)
$-b_{i1},\ldots, -b_{i\nu_i}$, which is given by the Hirzebruch--Jung (negative) continued fraction expansion
$$ \alpha_i/\omega_i=[b_{i1},\ldots, b_{i\nu_i}]=
b_{i1}-1/(b_{i2}-1/(\cdots -1/b_{i\nu_i})\dots) \ \  \ \ (b_{ij}\geq 2).$$
All these vertices (except $v_0$) have genus--decorations zero. The central vertex $v_0$ corresponds to the central genus $g$ curve $E_0$ with self-intersection number $-b_0$.
It is also useful to define $\omega_i'$ satisfying 
\begin{equation}\label{eq:w'}
\omega_i\omega_i'\equiv 1 \, (\mathrm{mod} \,\alpha_i), \ \ 0< \omega_i'<\alpha_i.
\end{equation}
One also knows that  $\alpha_i=\det(\Gamma_i)$, the determinant of the $i^{th}$--leg $\Gamma_i$, $\omega_i=\det(\Gamma_i\setminus v_{i1})$ and  $\omega_i'=\det(\Gamma_i\setminus v_{i\nu_i})$.

In the sequel we will assume that the link $M$ is a {\em (Seifert) rational homology sphere}, or, equivalently, the curve $E_0$ has $g=0$. In this case we will omit the genus from the notations and we will simply write $Sf=(-b_0;(\alpha_i,\omega_i)_{i=1}^d)$. 

The smooth complex analytic surface $\widetilde X$ is the plumbed $4$-manifold associated with $\Gamma$, with boundary $\partial \widetilde{X} = M$. 
We define the lattice $L$ as $H_2(\tilde X,\Z)$, endowed with the non-degenerate negative definite intersection form $I:=(\, , \,)$. It is freely generated by the (classes of the) exceptional divisors $E_v$, $v\in \calv$, that is,
$L=\oplus_{v\in \calv} \Z\langle E_v \rangle$. The dual lattice $L':= \Hom(H_2(\widetilde X,\Z),\Z)$ can be identified with $H_2(\widetilde{X}, M, \Z)$. Moreover, one has that $L'/L\cong H_1(M,\Z)$, which is a finite group and it will be denoted by $H$, cf. \cite{Nfive,nembook}.

Since the intersection form is non--degenerate, $L'$ embeds into $ L_{{\mathbb Q}}:=L\otimes {\mathbb Q}$,
and it can be identified with the rational cycles $\{l'\in L_{{\mathbb Q}}\,:\, (l',L)_{{\mathbb Q}}\subset \Z\}$, where $(\,,\,)_{{\mathbb Q}}$ denotes the extension of the intersection form to $L_{{\mathbb Q}}$.
 Hence, in the sequel we regard $L'$ as $\oplus_{v\in \V} \Z\langle E^*_v \rangle$, the lattice generated by the (anti-)dual cycles $E^*_v\in L_{{\Q}}$, $v\in \V$, where $(E_u^*,E_v)_{{\Q}}=-\delta_{u,v}$ (Kronecker delta) for any $u,v\in \calv$.

We can consider the anti-canonical cycle $Z_K\in L'$ defined by the adjunction formulae $(Z_K,E_v)=e_v+2$ for all $v$. 

We say that the singularity $(X,0)$, or its topological type, is \emph{numerically Gorenstein} if $Z_K\in L$. Note that the $Z_K\in L$ property is independent of the resolution, 
since $Z_K\in L$ if and only if the line bundle $\Omega^2_{X\setminus \{0\}}$ of holomorphic 2--forms
on $X\setminus \{0\}$ is topologically trivial. $(X,0)$ is called \emph{Gorenstein} if 
$\Omega^2_{\tX}$ (the sheaf of holomorphic 2--forms) is isomorphic to $ \calO_{\tX}(-Z_K)$ (or,
equivalently, if the line bundle $\Omega^2_{X\setminus \{0\}}$ is holomorphically trivial).
Note that the adjunction formulae imply the identity 
 \begin{equation}\label{ZK}
 Z_K-E=\sum_{v\in\mathcal{V}}(\delta_v-2)E^*_v,
 \end{equation}
where we denote $E:=\sum_{v\in\mathcal{V}}E_v$ and $\delta_v$ is the valency of the vertex $v$.

\subsubsection{\bf Some key numerical invariants}
The orbifold Euler number of the Seifert 3-manifold $M$ is defined as $e:=-b_0+\sum_i\omega_i/\alpha_i$. Then the negative definiteness of the intersection form is equivalent with  $e<0$. 

Let $\mathfrak{h}:=|H|$ be the order of $H=H_1(M,\mathbb{Z})=L'/L$, and
let  $\mathfrak{o}$ be the order of the class $[E^*_0]$ (or of
the generic $S^1$ Seifert--orbit) in $H$. 
Then,  using the notation  $\alpha:=\mathrm{lcm}(\alpha_1,\ldots,\alpha_d)$, one shows that (see eg. \cite{neumann.abel})
\begin{equation}\label{eq:sei2}
 \mathfrak{h}=\alpha_1\cdots\alpha_d|e| \ \ \mbox{and} \ \ \mathfrak{o}=\alpha|e|.
\end{equation}
In particular, if $M$ is an integral homology sphere (called Seifert homology sphere) then necessarily
all $\alpha_i$'s are pairwise relatively prime and by (\ref{eq:sei2}) $\alpha |e|=1$. 
This gives the Diophantine equation $(b_0-\sum_i\omega_i/\alpha_i)\alpha=1$, which determines all $\omega_i$ and $b_0$
 uniquely by the $\alpha_i$'s. The corresponding Seifert homology sphere is denoted by $\Sigma(\alpha_1,\dots,\alpha_d)$.

% \subsubsection{The combinatorial number $\gamma$}
% Before we recall certain known facts regarding the integers $\{N(\ell)\}_{\ell}$
% (and certain consequences regarding $\mathcal{S}_M$ and $\mathcal{M}_M$)
Next, we define the combinatorial number
\begin{equation}\label{eq:gamma}
\gamma:=\frac{1}{|e|}\cdot \Big( d-2-\sum_{i=1}^d \frac{1}{\alpha_i}\Big)\in \mathbb{Q},
\end{equation}
which has a central importance  regarding properties of
weighted homogeneous surface singularities or Seifert rational homology spheres.
It has several interpretations: it is the `exponent' of the weighted
homogeneous germ $(X,0)$; $-\gamma$ is also called the `log-discrepancy' of $E_0$; $\mathfrak{o}\gamma$ is usually named as the Goto--Watanabe $a$--invariant  of the universal abelian
cover of $(X,0)$, and $e\gamma$ appears as the orbifold Euler characteristic in \cite{neumann.abel} (see also
\cite[3.3.6]{art_nem_emb}). 

Nevertheless, the most important interpretation for our purpose will be  the following. In a star--shaped graph the $E_0$-coefficients of all
 $E^*_v$ associated with the end--vertices   are computed by $-(E_v^*,E^*_0)=1/(|e|\alpha_v)$ and the
 $E_0$-coefficient of
 $E^*_0$ is  $-(E_0^*,E^*_0)=1/|e|$
 (cf. \cite[(11.1)]{NOSZ}).
 Hence,  (\ref{ZK}) gives that the $E_0$-coefficient of $Z_K$ is exactly $\gamma+1$.

For any $i=1,\dots,d$ let us denote by $E_i$ the base element of the $i^{th}$ end--vertex $v_{i\nu_i}$ and compute the $E_i$-coefficient of $Z_K$. 
Using the identities $(E_i^*,E_j^*)=(e\alpha_i\alpha_j)^{-1}$ for $i\not=j$ and
$(E_i^*,E_i^*)=(e\alpha_i^2)^{-1}-\omega'_i/\alpha_i$ if $i=j$, 
 cf. \cite[(11.1)]{NOSZ}, 
 (\ref{ZK}) and a computation deduce that 
 \begin{equation}\label{eq:ZKend}
 -(Z_K,E^*_i)=1+(\gamma-\omega_i')/\alpha_i.
 \end{equation}
On the other hand, by \cite[Lemma 2.2.1]{LSz_module}  we know the  expression
$$E^*_{v_{ij}}=m_{ij}E^*_i-\sum_{j<r\leq \nu_i}m_{ijr}E_{v_{ir}},$$
where $m_{ij}$ and $m_{ijr}$ are positive integers. This yields that $-(Z_K,E^*_{v_{ij}})=M_{ij}(Z_K,-E^*_i)+M'_{ij}$ for some $M_{ij},M'_{ij}\in\mathbb{Z}$, which gives us the following observation.
\begin{lemma}\label{lem:ngor}
$\Gamma$ is numerically Gorenstein if and only if  $\gamma\in \mathbb{Z}$ and $\gamma\equiv\omega_i'\ ({\rm mod}\ \alpha_i)$ for all $i=1,\dots,d$.
\end{lemma}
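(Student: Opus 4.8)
The plan is to reduce numerical Gorensteinness to the integrality of the coordinates of $Z_K$ in the basis $\{E_v\}_{v\in\calv}$. Writing $Z_K=\sum_v c_v E_v\in L_{\mathbb{Q}}$ and pairing with $E_u^*$, the normalization $(E_u^*,E_v)_{\mathbb{Q}}=-\delta_{u,v}$ gives $c_u=-(Z_K,E_u^*)$. Hence $Z_K\in L$ if and only if $-(Z_K,E_v^*)\in\mathbb{Z}$ for every vertex $v\in\calv$, and the whole argument consists in extracting the arithmetic meaning of this integrality at each of the three types of vertices: the central vertex $v_0$, the $d$ end-vertices $v_{i\nu_i}$, and the internal leg vertices $v_{ij}$ with $j<\nu_i$. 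Since the statement is an equivalence, I would phrase every step as an ``if and only if'', so that both directions drop out simultaneously.

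First I would dispose of the central and end-vertices, whose coefficients are already available. The computation preceding the statement shows that the $E_0$-coefficient of $Z_K$ is $\gamma+1$, so $c_{v_0}\in\mathbb{Z}$ holds exactly when $\gamma\in\mathbb{Z}$. For an end-vertex, (\ref{eq:ZKend}) gives $c_{v_{i\nu_i}}=-(Z_K,E_i^*)=1+(\gamma-\omega_i')/\alpha_i$; as $\omega_i'\in\mathbb{Z}$, integrality of this number is equivalent to $\gamma\equiv\omega_i'\ (\mathrm{mod}\ \alpha_i)$, a congruence that already forces $\gamma\in\mathbb{Z}$. These two families of vertices therefore reproduce precisely the two conditions in the statement.

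It then remains to check that the internal leg vertices create no new obligations, and here the expression from \cite[Lemma 2.2.1]{LSz_module} does the work. Substituting $E^*_{v_{ij}}=m_{ij}E^*_i-\sum_{j<r\leq\nu_i}m_{ijr}E_{v_{ir}}$ into $-(Z_K,E^*_{v_{ij}})$ and using that each $(Z_K,E_{v_{ir}})=e_{v_{ir}}+2$ is an integer by adjunction, I obtain $c_{v_{ij}}=-(Z_K,E^*_{v_{ij}})=m_{ij}\bigl(-(Z_K,E_i^*)\bigr)+M'_{ij}$ with $m_{ij},M'_{ij}\in\mathbb{Z}$. Thus, once $-(Z_K,E_i^*)=c_{v_{i\nu_i}}\in\mathbb{Z}$ for every leg, all the internal coefficients are automatically integral. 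Assembling the three cases yields the equivalence in both directions.

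The hard part is precisely this last reduction: a priori integrality must be verified at all $\sum_i\nu_i+1$ vertices, and it is not evident that the single central condition together with the $d$ end-vertex congruences is sufficient. What makes it work is the triangular shape of the formula for $E^*_{v_{ij}}$, which expresses it through $E_i^*$ and the integral exceptional classes $E_{v_{ir}}$ and thereby propagates integrality from the tip of each leg inward. Once this structural input is granted, the proof is just the bookkeeping of the two scalar conditions isolated above.
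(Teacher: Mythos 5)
Your argument is correct and follows essentially the same route as the paper: the text preceding the lemma computes the $E_0$-coefficient of $Z_K$ as $\gamma+1$, records $-(Z_K,E_i^*)=1+(\gamma-\omega_i')/\alpha_i$ at the end-vertices, and uses the triangular expression $E^*_{v_{ij}}=m_{ij}E^*_i-\sum_{r}m_{ijr}E_{v_{ir}}$ from \cite[Lemma 2.2.1]{LSz_module} together with adjunction to see that the internal coefficients are integral once the end-vertex ones are. Your write-up just makes the vertex-by-vertex bookkeeping explicit; no gap.
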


% Hence, if $\Gamma$ is numerically Gorenstein (that is, $Z_K\in L$) then
% $\gamma\in \mathbb{Z}$ and $\gamma\equiv\omega_i'\ ({\rm mod}\ \alpha_i)$ for all $i$.

\begin{remark}
Note that $\gamma|e|=d-2-\sum_i 1/\alpha_i$ is negative if and only if $\pi_1(M)$ is finite, cf.  \cite{CR}. This
can happen only if $d=3$ and $\sum_i1/\alpha_i>1$, and in this case $(X,0)$ is a quotient singularity,
hence rational. If $(X,0)$ is not rational then $\gamma\geq 0$, that is, the $E_0$--coefficient of $Z_K$ is
$\geq 1$. In fact, in these cases all the coefficients of $Z_K$ are strict positive, see eg. \cite[3.2.5]{stronglyflat}. Moreover, in the numerically Gorenstein non-rational case --- when we already know that $\gamma\geq 0$ --- by the congruence from Lemma \ref{lem:ngor} we get the stronger $\gamma\geq 1$.
\end{remark}

%In the present work the focus will be on star-shaped dual resolution graphs. Their associated
 %plumbed $3$-manifolds are negative definite Seifert $3$-manifolds, and they can be analytically realized by weighted homogeneous surface singularities, see section  \ref{ss:WHS}. Therefore, in the sequel we will provide some details and set some notations regarding the arithmetics of such graphs.

\subsection{Representable numerical semigroups}\label{ss:reprsem}

\subsubsection{\bf Numerical Semigroups associated with weighted homogeneous surface singularities \cite{stronglyflat}}
We define the set $\calS_{(X,0)}:=\{\ell\in\mathbb{Z}_{\geq 0} | R_{X,\ell}\neq 0\}$. It is a numerical semigroup by the grading property and it is called the {\it numerical semigroup associated with $(X,0)$}.

By  the work of \cite{art_pink} the complex structure of $(X,0)$ is completely recovered by the Seifert invariants and the configuration of points $\{P_i:=E_0\cap E_{i1}\}_{i=1}^d \subset E_0$, where  $E_0$ is the  central curve and $E_{i1}$ is the component corresponding to $v_{i1}$ in $\Gamma$. Furthermore, the graded ring of the local algebra of the singularity is interpreted by the so-called {\em Dolgachev--Pinkham--Demazure} formula as 
\begin{equation}\label{eq:DPD}
R_X=\oplus_{\ell\geq 0} R_{X,\ell}=\oplus_{\ell\geq 0}
H^0(E_0,\mathcal{O}_{E_0}(D^{(\ell)})),
\end{equation}
with $D^{(\ell)}:=\ell (-E_0|_{E_0})-\sum_{i=1}^d\lceil\ell\omega_i/\alpha_i\rceil P_i$, where $\lceil r\rceil$ denotes the smallest integer greater or equal to $r$.

In particular, when $M$ is a rational homology sphere, ie. $E_0\simeq \mathbb{P}^1$, (\ref{eq:DPD}) implies that $\dim(R_{X,\ell})=\max\{0,1+N(\ell)\}$ is topological, where $N(\ell)$
is  the quasi-linear function 
\begin{equation}\label{defN}
N(\ell):=\deg D^{(\ell)}=b_0\ell-\sum_{i=1}^d\Big\lceil \frac{\ell \omega_i}{\alpha_i}\Big\rceil.
\end{equation}
Since $-\lceil x\rceil \leq -x$ one obtains $N(\ell)\leq |e|\ell$,
hence  $N(\ell)<0$ for $\ell<0$.
This means that the semigroup $\calS_{(X,0)}$ can be described purely with the Seifert invariants  as follows
\begin{equation}\label{eq:sgptop}
\mathcal{S}_{(X,0)}=\{\ell\in\mathbb{Z} \ | \ N(\ell)\geq 0\}.
\end{equation}
Since in this case $\calS_{(X,0)}$ is a topological invariant, we will frequently use the notations $\calS_{M}$, or $\calS_{\Gamma}$ as well. 

\begin{definition}
 We say that a numerical semigroup $\Se$ is {\it representable} if it can be realized as the semigroup associated with a weighted homogeneous
    surface singularity $(X,0)$ with rational homology sphere link. 
 Accordingly, we will say that $(X,0)$, or its link $M$, or the graph $\G$
 is a {\it representative} of the numerical semigroup $\Se$.
\end{definition}

Finally, we list some important properties of the quasi-linear function $N(\ell)$ which will be used later in our discussion.

\begin{prop}{\cite{art_nem_emb}, \cite[Prop. 3.2.11 \& 3.2.13]{stronglyflat}}\label{prop:INEQ} \

(a) \ $-(\alpha-1)|e|-d\leq N(\ell)-\lceil \ell/\alpha\rceil \alpha |e| \leq -1$. In particular
$\lim_{\ell\to\infty}N(\ell)=\infty$.

(b) \ If $\ell>\gamma$ then $h^1(E_0,\mathcal{O}_{E_0}(D^{(\ell)}))=0$, ie. $N(\ell)\geq -1$.

% (c) \ If $(X,o)$ is numerically Gorenstein (but not $ADE$) then $N(\gamma)=-2$ and  $\deg P_0^+(t)= \gamma$.

(c) \ $N(\alpha)=\alpha(b_0-\sum_i\omega_i/\alpha_i)=\alpha |e|=\mathfrak{o}>0$.

(d) \ $N(\ell+\alpha)=N(\ell)+N(\alpha)=N(\ell)+\mathfrak{o}>N(\ell)$ for any $ \ell\geq 0$.

(e) \ $N(\ell)\geq 0$ for any $\ell>\alpha+\gamma$.

(f) \ If the graph is numerically Gorenstein  (that is, $Z_K\in L$), then
\begin{equation}\label{SHSsym}
N(\ell)+N(\gamma-\ell)=-2 \ \ \mbox{for any} \ \ell\in\mathbb{Z}.
\end{equation}
\end{prop}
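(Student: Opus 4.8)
The plan is to establish each of the six statements, with parts (a)--(e) following from the quasi-linear structure of $N(\ell)$ and the key symmetry (f) coming from Serre duality via the Dolgachev--Pinkham--Demazure description.

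First I would prove (a), which is the engine for the rest. Writing $\ell = \lceil \ell/\alpha\rceil \alpha - r$ with $0 \le r < \alpha$, and using that $\alpha_i \mid \alpha$ so that $\omega_i/\alpha_i$ has period $\alpha$, I would compare $N(\ell)$ with $\lceil \ell/\alpha\rceil N(\alpha) = \lceil \ell/\alpha\rceil \alpha|e|$. The difference $N(\ell) - \lceil \ell/\alpha\rceil \alpha|e|$ equals a sum of terms of the form $-\lceil \ell\omega_i/\alpha_i\rceil + \lceil \ell/\alpha\rceil \alpha \omega_i/\alpha_i$, each of which I bound above and below using $x \le \lceil x\rceil < x+1$ together with the fact that $\lceil \ell/\alpha\rceil \alpha \omega_i/\alpha_i$ is an integer. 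The upper bound $\le -1$ comes because at least one ceiling is a strict increase, and the lower bound $\ge -(\alpha-1)|e| - d$ comes from the worst-case deficit over the $d$ legs plus the $b_0\ell$ term; since $N(\alpha) = \alpha|e| > 0$, the bracketing forces $N(\ell) \to \infty$. Part (c) is a direct substitution into $(\ref{defN})$ using $\alpha\omega_i/\alpha_i \in \Z$, giving $N(\alpha) = \alpha b_0 - \sum_i \alpha\omega_i/\alpha_i = \alpha|e| = \mathfrak{o}$ by $(\ref{eq:sei2})$; and (d) follows because the ceilings in $N(\ell+\alpha)$ split off an exact integer $\alpha\omega_i/\alpha_i$, yielding additivity $N(\ell+\alpha) = N(\ell) + N(\alpha)$.

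Next I would turn to (b) and (e). For (b) I would invoke the line bundle cohomology on $E_0 \simeq \P^1$: by $(\ref{eq:DPD})$ one has $h^1(E_0, \calO(D^{(\ell)})) = 0$ precisely when $\deg D^{(\ell)} = N(\ell) \ge -1$, and Serre duality on $\P^1$ translates the vanishing into the condition that $D^{(\ell)}$ avoid the canonical degree; the hypothesis $\ell > \gamma$ guarantees this through the interpretation of $\gamma+1$ as the $E_0$-coefficient of $Z_K$. Part (e) is then a corollary combining (a) and (b): for $\ell > \alpha + \gamma$ write $\ell = \ell_0 + k\alpha$ with $\gamma < \ell_0 \le \gamma + \alpha$ and $k \ge 1$; by (b) $N(\ell_0) \ge -1$, and by (d) applied $k$ times $N(\ell) = N(\ell_0) + k\mathfrak{o} \ge -1 + \mathfrak{o} \ge 0$ since $\mathfrak{o} \ge 1$.

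The heart of the matter is (f), the symmetry in the numerically Gorenstein case, and this is where I expect the main obstacle. The identity $N(\ell) + N(\gamma - \ell) = -2$ should reflect Serre duality relating $D^{(\ell)}$ to the canonical divisor twisted appropriately: $h^0(\calO(D^{(\ell)})) - h^1(\calO(D^{(\ell)})) = \deg D^{(\ell)} + 1 = N(\ell) + 1$ by Riemann--Roch on $\P^1$, and the dual bundle has degree $N(\gamma - \ell)$. The crux is verifying that $\deg D^{(\ell)} + \deg D^{(\gamma - \ell)} = -2 = \deg K_{\P^1}$, which reduces to the ceiling identity $\lceil \ell\omega_i/\alpha_i\rceil + \lceil (\gamma-\ell)\omega_i/\alpha_i\rceil = \gamma\omega_i/\alpha_i + 1$ summed against the Gorenstein congruences $\gamma \equiv \omega_i' \pmod{\alpha_i}$ from Lemma \ref{lem:ngor}. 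The delicate point is controlling when the two ceilings are integers simultaneously versus when a correction term appears; the congruence condition $\gamma\omega_i \equiv \omega_i\omega_i' \equiv 1 \pmod{\alpha_i}$ ensures $\gamma\omega_i/\alpha_i$ is never an integer for $\alpha_i > 1$, so the ceiling pair always contributes exactly one unit, and assembling these across all legs together with the $b_0\gamma$ term yields the required $-2$. Handling the boundary cases where $\alpha_i = 1$ and confirming the arithmetic of the $\gamma$-term is the genuinely technical step.
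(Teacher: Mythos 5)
The paper itself does not prove Proposition \ref{prop:INEQ}; it is quoted from \cite{art_nem_emb} and \cite{stronglyflat}, so your argument can only be judged on its own terms. Parts (c) and (d) are correct, and your deduction of (e) from (b) and (d) works. Part (b), however, is only gestured at: on $E_0\simeq\P^1$ the equivalence $h^1=0\iff \deg D^{(\ell)}\geq -1$ is immediate, so the entire content is the inequality $N(\ell)\geq -1$ for $\ell>\gamma$, and invoking Serre duality there is circular; what is actually needed is the estimate $\lceil \ell\omega_i/\alpha_i\rceil\leq \ell\omega_i/\alpha_i+1-1/\alpha_i$, which gives $N(\ell)\geq \ell|e|-(d-\sum_i 1/\alpha_i)=(\ell-\gamma)|e|-2>-2$, hence $\geq -1$ by integrality. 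In (a), your justification of the upper bound (``at least one ceiling is a strict increase'') fails when $\alpha\mid\ell$: setting $r=\lceil\ell/\alpha\rceil\alpha-\ell$ one computes $N(\ell)-\lceil\ell/\alpha\rceil\alpha|e|=-b_0r+\sum_i\lfloor r\omega_i/\alpha_i\rfloor$, which equals $0$ for $r=0$ (indeed part (c) itself gives $N(\alpha)-\lceil\alpha/\alpha\rceil\alpha|e|=0$); the bound $\leq -1$ only holds for $r\geq 1$, i.e.\ with the normalization $\lceil(\ell+1)/\alpha\rceil$ used in the cited source.

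The genuine error is in (f). Your proposed per-leg identity $\lceil\ell\omega_i/\alpha_i\rceil+\lceil(\gamma-\ell)\omega_i/\alpha_i\rceil=\gamma\omega_i/\alpha_i+1$ cannot hold: by the Gorenstein congruence $\gamma\omega_i\equiv 1\ (\mathrm{mod}\ \alpha_i)$ and $\alpha_i\geq 2$, the right-hand side is not an integer while the left-hand side is. Worse, summing your identity over the legs yields $N(\ell)+N(\gamma-\ell)=b_0\gamma-\gamma\sum_i\omega_i/\alpha_i-d=\gamma|e|-d=-2-\sum_i 1/\alpha_i$, not $-2$. The correct identity is $\lceil\ell\omega_i/\alpha_i\rceil+\lceil(\gamma-\ell)\omega_i/\alpha_i\rceil=(\gamma\omega_i-1)/\alpha_i+1$: writing $m:=(\gamma\omega_i-1)/\alpha_i\in\Z$ and $\ell\omega_i=q\alpha_i+s$ with $0\leq s<\alpha_i$, one has $(\gamma-\ell)\omega_i=(m-q)\alpha_i+(1-s)$, and in each of the cases $s=0$, $s=1$, $s\geq 2$ the two ceilings sum to $m+1$. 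Substituting this corrected identity and using $\gamma|e|=d-2-\sum_i 1/\alpha_i$ then gives exactly $-2$. So your strategy for (f) is the right one and uses the right congruence, but the identity you reduce to is false and the bookkeeping of the $\sum_i 1/\alpha_i$ term must be redone. (Also, every leg has $\alpha_i\geq 2$ in this setup, so the ``$\alpha_i=1$'' boundary case you flag does not occur.)
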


\subsubsection{\bf The Frobenius number of representable semigroups}

Let $\Gamma$ be as in section \ref{ss:seifert}. If $\Gamma$ satisfies $b_0\geq d$ then a corresponding weighted homogeneous singularity $(X,0)$ supported on this topological type is minimal rational. Moreover, in this case $\calS_{\Gamma}=\mathbb{N}$. Otherwise, in the non-trivial cases the Frobenius number of $\calS_{\Gamma}$ is expressed by the following result.

\begin{theorem}{\cite{stronglyflat}}
 If $b_0<d$ then one has
 \begin{equation}\label{eq:frob}
 f_{\calS_{\Gamma}}=\gamma + \frac{1}{|e|}-\check{s},
 \end{equation}
 where $\check{s}$ is the $E_0$-coefficient of 
 %$s_{[Z_K+E_0^*]}$,  which is 
 the unique minimal element of the Lipman cone  $\mathcal{S}'_{[Z_K+E_0^*]}:=\{\ell'\in L' \ | \ (\ell',E_v)\leq 0 \ \mbox{for all} \ v\in \V \ \mbox{and} \ [l']=[Z_K+E_0^*]\}$, given by the generalized Laufer's algorithm, see \cite[3.1.2]{stronglyflat}.
\end{theorem}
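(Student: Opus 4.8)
My plan is to first reduce the statement to a vanishing question for the graded ring, then convert it by a Serre-type duality on the resolution into a minimality problem in the Lipman cone, and finally extract the $E_0$-coefficient. Since $\calS_\Gamma=\{\ell\in\Z\mid N(\ell)\ge 0\}$ by \eqref{eq:sgptop}, the Frobenius number is
\[
f_{\calS_\Gamma}=\max\{\ell\in\Z \mid N(\ell)\le -1\},
\]
and this maximum is finite: by Proposition \ref{prop:INEQ}(e) no gap exceeds $\alpha+\gamma$, while (b) shows that for $\ell>\gamma$ a gap can only occur with $N(\ell)=-1$. I would first recast this arithmetic condition geometrically. By the Dolgachev--Pinkham--Demazure description \eqref{eq:DPD} one has $\dim R_{X,\ell}=h^0(E_0,\mathcal{O}_{E_0}(D^{(\ell)}))=\max\{0,N(\ell)+1\}$, so $\ell$ is a gap precisely when, lifting to the resolution, the natural line bundle $\mathcal{O}_{\tX}(-\ell E_0^*)$ has no weight-$\ell$ sections. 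Thus $f_{\calS_\Gamma}$ is the top weight in which the graded ring $R_X$ vanishes.

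The heart of the argument would be a Serre-type duality on $\tX$, encoded by the anti-canonical cycle $Z_K$ (defined by $(Z_K,E_v)=E_v^2+2$). On the central curve $E_0\simeq\P^1$, Serre duality gives $h^1(\mathcal{O}_{E_0}(D^{(\ell)}))=h^0(\mathcal{O}_{E_0}(K_{E_0}-D^{(\ell)}))$, and adjunction $K_{E_0}=(K_{\tX}+E_0)|_{E_0}=(-Z_K+E_0)|_{E_0}$ rewrites the dual divisor in terms of $Z_K$ and $E_0$. Carrying this through, the vanishing $R_{X,\ell}=0$ at the largest weight becomes the existence of a distinguished effective cycle attached to the fixed class $[Z_K+E_0^*]\in H$; the extra summand $E_0^*$ is exactly the twist by $\mathcal{O}(-E_0^*)$ produced by the central curve in the adjunction. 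I would then show that, among all such witnesses, the extremal weight is realized by the \emph{minimal} element $\check{\ell}$ of the Lipman cone in the class $[Z_K+E_0^*]$, produced by the generalized Laufer algorithm of \cite[3.1.2]{stronglyflat}: minimality of $\check{\ell}$ corresponds to maximality of the gap.

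It then remains to read off $E_0$-coefficients. That of $Z_K$ equals $\gamma+1$ and that of $E_0^*$ equals $1/|e|$ (both computed after \eqref{eq:gamma}), so the $E_0$-coefficient of $Z_K+E_0^*$ is $\gamma+1+1/|e|$. Writing $\check{\ell}=Z_K+E_0^*-x$ with $x\in L$ the integral defect produced by Laufer's reduction, the identification of the extremal weight with the $E_0$-coefficient of $x$, after accounting for the normalization $h^0-h^1=\deg+1$ on $\P^1$, yields
\[
f_{\calS_\Gamma}=(\gamma+1+1/|e|)-1-\check{s}=\gamma+\frac{1}{|e|}-\check{s},
\]
where $\check{s}$ is the $E_0$-coefficient of $\check{\ell}$. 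As a consistency check, in the numerically Gorenstein case Lemma \ref{lem:ngor} gives $[Z_K+E_0^*]=[E_0^*]$, and the symmetry $N(\ell)+N(\gamma-\ell)=-2$ of Proposition \ref{prop:INEQ}(f) can be matched directly against the formula.

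The main obstacle is the middle step: making the resolution-level duality precise in the \emph{non}-Gorenstein case, where $Z_K\notin L$, and proving rigorously that the largest section-free weight is computed by the $E_0$-coefficient of the Laufer-minimal cycle in $[Z_K+E_0^*]$ rather than by some larger anti-nef representative. Concretely, one must show that Laufer's reduction steps are compatible with the weight filtration, so that lowering the cycle to its minimal anti-nef representative corresponds to climbing through the weights $\ell$ with $N(\ell)\le -1$ up to the last one. This is where the quasi-periodicity $N(\ell+\alpha)=N(\ell)+\mathfrak{o}$ of Proposition \ref{prop:INEQ}(d), together with the bounds in (a), would be used to control the tail of $N$ and confirm that the reduction terminates exactly at the top gap.
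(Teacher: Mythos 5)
First, a point of comparison: the paper does not prove this theorem at all --- it is imported verbatim from \cite{stronglyflat} --- so there is no in-paper argument to measure yours against, and your sketch has to stand on its own. Judged that way, your setup is correct ($f_{\calS_\Gamma}=\max\{\ell:N(\ell)\le -1\}$, finiteness via Proposition \ref{prop:INEQ}(e), the coefficients $\gamma+1$ for $Z_K$ and $1/|e|$ for $E_0^*$, and the final arithmetic all check out), but the entire content of the theorem sits in the step you yourself flag as ``the main obstacle'': the identification of the largest gap with the $E_0$-coefficient of $Z_K+E_0^*-\check{\ell}$. Saying that ``minimality of $\check{\ell}$ corresponds to maximality of the gap'' is a restatement of the theorem, not an argument for it.

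Moreover, the specific bridge you propose for that step cannot work as written. Serre duality on $E_0\simeq\P^1$ merely re-expresses $h^1(\mathcal{O}_{E_0}(D^{(\ell)}))=\max\{0,-N(\ell)-1\}$, which you already have from $h^0-h^1=N(\ell)+1$; it produces a nonzero ``dual section'' only when $N(\ell)\le -2$. But by Proposition \ref{prop:INEQ}(b) every gap $\ell>\gamma$ satisfies $N(\ell)=-1$, i.e.\ $h^0=h^1=0$, so on the whole range $\gamma<\ell\le\gamma+1/|e|-\check{s}$ --- which contains the Frobenius number whenever $\check{s}<1/|e|$, for instance in the strongly flat case where $f=\alpha+\gamma$ and $N(f)=-1$ --- there is no effective divisor in $|K_{E_0}-D^{(\ell)}|$ to witness the gap. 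The curve-level duality therefore sees nothing exactly where the theorem has content. What is actually needed is the surface-level comparison in $L'$ between the cycles attached to the weights $\ell$ and the minimal anti-nef representative of the class $[Z_K+E_0^*]$ produced by the generalized Laufer algorithm (and in the non-Gorenstein case there is no clean duality to shortcut this); that machinery, which is the substance of \cite[\S 3]{stronglyflat}, is neither reproduced nor replaced in your sketch. As it stands the proposal is a correct reduction plus an unproved core, with the proposed mechanism for the core demonstrably insufficient.
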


If $\Gamma$ is numerically Gorenstein (ie. $Z_K\in L$) and $\mathfrak{o}=1$ then in (\ref{eq:frob}) the `algorithmic term' $\check{s}$ vanishes and the corresponding semigroup is symmetric as it is clarified by the next proposition.

\begin{proposition} \label{thm_ngor0_sym_and_frob}
    Let $\Gamma$ be a numerically Gorenstein graph which 
    satisfies $\oo=1$. Then 
     $\calS_{\Gamma}$ is symmetric.  
    Moreover, 
    the Frobenius number of $\calS_{\Gamma}$ simplifies to  
    \begin{equation}\label{eq:frobNGoro1}
    f_{\calS_{\Gamma}}=\al+\gamma.
    \end{equation}
\end{proposition}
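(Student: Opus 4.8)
The plan is to derive both assertions directly from the quasi-linear function $N$, exploiting its two structural features: a global quasi-periodicity and the numerically Gorenstein symmetry. Recall that $\calS_{\Gamma}=\{\ell\in\Z\,|\,N(\ell)\ge 0\}$ by \eqref{eq:sgptop}, that $N(\ell)<0$ for $\ell<0$, and that Proposition \ref{prop:INEQ}(f) gives $N(\ell)+N(\gamma-\ell)=-2$ for all $\ell\in\Z$ (with $\gamma\in\Z$ by Lemma \ref{lem:ngor}, since $\Gamma$ is numerically Gorenstein). The key preliminary observation I would record is that the quasi-periodicity $N(\ell+\alpha)=N(\ell)+\oo$ of Proposition \ref{prop:INEQ}(d) in fact holds for \emph{every} $\ell\in\Z$, not just for $\ell\ge 0$: since each $\alpha_i$ divides $\alpha=\mathrm{lcm}(\alpha_1,\dots,\alpha_d)$, the quantity $\alpha\omega_i/\alpha_i$ is an integer, so that $\lceil(\ell+\alpha)\omega_i/\alpha_i\rceil=\lceil\ell\omega_i/\alpha_i\rceil+\alpha\omega_i/\alpha_i$, and a one-line computation yields $N(\ell+\alpha)=N(\ell)+\alpha|e|=N(\ell)+\oo$. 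Using $\oo=1$ this reads $N(\ell+\alpha)=N(\ell)+1$ for all $\ell\in\Z$.

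Combining the two features is then immediate and is the heart of the argument. For an arbitrary $x\in\Z$ I would write $\gamma+\alpha-x=(\gamma-x)+\alpha$ and compute
\begin{equation*}
N(\gamma+\alpha-x)=N(\gamma-x)+\oo=\big(-2-N(x)\big)+1=-1-N(x),
\end{equation*}
where the first equality is the global quasi-periodicity and the second is Proposition \ref{prop:INEQ}(f). Consequently $N(x)\ge 0$ holds if and only if $N(\gamma+\alpha-x)=-1-N(x)\le -1<0$; that is,
\begin{equation*}
x\in\calS_{\Gamma}\iff \gamma+\alpha-x\notin\calS_{\Gamma}\qquad\text{for every }x\in\Z .
\end{equation*}
This is precisely the statement that exactly one of $x$ and $(\gamma+\alpha)-x$ lies in $\calS_{\Gamma}$, i.e.\ that $\calS_{\Gamma}$ is symmetric with symmetry point $\gamma+\alpha$.

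It remains to confirm that $\gamma+\alpha$ is indeed the Frobenius number, which I would read off from the same biconditional. Taking $x=0$ and using $0\in\calS_{\Gamma}$ (as $N(0)=0$) shows $\gamma+\alpha\notin\calS_{\Gamma}$, so $\gamma+\alpha$ is a gap; and for any integer $m>\gamma+\alpha$ the index $x=\gamma+\alpha-m$ is negative, hence $x\notin\calS_{\Gamma}$, forcing $m=\gamma+\alpha-x\in\calS_{\Gamma}$. Thus $\gamma+\alpha$ is the largest gap and $f_{\calS_{\Gamma}}=\alpha+\gamma$. Alternatively, one can reach the same value through \eqref{eq:frob}: since $Z_K\in L$ and $\oo=1$ forces $[E^*_0]=0$, the class $[Z_K+E^*_0]$ is trivial, the minimal element of the associated Lipman cone is $0$, its $E_0$-coefficient $\check{s}$ vanishes, and \eqref{eq:frob} collapses to $\gamma+1/|e|=\gamma+\alpha$ (using $1/|e|=\alpha$, which is equivalent to $\oo=\alpha|e|=1$).

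The computations above are all routine; the single point that requires care — and the only place where something must be checked beyond quoting Proposition \ref{prop:INEQ} — is the extension of the quasi-periodicity to negative arguments, since the reflected index $\gamma-x$ appearing in the symmetry relation is negative exactly when $x>\gamma$. Once that extension is in hand (it costs only the divisibility $\alpha_i\mid\alpha$), both the symmetry and the Frobenius value follow with no case analysis.
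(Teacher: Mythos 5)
Your proof is correct and follows essentially the same route as the paper: both combine the numerically Gorenstein relation $N(\ell)+N(\gamma-\ell)=-2$ with the quasi-periodicity $N(\ell+\alpha)=N(\ell)+\oo=N(\ell)+1$ to obtain $N(\ell)+N(\alpha+\gamma-\ell)=-1$ and conclude by integrality of $N$. Your two refinements --- checking that the quasi-periodicity holds for all $\ell\in\Z$ (the paper applies Proposition \ref{prop:INEQ}(d) at $\gamma-\ell$, which can be negative, without comment) and reading the value $f_{\calS_{\Gamma}}=\alpha+\gamma$ off the biconditional itself rather than quoting \eqref{eq:frob} with $\check{s}=0$ --- are welcome but do not change the substance of the argument.
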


\begin{proof}
The assumptions imply $1/|e|=\alpha$ and $\check{s}=0$, hence (\ref{eq:frob}) immediately gives the simplified form of $f_{\calS_{\Gamma}}$, cf. \cite[Corollary 3.2.12 or Example 6.2.4(1)]{stronglyflat}.
\vspace{0.1cm}

The proof of the symmetry goes analogously as in the case of strongly flat semigroups presented in \cite[4.1.1]{stronglyflat}. For the sake of completeness we will clarify the details here as well.

One needs to verify  that $\ell\in \calS_\Gamma$ if and only if $f_{\calS_\Gamma}-\ell\not\in\calS_\Gamma$ for
    every $\ell\in\Z.$ Using the quasi-linear function $N(\ell)$ and the expression (\ref{eq:frobNGoro1}) of the Frobenius number, this reads as 
\begin{equation}\label{eq:sym}
    N(\ell)\ge 0 \ \ \mbox{ if and only if } \ \ N(\al+\gamma-\ell)<0 
    \mbox{ for every } \ell\in\Z.     
\end{equation}
   
 Since $\G$ is numerically Gorenstein,  by Proposition \ref{prop:INEQ}(f) we have  $N(\ell)+N(\gamma-\ell)=-2$. On the other hand, part (d) of the same proposition deduces that $N(\al+\gamma-\ell)=N(\gamma-\ell)+\oo=N(\gamma-\ell)+1$,  hence  $N(\ell)+N(\al+\gamma-\ell)=-1$. Since $N(\ell)$ and $N(\al+\gamma-\ell)$ are integers one gets   (\ref{eq:sym}).
\end{proof}

\subsubsection{\bf Representatives of strongly flat semigroups}

Let  $M=\Sigma(\alpha_1,\dots,\alpha_d)$ be a Seifert integral homology sphere. Thus, $\alpha_1,\ldots,\alpha_d \geq 2$ ($d\geq 3$) are pairwise relatively prime integers and both $b_0$ and $(\omega_1,\ldots,\omega_d)$ are uniquely determined by the Diophantine equation $\alpha(b_0-\sum_{i=1}^d\omega_i/\alpha_i)=1$. If we consider the integers  $a_i:=\alpha/\alpha_i$
then the greatest common divisor of $a_1, \ldots, a_{i-1}, a_{i+1},\ldots, a_d$
 is $\alpha_i$, hence the system $\{a_i\}_{i=1}^d$ generates a strongly flat semigroup
 $G(a_1, \ldots, a_d)$. In fact, in this case $\mathcal{S}_M=G(a_1, \ldots, a_d)$.

\begin{thm}{\cite{stronglyflat}}\label{thm:Sgen}
The strongly flat semigroups with at least three generators are representable. They can be represented by the Seifert integral homology spheres.
\end{thm}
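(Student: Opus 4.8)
The plan is to establish the substantive direction: every strongly flat semigroup with $d\ge 3$ generators occurs as $\calS_\Sigma$ for a suitable Seifert integral homology sphere $\Sigma$ (the converse, that every such $\Sigma$ yields a strongly flat semigroup, was recorded in the paragraph preceding the statement). So I let $\calS$ be strongly flat with minimal generating system $a_1,\dots,a_d$, $d\ge 3$, written in the normalized form $a_i=\prod_{j\ne i}q_j$ of Definition \ref{df:flatnes}, where $q_i=\gcd(a_1,\dots,a_{i-1},a_{i+1},\dots,a_d)$ and the $q_i$ are pairwise coprime by Section \ref{ss:classsgp}.

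The construction is to set $\alpha_i:=q_i$. By pairwise coprimality it remains only to check $\alpha_i\ge 2$, and I argue this from minimality. If $q_k=1$ for some $k$, pick any $i\ne k$ (possible since $d\ge 3$); then $a_i=\prod_{j\ne i,k}q_j$ and $a_k=q_i\prod_{j\ne i,k}q_j=q_i\,a_i$, so $a_k=a_i$ (if $q_i=1$) or $a_k$ is a proper multiple of $a_i$ (if $q_i\ge 2$), either way contradicting the minimality of $\{a_1,\dots,a_d\}$. Hence $\alpha_1,\dots,\alpha_d\ge 2$ are pairwise coprime with $d\ge 3$, and with $\alpha=\prod_i\alpha_i=\mathrm{lcm}(\alpha_1,\dots,\alpha_d)$ the Diophantine equation $\alpha(b_0-\sum_i\omega_i/\alpha_i)=1$ fixes $b_0$ and the $\omega_i$ uniquely, yielding a negative definite ($e=-1/\alpha<0$) Seifert integral homology sphere $\Sigma=\Sigma(\alpha_1,\dots,\alpha_d)$ whose associated generators are precisely $\alpha/\alpha_i=\prod_{j\ne i}q_j=a_i$.

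It then remains to prove $\calS_\Sigma=G(a_1,\dots,a_d)=\calS$, which is the real content (it is exactly the fact $\calS_M=G(a_1,\dots,a_d)$ recorded above, whose proof I now sketch). Since $H_1(M,\Z)=L'/L=0$ we have $Z_K\in L'=L$, so $\Sigma$ is numerically Gorenstein, and $\mathfrak{o}=\alpha|e|=1$ by (\ref{eq:sei2}); thus Proposition \ref{thm_ngor0_sym_and_frob} shows $\calS_\Sigma$ is symmetric with Frobenius number $\alpha+\gamma$. For the inclusion $G\subseteq\calS_\Sigma$ I check $a_i\in\calS_\Sigma$: for $j\ne i$ one has $\alpha_j\mid a_i$, so the ceilings in (\ref{defN}) are exact, while for $j=i$ coprimality gives $\lceil a_i\omega_i/\alpha_i\rceil=a_i\omega_i/\alpha_i+\delta_i$ with $\delta_i\in(0,1)$; using $b_0-\sum_j\omega_j/\alpha_j=|e|=1/\alpha$, the expression (\ref{defN}) collapses to $N(a_i)=a_i|e|-\delta_i=1/\alpha_i-\delta_i$, the unique integer in $(-1,1)$, namely $0$. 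Closure of $\calS_\Sigma$ under addition then gives $G\subseteq\calS_\Sigma$. Finally the strongly flat Frobenius formula of Section \ref{ss:classsgp} gives $f_\calS=\alpha(d-1-\sum_i1/\alpha_i)$, which equals $\alpha+\gamma$ because $\gamma=\alpha(d-2)-\sum_i a_i$ by (\ref{eq:gamma}).

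The main obstacle is upgrading the inclusion $\calS\subseteq\calS_\Sigma$ to equality: the inclusion and the verification of the Seifert data are routine, but equality needs the elementary fact that two symmetric numerical semigroups, one contained in the other with the same Frobenius number $f$, must coincide. Indeed, if $x\in\calS_\Sigma\setminus\calS$ with $0<x<f$, then symmetry of $\calS$ gives $f-x\in\calS\subseteq\calS_\Sigma$, forcing $f=x+(f-x)\in\calS_\Sigma$ and contradicting that $f$ is the Frobenius number of $\calS_\Sigma$. This reduces everything to the two symmetries (that of $\calS_\Sigma$ from Proposition \ref{thm_ngor0_sym_and_frob}, and that of the strongly flat $\calS$ from \cite{art_rch}) together with the bookkeeping identity $f_\calS=\alpha+\gamma$.
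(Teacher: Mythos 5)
Your construction of the representative $\Sigma(\alpha_1,\dots,\alpha_d)$ with $\alpha_i:=q_i$, the minimality argument forcing $q_i\ge 2$, and the computation $N(a_i)=0$ giving $G(a_1,\dots,a_d)\subseteq\calS_\Sigma$ are all correct and match the set-up recorded before the theorem. Where you genuinely diverge is in the reverse inclusion $\calS_\Sigma\subseteq G(a_1,\dots,a_d)$: the paper (and \cite{stronglyflat}, from which the theorem is quoted) obtains it arithmetically, via the identity (\ref{eq:l_as_N}), $\mathfrak{o}\ell=\alpha N(\ell)+\sum_i a_i\cdot\alpha_i\bigl(\lceil \omega_i\ell/\alpha_i\rceil-\omega_i\ell/\alpha_i\bigr)$, which for $\mathfrak{o}=1$ exhibits any $\ell$ with $N(\ell)\ge 0$ as a nonnegative combination of $\alpha$ and the $a_i$ --- this is exactly the upper bound of Theorem \ref{thm:numbound} specialized to $s_i=1$. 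You instead close the gap by comparing Frobenius numbers and invoking the symmetry of the abstract strongly flat semigroup $\calS$. That argument is sound (your ``nested semigroups, equal Frobenius number, inner one symmetric'' lemma is correct, and it genuinely needs symmetry of the \emph{inner} semigroup, which you do use), but it imports a fact whose provenance you should be careful about: within this paper the symmetry of (strongly) flat semigroups is \emph{deduced} from representability (Proposition \ref{lm_flat_ngor} plus Proposition \ref{thm_ngor0_sym_and_frob}), so citing it here would be circular if that were the source; and \cite{art_rch} establishes the Frobenius formula $f=B$ but, as far as the present paper records, not symmetry. The fact you need is true and classical --- strongly flat systems are free/telescopic, hence complete intersection semigroups, hence symmetric --- so your proof stands, but you should cite that independent source explicitly. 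The trade-off: the paper's route via (\ref{eq:l_as_N}) is self-contained and generalizes verbatim to the flat case with multiplicities $s_i$, whereas your route is shorter given symmetry but does not extend without first re-establishing symmetry of flat semigroups by other means.
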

In particular, the theorem implies that strongly flat semigroups (with $d\geq 3$) are semigroups associated with numerically Gorenstein graphs with $\oo=1$. Consequently, they are symmetric and their Frobenius number is expressed by the formula $ f_{G(a_1,\ldots,a_n)}=\alpha+\gamma$. Its identification  with the bound  $B(a_1,\dots, a_d)$ can be seen using  (\ref{eq:B}), (\ref{eq:sei2}) and by noticing that in this case ${\rm lcm}(a_1,\ldots, a_d)=\alpha$.

Since the representative of a semigroup is not unique (see Remark \ref{rem:toprel}), we say that the Seifert integral homology sphere is the {\it canonical representative} of the corresponding strongly flat semigroup.

\section{Further observations on representable semigroups}\label{s:rpsg}

Let $\Gamma$ be a star-shaped minimal good dual resolution graph  of a normal weighted homogeneous singularity link is a rational homology sphere (cf. \ref{ss:seifert}). For simplicity, in the sequel we refer to such graphs as {\it SSR graphs}.

We consider the  representable semigroup $S_{\Gamma}$ defined by the quasi-linear function $N(\ell)$. In this section we make some observations which, on one hand, clarify the representability of the semigroups $G(p,q)$, on the other hand, give some bounds for representable semigroups which will be crucial in the forthcoming sections.

\subsection{The sum of SSR graphs}\label{sec:op}
% Let $\G$ be the star-shaped minimal good resolution graph
% of a weighted homogeneous singularity with rational homology
% sphere link.
% 
% In this section, we will discuss some new operations on such graphs
% in order to characterize some properties of numerical semigroups
% associated with  weighted homogeneous singularities.
% We recall that with our assumptions the semigroup is 
% topological, hence it is determined completely by 
% the graph via the quasi-linear polynomial $N$.

Let $\G_1$ and $\G_2$ be two SSR graphs with Seifert invariants
\[
    Sf_1=\left(-b_0, (\al_i,\w_i )_{i=1}^m \right)
    \quad\text{and}\quad
    Sf_2=\left(-c_0, (\beta_j,w_j )_{j=1}^n \right).
\]
We define the sum $\G:=\G_1+\G_2$ of these two graphs as the SSR graph determined by the
Seifert invariants
\[
    Sf=\left(-b_0-c_0, (\al_i,\w_i )_{i=1}^m,(\beta_j,w_j )_{j=1}^n  \right).
\]
Note that if $e_1,e_2$ and $e$ are the orbifold Euler
numbers of $\G_1,\G_2$ and $\G$ respectively,
then $e=e_1+e_2<0$. Hence the sum is well defined.

Now, we study this sum from the perspective of the associated numerical semigroups. 
Denote the quasi-linear functions associated with the 
graphs $\G_{1},\G_2$ and $\G$ by $N_{1},N_2$ and $N$. Then the 
above construction yields $N(\ell)=N_1(\ell)+N_2(\ell)$. This provides an upper and a lower 
bound for  $\Se_{\G}$ 
in terms of $\Se_{\G_1}$ and $\Se_{\G_2}$, 
given by the next lemma. Recall that  for two sets $A,B\subset \mathbb{N}$ we denote
their sum by $A+B:=\{a+b:a\in A,\,b\in B\}$. In particular, if $A,B$ are numerical semigroups then $A+B$ is so.

\begin{lemma}
    \label{thm:sumbound}
    If $\G_1$ and $\G_2$ are SSR graphs of then
    \begin{equation}
        \label{eq:sumbound}
        \Se_{\G_1}\cap\Se_{\G_2}\subset 
        \Se_{\G_1+\G_2}
        \subset\Se_{\G_1}+\Se_{\G_2}.        
    \end{equation} 
\end{lemma}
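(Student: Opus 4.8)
The plan is to reduce everything to the pointwise additivity $N(\ell)=N_1(\ell)+N_2(\ell)$ recorded just above the lemma, together with the defining description $\Se_{\G}=\{\ell\in\Z : N(\ell)\ge 0\}$ from (\ref{eq:sgptop}), and the elementary remark that $0$ belongs to every $\Se_{\G_i}$ since $N_i(0)=0$ (indeed $N_i(0)=b_0^{(i)}\cdot 0-\sum_j\lceil 0\rceil=0$). Both containments then become formal consequences of these facts.

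For the left inclusion I would take $\ell\in\Se_{\G_1}\cap\Se_{\G_2}$, so that $N_1(\ell)\ge 0$ and $N_2(\ell)\ge 0$; adding these gives $N(\ell)=N_1(\ell)+N_2(\ell)\ge 0$, i.e. $\ell\in\Se_{\G_1+\G_2}$. This direction needs nothing beyond additivity and the fact that a sum of two nonnegative integers is nonnegative.

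For the right inclusion I would take $\ell\in\Se_{\G_1+\G_2}$, so that $N_1(\ell)+N_2(\ell)=N(\ell)\ge 0$. The key observation is that a sum of two integers cannot be nonnegative unless at least one summand is nonnegative; hence $N_1(\ell)\ge 0$ or $N_2(\ell)\ge 0$. Assuming without loss of generality the former, we obtain $\ell\in\Se_{\G_1}$, and writing $\ell=\ell+0$ with $0\in\Se_{\G_2}$ exhibits $\ell$ as an element of $\Se_{\G_1}+\Se_{\G_2}$. In fact this argument proves the slightly stronger chain $\Se_{\G_1+\G_2}\subseteq\Se_{\G_1}\cup\Se_{\G_2}\subseteq\Se_{\G_1}+\Se_{\G_2}$, from which (\ref{eq:sumbound}) follows at once.

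There is no genuine obstacle here: once the additivity $N=N_1+N_2$ is in hand, both inclusions are immediate. The only points deserving care are the use of $0\in\Se_{\G_i}$ in the decomposition step of the upper bound, which is precisely what lets a single nonnegative summand suffice, and implicitly the fact that $\G_1+\G_2$ is again a well-defined SSR graph, already guaranteed by $e=e_1+e_2<0$ as noted before the lemma.
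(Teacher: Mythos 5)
Your proof is correct and follows essentially the same route as the paper: both inclusions are reduced to the additivity $N=N_1+N_2$, with the left one coming from adding two nonnegative values and the right one from the observation that a nonnegative sum forces at least one nonnegative summand, combined with $0\in\Se_{\G_i}$. The paper phrases the second step via the identity $\Se_{\G_1}+\Se_{\G_2}=\langle\Se_{\G_1}\cup\Se_{\G_2}\rangle$, but your decomposition $\ell=\ell+0$ is the same argument made explicit.
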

\begin{proof}
    Since the quasi-linear function 
    associated with $\G_1+\G_2$ is $N_1+N_2$, one shows that 
    $$
    \Se_{\G_1+\G_2}
    =\{\ell\in\mathbb{N}:N_1(\ell)+N_2(\ell)\ge 0\}\supset \{\ell\in\mathbb{N}:N_1(\ell)\ge 0\mbox{ and }
    N_2(\ell)\ge 0\}=  \Se_{\G_1}\cap\Se_{\G_2}.
    $$
%     Using this identity, we deduce that
%    \begin{align}
%     \Se_{\G_1}\cap\Se_{\G_2}&=\{\ell\in\mathbb{N}:N_1(\ell)\ge 0\mbox{ and }
%     N_2(\ell)\ge 0\}\\
%     &\subset
%     {\ell\in\mathbb{N}:N_1(\ell)+N_2(\ell)\ge 0}=\Se_{\G_1+\G_2}.       
%    \end{align} 
    Furthermore, we can write  
    $$\Se_{\G_1}+\Se_{\G_2}=\langle \calS_{\G_1}\cup \calS_{\G_2}\rangle=\langle \h{\ell\in\mathbb{N} :
        N_1(\ell) \ge 0 \text{ or } N_2 (\ell)\ge 0} \rangle,$$
     which clearly implies the inclusion $\Se_{\G_1+\G_2}\subset \Se_{\G_1}+\Se_{\G_2}$.   
%     \begin{align*}    
%         \Se_{\G_1+\G_2}&=
%         \h{\ell\in\mathbb{N}:N_1(\ell)+N_2(\ell)\ge 0}\\
%         &\subset
%         \h{\ell\in\mathbb{N} :
%         N_1(\ell) \ge 0 \text{ or } N_2 (\ell)\ge 0}=\Se_{\G_1}+\Se_{\G_2}.
%     \end{align*}
   \end{proof}

\subsubsection{}\label{sss:multip} In particular, when $\Se_{\G_1}=\Se_{\G_2}$, we get identity
in (\ref{eq:sumbound}) implying  
$\Se_{2\G_1}:=\Se_{\G_1+\G_1}=\Se_{\G_1}$.
More generally, for an arbitrary  SSR graph $\G$ and 
$k\in \mathbb{N}^*$, we get $\Se_{k\G}=\Se_{\G}$, where 
\[
k\G:=\underset{k-\text{times}}{\underbrace{\G+\G+\dots+\G}}.    
\]

\begin{remark}\label{rem:toprel}
This also shows that the representaion of a numerical semigroup $\Se$
as the semigroup associated with an SSR graph is not unique. Moreover, one can construct an 
infinite family of representatives, which do not share immediate topological properties.
In particular, the associated semigroup we consider does not characterize completely the topological
type of the singularity. 
\end{remark}

% We would like to emphasize this fact through the following 
% example.
% 
% 
% \begin{ex}
%     \label{ex:ngor_vs_not}
%     Let $\G_1$ be defined by 
%     the Seifert invariants $Sf=(-2,(2,1),(3,2),(5,4))$
%     and let $\G_2=2\G_1$.
%     Using the results, which we will state and prove in Chapter
%     \ref{ch:main}, or by direct calculation one has that $\G_1$
%     is numerically Gorenstein.
%     However, in the case of $\G_2$ this is not true.
%     In particular, by the identity (\ref{eq:zk_comp})
%     the $E_0$-coefficient of $\gamma+1$
%     can be calculated to be $37/2$, which is not integral.   
% \end{ex}

\subsection{Example: representatives of $G(p,q)$} \label{s:gpq}

In this part we discuss the representability of the numerical semigroups $G(p,q)$ generated by two elements. These semigroups are strongly flat, but they are excluded from the result of \cite{stronglyflat}, cf. Theorem \ref{thm:Sgen}. 

The first remark towards to this direction is that in the definition of representable semigroups one can also allow SSR graphs with less than $3$ legs. This would also consider as possible representative links of cyclic quotient singularities (lens spaces), or even $S^3$, with a well-chosen Seifert structure. If we allow these cases, than there is a very natural choice for $G(p,q)$ which is as follows. 

Let $(C,0)\subset(\mathbb{C}^2,0)$ be an irreducible plane curve singularity defined by the analytic function germ $f:(\C^2,0)\to(\C,0)$. 
$(C,0)\subset(\mathbb{C}^2,0)$ admits a minimal good embedded resolution 
whose associated graph $\Gamma_f$ is a connected, negative definite tree
with an extra arrow representing the strict transform 
of $C$. The link of $(C,0)\subset(\mathbb{C}^2,0)$ is 
an algebraic knot $K\subset S^3$,
whose isotopy type can be completely characterized by many invariants such as: embedded resolution graph,
semigroup of $(C,0)$, Puiseaux pairs, Newton pairs, linking pairs or
the Alexander polynomial of the knot $K\subset S^3$. More details can be found in general references such as \cite{art_plane_sing,book_wall}.

Now assume that $(C,0)\subset(\C^2,0)$ has exactly one Puiseaux pair $(p,q)$, which means that the normal form of the defining equation is exactly 
$x^p+y^q=0$. In this case the graph $\Gamma_f$ has the following shape 
\begin{figure}[h!]
 \begin{center}
     \begin{tikzpicture}[scale=.6]
%     \clip(2,-2.5) rectangle (10,5);
    \draw [line width=.5pt] (7.,0.)-- (9.,0.);
    \draw [line width=.8pt,dotted] (5.5,0.)-- (6.5,0.);
   \draw [->,line width=.5pt] (9,0) -- (10.5,0);
 \draw [line width=.5pt] (9.,0.)-- (9.,-1.3);
  \draw [line width=.5pt] (9,-2.2)-- (9,-3.5);
\draw [line width=.8pt,dotted] (9,-1.5)-- (9,-2);
\draw [line width=.5pt] (3.,0.)-- (5.,0.);
    \draw [fill=black] (3,0) circle (0.1);
    \draw[color=black] (2.9585299006542685,0.5) node {$-v_l$};
    \draw [fill=black] (4.5,0) circle (0.1);
    \draw[color=black] (4.49,0.5) node {$-v_{l-1}$};
    \draw [fill=black] (7.5,0) circle (0.1);
    \draw[color=black] (7.47,0.5) node {$-v_1$};
    \draw [fill=black] (9.,0.) circle (0.1);
    \draw[color=black] (8.8,0.5) node {$-1$};
    \draw [fill=black] (9,-3.5) circle (0.1);
    \draw[color=black] (10,-3.5) node {$-u_k$};
    \draw [fill=black] (9,-1) circle (0.1);
    \draw[color=black] (9.8,-1) node {$-u_1$};
    \draw [fill=black] (9,-2.5) circle (0.1);
    \draw[color=black] (10,-2.5) node {$-u_{k-1}$}; 
    \end{tikzpicture}
    \end{center}
\end{figure}
where the decorations can be determined from $(p,q)$. Indeed,  
if we introduce the numbers $0<\w_p<p$ and $0<\w_q<q$ uniquely
determined by  the Diophantine equation $pq-\w_p q-\w_q p=1$,  then the  negative continued 
fractions $p/\w_p=[u_1,\dots,u_k]$ and $q/\w_q:=[v_1,\dots,v_l]$ give
the corresponding decorations of $\Gamma_f$. 

Another important invariant of an irreducible plane curve singularity is its
numerical semigroup $\Se_f$. In this case this is $G(p,q)$. Moreover, by \cite{LSz_module} $\Se_f$ can be written as
\begin{equation}
    \label{eq:planesing_asN}
    \Se_f
=\{\ell\in\mathbb{N}: N(\ell)\ge 0\},
\end{equation}
where $N(\ell):=\ell-\lceil\w_p \ell/ p\rceil-\lceil\w_q \ell/q\rceil$ is the quasi-linear function associated with the Seifert structure $Sf=(-1, (p,\w_p),(q,\w_q))$ of $S^3$, specified by the negative definite plumbing graph $\widetilde{\Gamma}_f:=\Gamma_f\setminus\{\mbox{arrow}\}$. 

If we restrict ourselves to SSR graphs with at least 3 legs only, then we can use $\widetilde{\Gamma}_f$ and \ref{sss:multip} to find a suitable representative. (Note that in fact \ref{sss:multip} can also be extended to SSR graphs with less than $3$ legs). Namely, for an arbitrary $k\geq 2$, $G(p,q)$ is represented as the semigroup associated with the SSR graph $k\widetilde{\Gamma}_f$, defined by the Seifert invariants $Sf=(-k,k\times (p,\w_p), k\times (q,\w_q))$. (Here the notation means that $(p,\w_p)$, as well as $(q,\w_q)$ appears $k$ times.) Thus we have concluded the following.

\begin{cor}\label{thm:twogen}
    The numerical semigroup $G(p,q)$ is representable.     
\end{cor}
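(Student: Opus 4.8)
The plan is to exhibit $G(p,q)$ as the semigroup $\Se_{\G}$ of an honest SSR graph $\G$, starting from the irreducible plane curve germ $x^p + y^q = 0$ and then boosting the number of legs by the multiplicity construction of \ref{sss:multip}. First I would recall that this plane curve singularity has semigroup $\Se_f = G(p,q)$ and that its link is $S^3$ equipped with the Seifert structure $Sf = (-1, (p,\w_p), (q,\w_q))$, where $0 < \w_p < p$ and $0 < \w_q < q$ are fixed by $pq - \w_p q - \w_q p = 1$. The associated quasi-linear function $N(\ell) = \ell - \lceil \w_p \ell/p\rceil - \lceil \w_q \ell/q\rceil$ then gives the description (\ref{eq:planesing_asN}), so that $G(p,q) = \{\ell \in \mathbb{N} : N(\ell) \geq 0\} = \Se_{\widetilde{\Gamma}_f}$, where $\widetilde{\Gamma}_f = \Gamma_f \setminus \{\mbox{arrow}\}$ is negative definite.

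The only obstruction to concluding immediately is that $\widetilde{\Gamma}_f$ carries just two legs, whereas an SSR graph is required to have $d \geq 3$. To fix this I would invoke \ref{sss:multip}: for every $k \in \mathbb{N}^*$ the sum $k\widetilde{\Gamma}_f$ has associated quasi-linear function $kN$, which has the same sign locus as $N$, whence $\Se_{k\widetilde{\Gamma}_f} = \Se_{\widetilde{\Gamma}_f} = G(p,q)$. Explicitly, $k\widetilde{\Gamma}_f$ is the star-shaped graph with Seifert invariants $Sf = (-k, k\times(p,\w_p), k\times(q,\w_q))$, so that each of the two original legs is repeated $k$ times.

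Taking any $k \geq 2$ then yields a graph with $2k \geq 4$ legs --- a genuine SSR graph in the sense of the paper --- whose semigroup is still $G(p,q)$; hence $G(p,q)$ is representable. I do not expect a serious obstacle here, since the multiplicity trick is already available and the negative definiteness of $k\widetilde{\Gamma}_f$ is automatic: its orbifold Euler number equals $ke < 0$ by the additivity noted in section \ref{sec:op}. The genuinely conceptual input is thus not the argument itself but the prior identification, via \cite{LSz_module}, of $G(p,q)$ with the sign locus of the concrete function $N$ coming from the $S^3$-plumbing; once that is in hand the corollary is essentially immediate. As an aside, relaxing the definition to allow fewer than three legs lets one take $\widetilde{\Gamma}_f$ itself as the representative, realizing $G(p,q)$ via a well-chosen Seifert structure on $S^3$.
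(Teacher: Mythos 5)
Your argument is exactly the paper's: identify $G(p,q)$ with the sign locus of $N(\ell)=\ell-\lceil\w_p\ell/p\rceil-\lceil\w_q\ell/q\rceil$ coming from the Seifert structure $(-1,(p,\w_p),(q,\w_q))$ on $S^3$ via \cite{LSz_module}, then pass to $k\widetilde{\Gamma}_f$ with $k\geq 2$ using the multiplicity construction of \ref{sss:multip} to obtain a legitimate SSR graph with the same semigroup. The proposal is correct and matches the paper's proof in every essential step, including the closing remark about allowing fewer than three legs.
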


\begin{remark}
Now, by Theorem \ref{thm:Sgen} and Theorem \ref{thm:twogen}, the representability of strongly flat semigroups is clarified completely. 
\end{remark}

\subsection{Bounds for representable semigroups}\label{sec:bounds}

    Let $\Se$ be a numerical semigroup and $k\in\mathbb{N}^*$. We consider the numerical semigroup 
    $$\Se/k:=\{\ell\in\mathbb{N}:k\ell\in\Se\}$$
      which is called {\it the quotient} of 
    $\Se$ by  $k$.

Next we analyze $\Se/k$  from the perspective of the quasi-linear function. Let $\calS$ be a representable semigroup  and $N(\ell)$ its associated quasi-linear function and $k\in \mathbb{N}^*$. We set $N^{(k)}(l):=N(k l)$.
Then the semigroup associated with $N^{(k)}$, given by 
\[
\{\ell\in\mathbb{N}:N(k \ell)\ge 0\}=
\{\ell\in\mathbb{N}:k \ell \in \Se_N\},
\]
 is exactly $\Se/k$. Moreover, one can prove that the quotient is also representable.
 
\begin{lemma}\label{lem:quotientrep}
The semigroup  $\Se/k$ is representable for any representable semigroup $\Se$ and $k\in \mathbb{N}^*$.
\end{lemma}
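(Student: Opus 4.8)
The plan is to show directly that the function $N^{(k)}(\ell)=N(k\ell)$ is itself the quasi-linear function of an explicitly constructed SSR graph $\G'$. By the discussion preceding the statement, $\Se/k=\{\ell\in\mathbb{N}:N^{(k)}(\ell)\ge 0\}$, so exhibiting such a $\G'$ with $N_{\G'}=N^{(k)}$ proves representability. Fix a representative $\G$ of $\Se$ with Seifert invariants $(-b_0;(\al_i,\w_i)_{i=1}^d)$, so that $N(\ell)=b_0\ell-\sum_i\lceil\ell\w_i/\al_i\rceil$ as in (\ref{defN}).

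First I would rewrite each ceiling term appearing in $N(k\ell)$. Setting $d_i:=\gcd(k,\al_i)$ and writing $k=d_ik_i'$, $\al_i=d_i\al_i'$ with $\gcd(k_i',\al_i')=1$, one has $k\w_i/\al_i=k_i'\w_i/\al_i'$. Writing $k_i'\w_i=m_i\al_i'+\w_i'$ with $m_i=\lfloor k_i'\w_i/\al_i'\rfloor$ and $0\le\w_i'<\al_i'$, the equality $k\ell\w_i/\al_i=m_i\ell+\w_i'\ell/\al_i'$ together with $m_i\ell\in\Z$ gives
\[
\Big\lceil\frac{k\ell\w_i}{\al_i}\Big\rceil=m_i\ell+\Big\lceil\frac{\w_i'\ell}{\al_i'}\Big\rceil.
\]
Substituting into $N(k\ell)$ yields
\[
N^{(k)}(\ell)=\Big(kb_0-\sum_i m_i\Big)\ell-\sum_i\Big\lceil\frac{\w_i'\ell}{\al_i'}\Big\rceil,
\]
where the legs with $\al_i'=1$ (that is, $\al_i\mid k$) have $\w_i'=0$ and simply drop out of the sum.

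Next I would check that the data $(-b_0';(\al_i',\w_i')_{i:\,\al_i'>1})$ with $b_0':=kb_0-\sum_i m_i$ is a genuine normalized negative definite Seifert datum, hence defines an SSR graph $\G'$ with $N_{\G'}=N^{(k)}$. Normalization is immediate: since $\gcd(\w_i,\al_i)=1$ forces $\gcd(k_i'\w_i,\al_i')=1$, each surviving leg satisfies $0<\w_i'<\al_i'$ and $\gcd(\w_i',\al_i')=1$. For negative definiteness I would compute the orbifold Euler number using $\w_i'/\al_i'=k\w_i/\al_i-m_i$, obtaining $e'=-b_0'+\sum_i\w_i'/\al_i'=k\big(-b_0+\sum_i\w_i/\al_i\big)=ke<0$. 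The same inequality $b_0>\sum_i\w_i/\al_i$ (equivalent to $e<0$) gives $b_0'=kb_0-\sum_i m_i>kb_0-k\sum_i\w_i/\al_i>0$, so $b_0'\ge 1$ and the central decoration $-b_0'$ is admissible.

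The only genuine obstacle is that deleting the legs with $\al_i\mid k$ may leave $\G'$ with fewer than three legs. This is harmless: if every leg vanishes then $N^{(k)}(\ell)=b_0'\ell\ge 0$ for all $\ell\ge 0$, so $\Se/k=\mathbb{N}$, which is representable; otherwise I would invoke the multiplicity observation of \ref{sss:multip}, replacing $\G'$ by $m\G'$ for a suitable $m\ge 2$. Since $\Se_{m\G'}=\Se_{\G'}=\Se/k$ while $m\G'$ carries $m$ times as many legs, this restores the condition $d\ge 3$ without altering the associated semigroup, completing the proof.
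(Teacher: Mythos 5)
Your argument is correct, but it is not the route the paper takes for this lemma. The paper's proof is geometric and essentially one line: represent $\Se$ by a weighted homogeneous singularity $(X,0)$ with graded local algebra $R_X=\oplus_{\ell\geq 0}H^0(E_0,\calO_{E_0}(D^{(\ell)}))$, and observe that the Veronese subring $R_X^{(k)}=\oplus_{\ell\geq 0}H^0(E_0,\calO_{E_0}(D^{(k\ell)}))$ is again the local algebra of a normal weighted homogeneous singularity, whose associated semigroup is exactly $\Se/k$. What you do instead is the combinatorial construction that the paper only sketches afterwards in Remark \ref{rem:quotient}: realize $N^{(k)}(\ell)=N(k\ell)$ directly as the quasi-linear function of an explicit Seifert datum. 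Your version is in fact more careful than that remark on three points the remark glosses over: you reduce the fractions so that the new invariants $(\al_i',\w_i')$ are genuinely normalized (the remark's $(\al_i,r_i)$ may have $\gcd(r_i,\al_i)>1$, or $r_i=0$ when $\al_i\mid k$), you discard the degenerate legs, and you restore the $d\geq 3$ condition via the multiplicity trick of \ref{sss:multip}. The trade-off is the usual one: the Veronese argument is shorter and produces an actual singularity at once, while your construction is purely arithmetic, gives the representative of $\Se/k$ explicitly in terms of the Seifert invariants of $\G$, and requires no input from the analytic theory beyond the identification $\Se_\G=\{\ell:N(\ell)\geq 0\}$. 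One cosmetic quibble: in the chain $b_0'=kb_0-\sum_i m_i>kb_0-k\sum_i\w_i/\al_i>0$ the first inequality need not be strict when every $\al_i'=1$, but the conclusion $b_0'>0$ still follows from the second, strict, inequality.
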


\begin{proof}
 We represent $\Se$ by a normal weighted homogeneous surface singularity $(X,0)$ with minimal good dual resolution graph $\Gamma$ whose central vertex is denoted by $E_0$. Then the local algebra is expressed as $R_X=\oplus_{\ell\geq 0} H^0(E_0, \calO_{E_0}(D^{(\ell)}))$, see (\ref{eq:DPD}) and the definition of $D^{(\ell)}$ therein. Then the Veronese subring $R_X^{(k)}:=\oplus_{\ell\geq 0} H^0(E_0, \calO_{E_0}(D^{(k\ell)}))$ is also normal and therefore it corresponds to a normal weighted homogeneous surface singularity. Moreover, the associated semigroup is exactly $\Se/k$.
\end{proof}

\begin{remark}\label{rem:quotient}
 For the previous lemma one can give a combinatorial proof too which constructs explicitely from a representaive $\Gamma$ of $\Se$ a representative  $\Gamma^{(k)}$ of $\Se/k$.

We fix $k\geq 1$ and assume that $\Gamma$ has Seifert invariants $ Sf=(-b_0, (\al_i,\w_i )_{i=1}^n)$. Then the induced quasi-linear function of the quotient $\calS/k$  is expressed as 
 $$N^{(k)}(\ell)=N(k\ell)=b_0k\ell - \sum_{i=1}^n \Big\lceil \frac{k\ell \w_i}{\al_i}\Big\rceil.$$
 For every $i=1,\dots,n$ we consider  $0\leq r_i <\al_i$ satisfying $k\w_i \equiv r_i$ (mod $\al_i$). 
Then one gets $N^{(k)}(\ell)=(k|e|+\sum_{i=1}^n r_i/\al_i)\ell - \sum_{i=1}^n \lceil r_i/\al_i\rceil$. First of all, notice that $k|e|+\sum_{i=1}^n r_i/\al_i\in \Z_{>0}$ and  the new orbifold Euler number $e^{(k)}=ke$ is negative since $e<0$. Hence, we can associate with $N^{(k)}$ a negative definite star-shaped graph $\Gamma^{(k)}$ with Seifert invariants  $ Sf^{(k)}=(ke-\sum_{i=1}^n r_i/\al_i, (\al_i,r_i )_{i=1}^n)$ which represents the quotient semigroup $\calS/k$.
 
\end{remark}

\begin{example}\label{ex:G35_2}
As an illustration of the previous construction we consider the semigroup $G(3,5)$. We claim that it can be represented by the graph on the left hand side of Figure \ref{fig:1} (see section \ref{s:gpq}). By Remark \ref{rem:quotient} the associated quasi-linear function of the quotient semigroup $G(3,5)/2$ is written as $N^{(2)}(\ell)=2\ell-2\lceil\ell/5\rceil-2\lceil 2\ell/3\rceil$ which provides the graph drawn on the right hand side of Figure \ref{fig:1}. Moreover, since the set of gaps of this quotient is $\{1,2\}$, we have $G(3,5)/2=G(3,4,5)$.
\begin{figure}[h!]
\begin{center}
\begin{minipage}{5cm}
\begin{tikzpicture}[scale=.6]
\coordinate (v0) at (6,0);
\draw[fill] (v0) circle (0.1);
\coordinate (v11) at (5,-1);
\draw[fill] (v11) circle (0.1);
\coordinate (v12) at (4,-2);
\draw[fill] (v12) circle (0.1);
\coordinate (v21) at (4.5,0);
\draw[fill] (v21) circle (0.1);
% \coordinate (v22) at (3,0);
% \draw[fill] (v22) circle (0.1);
\coordinate (v31) at (7.5,0);
\draw[fill] (v31) circle (0.1);
% \coordinate (v32) at (9,0);
% \draw[fill] (v32) circle (0.1);
\coordinate (v41) at (7,-1);
\draw[fill] (v41) circle (0.1);
\coordinate (v42) at (8,-2);
\draw[fill] (v42) circle (0.1);
\draw  (v12) edge (v0);
\draw  (v21) edge (v0);
\draw  (v31) edge (v0);
\draw  (v42) edge (v0);
\draw[color=black] (6,0.4) node {\small $-2$};
\draw[color=black] (4.3,-1) node {\small $-2$};
\draw[color=black] (3.3,-2) node {\small $-3$};
\draw[color=black] (4.5,0.4) node {\small $-3$};
\draw[color=black] (7.5,0.4) node {\small $-3$};
\draw[color=black] (7.7,-1) node {\small $-2$};
\draw[color=black] (8.7,-2) node {\small $-3$};
\end{tikzpicture} 
\end{minipage}
\begin{minipage}{5cm}
\begin{tikzpicture}[scale=.6]
\coordinate (v0) at (6,0);
\draw[fill] (v0) circle (0.1);
\coordinate (v11) at (5,-1);
\draw[fill] (v11) circle (0.1);
\coordinate (v12) at (4,-2);
\draw[fill] (v12) circle (0.1);
\coordinate (v21) at (4.5,0);
\draw[fill] (v21) circle (0.1);
% \coordinate (v22) at (3,0);
% \draw[fill] (v22) circle (0.1);
\coordinate (v31) at (7.5,0);
\draw[fill] (v31) circle (0.1);
% \coordinate (v32) at (9,0);
% \draw[fill] (v32) circle (0.1);
\coordinate (v41) at (7,-1);
\draw[fill] (v41) circle (0.1);
\coordinate (v42) at (8,-2);
\draw[fill] (v42) circle (0.1);
\draw  (v12) edge (v0);
\draw  (v21) edge (v0);
\draw  (v31) edge (v0);
\draw  (v42) edge (v0);
\draw[color=black] (6,0.4) node {\small $-2$};
\draw[color=black] (4.3,-1) node {\small $-2$};
\draw[color=black] (3.3,-2) node {\small $-2$};
\draw[color=black] (4.5,0.4) node {\small $-5$};
\draw[color=black] (7.5,0.4) node {\small $-5$};
\draw[color=black] (7.7,-1) node {\small $-2$};
\draw[color=black] (8.7,-2) node {\small $-2$};
\end{tikzpicture} 
\end{minipage}
\end{center}
\caption{A representation of $G(3,5)$ and $G(3,4,5)$}
\label{fig:1}
\end{figure}
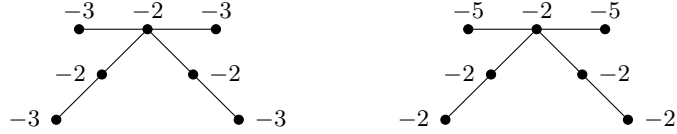
\end{example}

In the sequel we will give `bounds' for representable semigroups. It will be crucial in the next section for the characterization of flat semigroups.

   Let $\G$ be a SSR graph with Seifert invariants 
  \begin{equation}
    \label{eq:sf_multiple}
    Sf=\big(-b_0,s_1\times(\al_1,\w_1),\dots,s_n\times(\al_n,\w_n)
    \big), 
  \end{equation}
   where $(\al_i,\w_i)\neq(\al_j,\w_j)$  for different indices $i\neq j$. Here $s_i$ stands for the `multiplicity' of the $(\al_i,\w_i)$-type leg. Then, regarding its associated semigroup $\calS_{\Gamma}$ we obtain the following result.

   \begin{thm}\label{thm:numbound}
       One has the following inclusions
       \begin{equation}\label{eq:incl}
        G(\al,s_1 \al_{-1},\dots, s_n \al_{-n})\subset \Se_\G
        \subset G(\al,s_1 \al^*_1,\dots,s_n \al^*_n)/\oo,
        \end{equation}
        where $\al^*_i:=\al/\al_i$, $\al_{-i}=\mathrm{lcm}_{j\neq i}(\al_j)$ and the bounds are considered as submonoids of $\mathbb{N}$.
   \end{thm}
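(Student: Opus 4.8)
The plan is to treat the two containments in \eqref{eq:incl} separately, exploiting that both bounding monoids are finitely generated while $\Se_\G=\{\ell\in\mathbb{N}:N(\ell)\ge 0\}$ is cut out by a single sign condition. In both directions the Seifert data enters through the divisibility relations $\al_j\mid\al_{-i}$ (for $j\ne i$) and $\al_i\mid\al$, which make the relevant ceilings exact.

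For the lower inclusion, since $\Se_\G$ is closed under addition it suffices to check that each generator of $G(\al,s_1\al_{-1},\dots,s_n\al_{-n})$ lies in $\Se_\G$, i.e. that $N$ is nonnegative on $\al$ and on each $s_i\al_{-i}$. The value $N(\al)=\oo>0$ is immediate from Proposition \ref{prop:INEQ}(c). For $s_i\al_{-i}$ the point is that $\al_j\mid\al_{-i}$ for every $j\ne i$, so all the ceilings except the $i$-th one are exact; after factoring out $s_i$ one is left with
$$N(s_i\al_{-i})=s_i\Big(\al_{-i}|e|+\tfrac{s_i\w_i\al_{-i}}{\al_i}-\big\lceil\tfrac{s_i\w_i\al_{-i}}{\al_i}\big\rceil\Big).$$
Here the non-integral contributions cancel, so the parenthesis is an integer that exceeds $\al_{-i}|e|-1>-1$ (using $|e|>0$), hence is $\ge 0$. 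This gives $N(s_i\al_{-i})\ge 0$ and therefore the left-hand containment.

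For the upper inclusion I would first record the identity, valid for every $\ell\ge 0$,
$$\oo\,\ell=N(\ell)\,\al+\sum_{i=1}^n\epsilon_i\,(s_i\al^*_i),\qquad \epsilon_i:=\al_i\big\lceil\tfrac{\ell\w_i}{\al_i}\big\rceil-\ell\w_i\in\{0,\dots,\al_i-1\},$$
obtained by expanding $\oo=\al|e|$ and substituting $\ell\w_i\al^*_i=\al\lceil \ell\w_i/\al_i\rceil-\epsilon_i\al^*_i$ into $\oo\ell=\al b_0\ell-\sum_i s_i\ell\w_i\al^*_i$. For $\ell\in\Se_\G$ one has $N(\ell)\ge 0$, and the defect coefficients $\epsilon_i$ are always nonnegative integers, so the right-hand side exhibits $\oo\ell$ as a nonnegative integral combination of $\al$ and of the $s_i\al^*_i$. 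Thus $\oo\ell\in G(\al,s_1\al^*_1,\dots,s_n\al^*_n)$, which is exactly the assertion $\ell\in G(\al,s_1\al^*_1,\dots,s_n\al^*_n)/\oo$.

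I expect the identity in the last step to be the crux of the argument: the lower bound is a direct, if slightly delicate, positivity check on two families of generators, whereas the correct bookkeeping that turns $\oo\ell$ into a manifestly nonnegative combination — in particular the observation that the ceiling-defects $\epsilon_i$ attach precisely to the generators $s_i\al^*_i$, while the coefficient of $\al$ is exactly $N(\ell)$ — is the one genuinely nonobvious point. Once this identity is in hand, everything reduces to elementary manipulation of the definition of $N(\ell)$ together with $\oo=\al|e|$ and $\al^*_i=\al/\al_i$.
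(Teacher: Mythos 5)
Your proof is correct and follows essentially the same route as the paper: the upper bound rests on the identity $\oo\ell=\al N(\ell)+\sum_i s_i\al^*_i\bigl(\al_i\lceil\ell\w_i/\al_i\rceil-\ell\w_i\bigr)$, which is exactly the paper's equation with $\epsilon_i=\al_i f(\w_i\ell/\al_i)$, and the lower bound is the same computation of $N(s_i\al_{-i})$ using exactness of the $j\neq i$ ceilings (your integrality-plus-$>-1$ argument is just a repackaging of the paper's $N(s_i\al_{-i})=-s_i\lceil e\al_{-i}\rceil\ge 0$), combined with $N(\al)=\oo>0$ and superadditivity. No gaps.
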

   \begin{proof}
       We will prove first the upper bound.  Using (\ref{eq:sei2}) and the definition of $N(\ell)$  one writes the expression 
       \begin{equation}
           \label{eq:l_as_N}
           \oo \ell=\alpha N(\ell)+\sum_{i=1}^n  s_i \al^*_i \cdot \al_i f\left(\frac{\w_i \ell}{\al_i}\right),
       \end{equation} 
       where $f(x)=\lceil x\rceil-x$ for any $x\in \mathbb{Q}$.
       Note that if $N(\ell)\geq 0$ then the right-hand side 
       of (\ref{eq:l_as_N}) is a nonnegative linear form of the generators  $\al$ and $\{s_i\al^*_i\}_{i=1}^n$. This implies that  
       \[
        \Se_{\G}\subset\h{\ell\in\mathbb{N}:\oo \ell\in G(\al,s_1 \al^*_1,\dots,s_n \al^*_n)}=
        G(\al,s_1 \al^*_1,\dots,s_n \al^*_n)/\oo, 
       \]
where $G(\al,s_1 \al^*_1,\dots,s_n \al^*_n)$ is  a submonoid of $\mathbb{N}$, not necessarily a numerical semigroup. 

In order to see the lower bound we can proceed as follows. For any fixed $i\in \{1,\dots,n\}$ the definition of the orbifold Euler number gives the expression
        \[
        s_i \frac{\w_i}{\al_i}=b_0+e-\sum_{j\neq i} s_j \frac{\w_j}{\al_j},
         \]
which is used to deduce the following inequality:
        \begin{align*}
            N(s_i\al_{-i})&=b_0 s_i\al_{-i}-
            \sum\limits_{j\neq i}s_j\left\lceil s_i \al_{-i}\frac{\w_j}{\al_j}\right\rceil -s_i\left\lceil\al_{-i} \left(s_i\frac{\omega_i}{\al_i}\right)\right\rceil\\       
%             &= -s_i\cl{b_0  \al_{-k}+e  \al_{-k}-\sum\limits_{i\neq k}   \al_{-k} s_i \frac{\w_i}{\al_i}}\\
            &=b_0 s_i \al_{-i}-
            s_i\sum\limits_{j\neq i} s_j \al_{-i} \frac{\w_j}{\al_j}
            -s_i\Big(b_0 \al_{-i}-\sum\limits_{j\neq i} \al_{-i} s_j\frac{\w_j}{\al_j}
            + \left\lceil e \al_{-i}\right\rceil\Big)\\
            &=-s_i\left\lceil e \al_{-i}\right\rceil\ge 0.
        \end{align*}
Furthermore, one also has the properties $N(\ell_1+\ell_2)\geq N(\ell_1)+N(\ell_2)$ and $N(\al)=\oo>0$ (Proposition \ref{prop:INEQ}(c)) which imply that $G(\al, s_1\al_{-1},\dots,s_n\al_{-n})\subset S_{\Gamma}$.       
   \end{proof}
   
\section{Representability of flat semigroups}{\label{sec:flat}}
We consider a special case in (\ref{eq:incl}) and characterize those  representable semigroups
which realize the bounds. It turns out that these are exactly the {\em flat semigroups}. In this section we will provide their characterization, prove that they are representable and discuss about their `canonical geometric representatives'.

\subsection{}
Consider the inclusions from (\ref{eq:incl}) and assume that $\oo=1$, $\al_{-i}=\al^*_i$  and $\gcd(s_i,\al_i)=1$ for every $i$. In this case the bounds are in fact numerical semigroups, they coincide and  (\ref{eq:incl}) becomes an identity. On the other hand, the condition $\al_{-i}=\al^*_i$  can be achieved exactly when the 
numbers $\al_i$ are pairwise relatively primes. Hence we get  $\al_{-i}=\al^*_i=\prod_{j\neq i}\al_j$ for which we will use the unified notation $\z{\al}_i$. 

In summary, one deduces 
the following consequence: 
if the graph $\Gamma$ defined by the Seifert invariants
$
     Sf=\big(-b_0,s_1\times(\al_1,\w_1),\dots,s_n\times(\al_n,\w_n)\big)    
$
satisfies $\oo=1$,  the numbers $\{\al_i\}_{i=1}^n$ are 
pairwise relatively prime integers and $\gcd(s_i,\al_i)=1$ for every $i$, 
then 
\begin{equation}
    \label{eq:sg_flat}
    \Se_\G=G(\al,s_1\z{\al}_1,\dots,s_n\z{\al}_n). 
\end{equation}
Moreover, in this case $\Se_\G$ is a flat semigroup.
Indeed, using  the notations from section \ref{ss:classsgp} one gets
$\q_0=\gcd(\{s_j\}_j)$ and $q_i=\al_i$ for any $i\geq 1$. This implies that  $\widehat{q}_0=\al_1\dots\al_n=\alpha$, $\widehat{q}_i=q_0\cdot\widehat{\al}_i$ for $i\geq 1$, $\widehat{s}_0=1$ and $\widehat{s}_i=s_i/q_0$ for $i\geq 1$, 
% and $\tilde{s}_i:=s_i/\al_{n+1}$,
% we can rewrite the minimal generator set of $\Se_\G$
% as
% \begin{equation}
%     \label{eq:gen_of_flat}
%     \{\tilde{s}_1\al_2\dots\al_{n+1},\tilde{s}_i\al_1\dots\al_{i-1}\al_{i+1}\dots\al_{n+1},\al_1\dots\al_n:1\le i\le n\},
% \end{equation} 
hence it clearly satisfies the flatness condition (cf. Definition \ref{df:flatnes}) at $i=0$.
% , wherewe  used that $\al=\al_1\dots\al_n$, because the numbers 
% $\{\al_i\}_{i=1}^{n}$ are relatively prime.
% Now we show the reverse, that is every flat semigroup has the 
% form as in $(\ref{eq:sg_flat})$.  
\vspace{0.2cm}

Now, we start with a presentation $G(a_0,\dots,a_n)$ of a flat semigroup and  assume that $\widehat{s}_0=1$. Note that $\{a_0,\dots,a_n\}$ is not necessarily the minimal set of generators. In particular, when $n=1$, for the next construction we have to use a presentation with at least three generators, eg. $G(a_0a_1,a_0,a_1)$.

The chosen set of generators can be read as 
$\{\widehat{q}_0,\widehat{s}_1\widehat{q}_1,\dots, \widehat{s}_n\widehat{q}_n\}$ 
where we set the numbers $\q_i:=\gcd(a_0,\dots,a_{i-1},a_{i+1},\dots, a_n)$, $\widehat{\q}_i:=\prod_{j\neq i} \q_j$ and $\widehat{s}_i:=a_i/\widehat{\q}_i$ for all $i\in\{0,\dots,n\}$. Note that $\widehat{s}_i$ is an integer since $\gcd(\q_i,\q_j)=1$ for every $ i\neq j$, and one also follows that $\gcd(\widehat{s}_i,\q_i)=1$.  By setting $\al_i:=q_i$ for $i\geq 1$ and $s_i:=q_0\cdot \widehat{s}_i$ for every $i\geq 0$ one gets that $\{\al_i\}_i$ are pairwise relatively primes and $\gcd(s_i,\al_i)=1$ if $i\geq 1$. Moreover, one identifies $\widehat{q}_0=\alpha$ and $\widehat{s}_i\widehat{q}_i=s_i\widehat{\al}_i$ for every $i\geq 1$, hence the semigroup is presented  in the form of (\ref{eq:sg_flat}).  

The previous argument provides the `arithmetical' characterization of flat semigroups which was also proved in \cite{art_rch}.
\begin{thm}[\cite{art_rch}]
    $\Se$ is a flat semigroup if and only if there exist pairwise relatively prime integers $\al_i\geq 2$ ($i\in \{1,\dots,n\}$) such that $\Se$ can be presented as $G(\al,s_1\z{\al}_1,\dots,s_n\z{\al}_n)$ where $\alpha=\alpha_1\dots\al_n$, $\z{\al}_i=\prod_{j\neq i} \al_j$ and $\gcd(\al_i,s_i)=1$ for every $i$.
\end{thm}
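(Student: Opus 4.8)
The plan is to prove both implications by a direct bookkeeping of the greatest common divisors appearing in the decomposition (\ref{eq:genset}), exploiting pairwise coprimality of the $\al_i$ together with the constraints $\gcd(\al_i,s_i)=1$.

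For the ``if'' direction, suppose $\Se=G(\al,s_1\z{\al}_1,\dots,s_n\z{\al}_n)$ with $\al=\al_1\cdots\al_n$, the $\al_i$ pairwise relatively prime and $\gcd(\al_i,s_i)=1$. I would index the generators by $\{0,1,\dots,n\}$, setting $a_0:=\al$ and $a_i:=s_i\z{\al}_i$ for $i\ge 1$, and compute the quantities $\q_i,\widehat{\q}_i,\widehat{s}_i$ of section \ref{ss:classsgp} for this (a priori non-minimal) presentation. The key computation is that $\q_j=\al_j$ for every $j\ge 1$: the divisibility $\al_j\mid \q_j$ is immediate since $\al_j\mid a_0$ and $\al_j\mid \z{\al}_i\mid a_i$ for $i\ne j$, while the reverse $\q_j\mid \al_j$ follows because any prime $p\mid \q_j$ divides $a_0=\prod_k\al_k$, hence some $\al_m$; if $m\ne j$ then $p\nmid s_m\z{\al}_m=a_m$ (using $\gcd(\al_m,s_m)=1$ and $p\nmid\z{\al}_m$), contradicting $p\mid\q_j$, so $m=j$, and pairwise coprimality forces $\q_j=\al_j$ exactly. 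Consequently $\widehat{\q}_0=\prod_{j\ge 1}\q_j=\prod_{j\ge 1}\al_j=\al=a_0$, so $\widehat{s}_0=a_0/\widehat{\q}_0=1$, which is precisely the flatness condition of Definition \ref{df:flatnes} at the index $0$. Since this condition passes from a non-minimal to the minimal generating system (the remark following Definition \ref{df:flatnes}), $\Se$ is flat.

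For the ``only if'' direction I would run the construction already sketched before the statement: choosing a presentation $G(a_0,\dots,a_n)$ in which the flat index has been relabelled so that $\widehat{s}_0=1$ (and, in the two-generator case $n=1$, first replacing it by the three-generator presentation $G(a_0a_1,a_0,a_1)$), I set $\al_i:=\q_i$ for $i\ge 1$ and $s_i:=\q_0\widehat{s}_i$ for all $i$. Then the pairwise coprimality $\gcd(\q_i,\q_j)=1$ gives pairwise coprime $\al_i$, while $\gcd(\widehat{s}_i,\q_i)=1$ together with $\gcd(\q_0,\q_i)=1$ yields $\gcd(s_i,\al_i)=1$ for $i\ge 1$; finally $\widehat{\q}_0=\prod_{j\ge1}\q_j=\al$ forces $a_0=\widehat{s}_0\widehat{\q}_0=\al$, and $\widehat{\q}_i=\q_0\z{\al}_i$ gives $a_i=\widehat{s}_i\widehat{\q}_i=s_i\z{\al}_i$, exhibiting $\Se$ in the form (\ref{eq:sg_flat}).

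The genuinely delicate part is the gcd bookkeeping in the ``if'' direction, namely pinning down $\q_j$ exactly rather than merely up to divisibility, which is where the hypotheses $\gcd(\al_i,s_i)=1$ and pairwise coprimality are both essential. The two remaining subtleties are purely administrative: the $n=1$ edge case, handled by padding to a three-generator presentation as above, and the passage between minimal and non-minimal generating sets, which is covered by the remark following Definition \ref{df:flatnes}.
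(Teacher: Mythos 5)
Your proposal is correct and follows essentially the same route as the paper: the ``only if'' direction reproduces verbatim the paper's construction ($\al_i:=\q_i$, $s_i:=\q_0\widehat{s}_i$, with the $n=1$ case padded to $G(a_0a_1,a_0,a_1)$), and the ``if'' direction rests on the same gcd computation $\q_j=\al_j$, $\widehat{\q}_0=\al$, $\widehat{s}_0=1$ that the paper asserts, for which you supply a slightly more detailed justification. No gaps.
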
 
Next we show that, once the presentation (\ref{eq:sg_flat}) is fixed, there exists a canonical way to represent a flat semigroup as a semigroup associated with an SSR graph.
\begin{thm}
    \label{thm_flat_rep}
    Every flat semigroup is representable.
\end{thm}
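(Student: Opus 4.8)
The plan is to realize the given flat semigroup directly as $\Se_\G$ for an SSR graph $\G$ read off from the presentation (\ref{eq:sg_flat}). By the discussion preceding (\ref{eq:sg_flat}), a flat semigroup $\Se$ admits a presentation $G(\al,s_1\z{\al}_1,\dots,s_n\z{\al}_n)$ in which the integers $\al_i\ge 2$ are pairwise relatively prime, $\al=\al_1\cdots\al_n$, $\z{\al}_i=\prod_{j\ne i}\al_j$ and $\gcd(s_i,\al_i)=1$ for every $i$. I would fix such a presentation and look for Seifert invariants $Sf=(-b_0,s_1\times(\al_1,\w_1),\dots,s_n\times(\al_n,\w_n))$ of a negative definite star-shaped graph satisfying $\oo=1$. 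Once this is achieved, the remaining hypotheses of (\ref{eq:sg_flat}) (pairwise coprimality of the $\al_i$ and $\gcd(s_i,\al_i)=1$) already hold by assumption, so that statement yields $\Se_\G=G(\al,s_1\z{\al}_1,\dots,s_n\z{\al}_n)=\Se$, completing the proof.

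The heart of the argument is thus the construction of the $\w_i$ and $b_0$. Using $\oo=\al|e|$ from (\ref{eq:sei2}) together with $e=-b_0+\sum_i s_i\w_i/\al_i$, the condition $\oo=1$ is equivalent to the single Diophantine equation $\al b_0-\sum_{i=1}^n s_i\w_i\z{\al}_i=1$, where I use $\al/\al_i=\z{\al}_i$. Reducing this modulo $\al_k$ and noting that $\al_k\mid\z{\al}_i$ for $i\ne k$, it decouples into the $n$ congruences $s_k\z{\al}_k\,\w_k\equiv-1\pmod{\al_k}$. Since $\gcd(s_k,\al_k)=1$ and $\gcd(\z{\al}_k,\al_k)=1$, the coefficient $s_k\z{\al}_k$ is a unit modulo $\al_k$, so there is a unique $\w_k$ with $0<\w_k<\al_k$ solving the $k$-th congruence; being $-1$ times a unit it is automatically coprime to $\al_k$, exactly as required of a Seifert invariant. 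With the $\w_k$ fixed, the Chinese Remainder Theorem guarantees $\sum_i s_i\w_i\z{\al}_i\equiv-1\pmod{\al}$, so $b_0:=(1+\sum_i s_i\w_i\z{\al}_i)/\al$ is a positive integer. Finally $\al e=-(\al b_0-\sum_i s_i\w_i\z{\al}_i)=-1$, hence $e=-1/\al<0$ and the graph is negative definite, a genuine SSR graph with $\oo=1$ by construction.

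The only point needing separate care is the SSR convention that the number of legs $d=\sum_i s_i$ be at least three. Because one is free to choose the presentation (\ref{eq:sg_flat}), this can be arranged in all but the genuinely degenerate cases (the full semigroup $\mathbb{N}$ and the two-generated semigroups), which are already covered, the former by the minimal rational graphs and the latter by Corollary \ref{thm:twogen}, and where one may in any case work in the slightly extended sense allowing fewer legs as in Section \ref{s:gpq}. I expect this leg-counting to be the only fiddly bookkeeping; the substantive step, the solvability of the Diophantine equation, is forced cleanly by the coprimality hypotheses and is therefore essentially the whole content. Note also that the construction singles out a preferred solution (the unique residues $\w_i\in(0,\al_i)$ and the resulting $b_0$), which produces the canonical representative of $\Se$ referred to in the introduction.
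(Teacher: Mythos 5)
Your proposal is correct and follows essentially the same route as the paper: both reduce the problem to producing Seifert invariants $(-b_0,s_1\times(\al_1,\w_1),\dots,s_n\times(\al_n,\w_n))$ with $\oo=1$, i.e.\ to solving $\al b_0-\sum_i s_i\w_i\z{\al}_i=1$, and then invoke the identity (\ref{eq:sg_flat}). The only difference is in how that equation is solved -- you decouple it modulo each $\al_k$ and apply the Chinese Remainder Theorem directly, whereas the paper starts from the known multiplicity-free solution $\widetilde b_0,\widetilde\w_1,\dots,\widetilde\w_n$ of $\al(\widetilde b_0-\sum_i\widetilde\w_i/\al_i)=1$ and corrects each $\widetilde\w_i$ by a multiple $k_i\al_i$ to make it divisible by $s_i$; the two constructions produce the same canonical representative.
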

\begin{proof}
Let $\calS=G(\al,s_1\z{\al}_1,\dots,s_n\z{\al}_n)$ be a fixed presentation of the flat semigroup $\calS$. We would like to find the appropiate $b_0\geq 1$ and $\w_1,\dots,\w_n$ such that $0<\omega_i<\al_i$ and  the Seifert invariants 
\begin{equation}\label{eq:Sfcr}
Sf=(-b_0,s_1\times(\al_1,\w_1),\dots,s_n\times(\al_n,\w_n))
\end{equation}     
define a negative definite star-shaped plumbing graph $\Gamma$ wiht $\oo=\al|e|=1$.  Note that this later condition is equivalent with $b_0-\sum_i s_i \w_i/ \al_i= 1/\al$.   

If we ignore the $s_i$-multiplicities, then the Diophantine equation $\al(\widetilde{b}_0-\sum_i \widetilde{w}_i/\al_i)=1$ has a unique solution $\widetilde{b}_0,\widetilde{\w}_1,\dots \widetilde{\w}_n$, see \cite{stronglyflat}.
 In addition, if $\widetilde{\w}_i$ is divisible by $s_i$ for every $i$,
 then one writes $\widetilde{b}_0-\sum_i s_i \frac{\widetilde{\w}_i/s_i}{\al_i}=1/\al$, hence by setting $\w_i:=\widetilde{\w}_i/s_i$  the construction
 is finished. However, in general, this divisibility does not hold and 
 we have to perturb the initial solution $\widetilde{b}_0,\widetilde{\w}_1,\dots,\widetilde{\w}_n$ as follows.
 
 Note that for arbitrary $k_1,\dots,k_n\in \mathbb{Z}_{\geq 0}$ we can write the 
 following identity
 \[
    \frac{1}{\al}=\widetilde{b}_0+k_1+k_2+\dots+k_n-\sum_{i=1}^n \frac{k_i \al_i+\widetilde{\w}_i}{\al_i}.
 \]
Since the positive integers $s_i$ and $\al_i$ are relatively prime,
 we choose $k_i$ to be the unique  
non-negative solution of  the equation $k_i \al_i+\widetilde{\w}_i\equiv 0\, (\mathrm{mod } \ s_i)$ such that $0\leq k_i<s_i$.
 In this case $k_i \al_i+\widetilde{\w}_i$ is divisible by $s_i$, hence we can define  
 \[
    \w_i:=(k_i \al_i+\widetilde{\w}_i)/s_i\in\mathbb{N}\ \ \mbox{for every } i\quad
    \mbox{ and}\quad 
    b_0:=\widetilde{b}_0+k_1+\dots+k_n >0. 
 \] This yields that
 \[
 0<\w_i<\frac{1}{s_i}\left((s_i-1)\al_i+\al_i\right)=\al_i\mbox{ for every } 1\le i\le n \quad\text{ and }\quad     e=b_0-\sum_{i=1}^n s_i \frac{\w_i}{\al_i}=\frac{1}{\al},
 \] hence we get $\oo=1$ which finishes the proof.

\end{proof}

\begin{remark}\label{rem:CRSF}
In the case when $s_i=1$ for any $i$, the graph constructed in the proof of Theorem \ref{thm_flat_rep} is the canonical representative of a strongly flat semigroup. This motivates the following definition.  
\end{remark}

 \begin{definition}
 We say that the representative constructed in Theorem \ref{thm_flat_rep} is a 
 {\it canonical representative} of the flat semigroup $\calS$ associated with its presentation $G(\al,s_1\z{\al}_1,\dots,s_n\z{\al}_n)$. 
 \end{definition}
 
 \begin{remark}\label{rem:canrep}
(a) \ The next example illustrates how does a canonical representative depend on the presentation $G(\al,s_1\z{\al}_1,\dots,s_n\z{\al}_n)$ of the flat semigroup. Once the presentation is fixed, it is unique by the previous proof. 
  
 Consider the numerical semigroup $\calS$ generated by  $a_0=6$, $a_1=15$ and $a_2=20$. One can check that $\calS$ is flat at the first two generators, ie. $\widehat{s}_0=\widehat{s}_1=1$. Hence, if we present it  first  as  $G(2\cdot 3,5\cdot 3,10\cdot 2)$ then  this provides a canonical representative with Seifert invariants $Sf=(-6,5\times (2,1),10\times (3,1))$. On the other hand, if we present it as $G(5\cdot 3, 2\cdot 3, 4\cdot 5)$, one gets a graph with Seifert invariants $Sf=(-3,4\times(3,1),2\times (5,4))$. 
One can also choose a non-minimal presentation such as $G(2\cdot 3\cdot 5, 1\cdot 15,2\cdot 10,1\cdot 6)$. In this case the associated canonical representative is defined by the Seifert invariants $Sf=(-2, (2,1),2\times (3,1),(5,4))$. 
 
(b) \ If we run the procedure from Theorem \ref{thm_flat_rep} for the strongly flat semigroup $G(p,q)$ presented as $G(pq,p,q)$, then the resulted canonical representative is be defined by the Seifert invariants  $Sf=(-b_0, (p,\omega_1), (q, \omega_2))$ where $b_0,\omega_1,\omega_2$ satisfy $pqb_0-q\omega_1-p\omega_2=1$. The last identity immediately implies that $b_0=1$ and the corresponding graph is $\widetilde{\Gamma}_f$, see section \ref{s:gpq}. In other words, the canonical representative is $S^3$ with the corresponding Seifert structure.

 \end{remark}

\subsection{} In the sequel, we discuss some  properties of 
the canonical representatives of flat semigroups. 
 \begin{proposition}\label{lm_flat_ngor}    
    A canonical representative of a flat
    semigroup is numerically Gorenstein.
 \end{proposition}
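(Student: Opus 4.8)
The plan is to use the criterion for numerical Gorensteinness established in Lemma~\ref{lem:ngor}, namely that an SSR graph $\Gamma$ is numerically Gorenstein if and only if $\gamma\in\mathbb{Z}$ and $\gamma\equiv\omega_i'\pmod{\alpha_i}$ for every leg. For a canonical representative of a flat semigroup $\mathcal{S}=G(\alpha,s_1\widehat{\alpha}_1,\dots,s_n\widehat{\alpha}_n)$ we have by construction (see the proof of Theorem~\ref{thm_flat_rep}) that $\mathfrak{o}=\alpha|e|=1$, equivalently $|e|=1/\alpha$. Substituting $|e|=1/\alpha$ into the definition \eqref{eq:gamma} of $\gamma$, I first compute $\gamma=\alpha\big(n-1-\sum_i 1/\alpha_i\big)=(n-1)\alpha-\sum_i\widehat{\alpha}_i$, which is manifestly an integer since each $\widehat{\alpha}_i=\prod_{j\neq i}\alpha_j$ is an integer. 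This disposes of the first condition of Lemma~\ref{lem:ngor}.

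The main work is verifying the congruence $\gamma\equiv\omega_i'\pmod{\alpha_i}$ for each fixed leg type $i$. Here I would work with the canonical Seifert data $Sf=(-b_0,(s_j\times(\alpha_j,\omega_j))_{j})$, recalling that $\omega_i'$ is defined by $\omega_i\omega_i'\equiv 1\pmod{\alpha_i}$ with $0<\omega_i'<\alpha_i$. The strategy is to reduce the identity $|e|=1/\alpha$, i.e. $b_0-\sum_j s_j\omega_j/\alpha_j=1/\alpha$, modulo $\alpha_i$. Multiplying through by $\widehat{\alpha}_i=\alpha/\alpha_i$ and using that $\alpha_j\mid\widehat{\alpha}_i$ for every $j\neq i$ (so those terms vanish mod $\alpha_i$), I expect the relation to collapse to a congruence of the form $s_i\omega_i\widehat{\alpha}_i\equiv -1\pmod{\alpha_i}$, equivalently $s_i\omega_i\widehat{\alpha}_i\equiv\alpha_i-1$ in the appropriate sense. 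Independently, reducing the formula $\gamma=(n-1)\alpha-\sum_j\widehat{\alpha}_j$ modulo $\alpha_i$ kills every term except $-\widehat{\alpha}_i$ (since $\alpha_i\mid\alpha$ and $\alpha_i\mid\widehat{\alpha}_j$ for $j\neq i$), giving $\gamma\equiv-\widehat{\alpha}_i\pmod{\alpha_i}$.

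It then remains to match $-\widehat{\alpha}_i$ against $\omega_i'$ modulo $\alpha_i$. Combining the two reductions, I would show that $\omega_i\cdot(-\widehat{\alpha}_i)\equiv s_i\omega_i\widehat{\alpha}_i\cdot(\text{unit})\equiv 1\pmod{\alpha_i}$ after incorporating the constraint $\gcd(s_i,\alpha_i)=1$; more precisely, the relation $s_i\omega_i\widehat{\alpha}_i\equiv-1$ says that $-\widehat{\alpha}_i$ is the inverse of $s_i\omega_i$ modulo $\alpha_i$, and I expect $s_i$ to be absorbed so that $-\widehat{\alpha}_i$ is exactly the inverse of $\omega_i$, which is the defining property of $\omega_i'$. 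The anticipated main obstacle is bookkeeping the factor $s_i$: one must verify that the perturbation $\omega_i=(k_i\alpha_i+\widetilde{\omega}_i)/s_i$ introduced in Theorem~\ref{thm_flat_rep} interacts correctly with the congruence, ensuring that the $s_i$'s cancel and the inverse of $\omega_i$ (not of $s_i\omega_i$) is produced. I would therefore keep careful track of whether the correct quantity to invert modulo $\alpha_i$ is $\omega_i$ or $s_i\omega_i$, and use $\gcd(s_i,\alpha_i)=1$ at exactly that point. Once $-\widehat{\alpha}_i\equiv\omega_i'\pmod{\alpha_i}$ is confirmed, Lemma~\ref{lem:ngor} yields numerical Gorensteinness and the proof concludes.
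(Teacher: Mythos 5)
Your overall strategy is the same as the paper's: verify the two conditions of Lemma \ref{lem:ngor}, using $\mathfrak{o}=\alpha|e|=1$ both to compute $\gamma$ and to extract a congruence from the Euler-number identity. However, your computation of $\gamma$ is wrong, and the error is precisely what prevents your final matching from closing. In the definition \eqref{eq:gamma} the quantity $d$ is the total number of legs and the sum runs over all legs; for the canonical representative the leg of type $(\alpha_i,\omega_i)$ occurs with multiplicity $s_i$, so $d=\sum_i s_i$ and
\[
\gamma=\alpha\Big(d-2-\sum_{i=1}^n \frac{s_i}{\alpha_i}\Big)=(d-2)\alpha-\sum_{i=1}^n s_i\widehat{\alpha}_i,
\]
not $(n-1)\alpha-\sum_i\widehat{\alpha}_i$. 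Integrality survives your miscount, but the reduction modulo $\alpha_i$ does not: the correct formula gives $\gamma\equiv -s_i\widehat{\alpha}_i\pmod{\alpha_i}$, whereas you obtain $\gamma\equiv-\widehat{\alpha}_i\pmod{\alpha_i}$.

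This discrepancy is fatal to exactly the step you flag as the ``main obstacle.'' The identity $\alpha\big(b_0-\sum_j s_j\omega_j/\alpha_j\big)=1$ reduces, as you correctly anticipate, to $-s_i\omega_i\widehat{\alpha}_i\equiv 1\pmod{\alpha_i}$; hence it is $-s_i\widehat{\alpha}_i$, and not $-\widehat{\alpha}_i$, that is the inverse of $\omega_i$ modulo $\alpha_i$. There is no mechanism by which the $s_i$ gets ``absorbed'' or cancels: the factor $s_i$ must already be present in the reduction of $\gamma$, and it is, once $\gamma$ is computed with the leg multiplicities. A concrete check: for $G(6,15,20)$ with canonical representative $Sf=(-2,(2,1),2\times(3,1),(5,4))$ one has $\alpha=30$, $d=4$, $\gamma=2\cdot 30-(15+20+6)=19\equiv 1\equiv\omega_2'\pmod 3$, whereas your formula yields $29\equiv 2\pmod 3$ and your claimed congruence gives $-\widehat{\alpha}_2=-10\equiv 2\pmod 3$, both failing against $\omega_2'=1$. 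Once $\gamma$ is corrected, the remainder of your argument goes through and coincides with the paper's proof.
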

 \begin{proof}
    Let $\G$ be the canonical representative associated with a fixed presentation of 
    a flat semigroup and consider its Seifert invariants as in (\ref{eq:Sfcr}).  By Lemma \ref{lem:ngor} one has to check 
    that $\gamma\in \mathbb{Z}$ and $\gamma\equiv\omega_i'\ ({\rm mod}\ \alpha_i)$  for every $1\le i\le n$. 
    
  For the first, we recall that for a canonical representative we have $\al=\al_1\al_2\dots\alpha_n$ and $\oo=1$, hence $|e|=1/\al$. Then, from (\ref{eq:gamma}) we get 
     \begin{align*}
         \gamma&=\frac{1}{|e|}\cdot
         \Big(d-2-\sum_{\mbox{\tiny all} \ \al_j} \frac{1}{\al_j}\Big)
         =\al\cdot  
         \Big(d -2-\sum_{i=1}^{n}s_i \frac{1}{\al_i}\Big)=(d-2)\al-\sum_{i=1}^{n}s_i \widehat{\al}_i\in\mathbb{Z}.            
     \end{align*}
   
     Note that $\gamma\equiv\omega_i'\ ({\rm mod}\ \alpha_i)$ is equivalent with $ \w_i \gamma\equiv 1\,(\mathrm{mod }\,\al_i)$, see section \ref{ss:seifert}. The previous calculation gives the expression  
     \[
     \w_i \gamma= \al_i
 \Big((d-2)\widehat{\al}_i-\sum_{\substack{k=1\\k\neq i}}^n s_k\prod_{\substack{j=1\\j\notin \{i,k\}}}^n\al_j \Big)\w_i
 -s_i\widehat{\al}_i \w_i,   
     \]
     which tells us that  $\w_i \gamma \equiv -s_i\widehat{\al}_i\w_i\, (\mathrm{mod }\,{\al_i})$.
    On the other hand, the identity  $\oo=1$ reads as
    \[
    1=\al|e|= \al\Big(b_0-\sum_{k=1}^n s_k \frac{\w_k}{\al_k}\Big)= \al_i
    \Big(b_0\widehat{\al}_i-\sum_{\substack{k=1\\k\neq i}}^n s_k\w_k\prod_{\substack{j=1\\j\notin \{i,k\}}}^n\al_j \Big)
    -s_i\widehat{\al}_i\w_i,  
    \]
    which implies 
   $-s_i\widehat{\al}_i\w_i\equiv 1(\mathrm{mod }\,{\al_i})$. 
   Hence, $\w_i\gamma\equiv 1(\mathrm{mod }\,{\al_i})$ for any $i\in\{1,\dots,n\}$.
 \end{proof}

We can apply Proposition \ref{thm_ngor0_sym_and_frob} to deduce that a flat semigroup is symmetric. Furthermore, in this case the Frobenius number simplifies to $f_{\calS}=\alpha+\gamma$. This, expressed by the minimal set of generators reproves the formula (\ref{eq:Frobflat}) from \cite[Theorem 2.5]{art_rch}.

\begin{thm}\label{thm_flat_frob}
     If $\Se$ is a flat semigroup, minimally generated by $a_0,a_1,\dots,a_{n}$ ($n\geq 2$), then
    \begin{equation}
        \label{eq:frob_flat}
        f_{\calS}=\sum_{i=0}^{n}(q_i-1)a_i-\prod_{i=0}^{n} q_i,    
    \end{equation}
    where $q_i=\al_i=\gcd(a_0,\dots,a_{i-1},a_{i+1}\dots,a_{n})$.
 \end{thm}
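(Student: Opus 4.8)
The plan is to derive the Frobenius formula (\ref{eq:frob_flat}) from the geometric/topological expression $f_{\calS}=\alpha+\gamma$ established via Proposition \ref{thm_ngor0_sym_and_frob} together with Proposition \ref{lm_flat_ngor}. First I would fix a presentation of the flat semigroup in the form (\ref{eq:sg_flat}), namely $\calS=G(\al,s_1\z{\al}_1,\dots,s_n\z{\al}_n)$, where $\{\al_i\}_{i=1}^n$ are pairwise relatively prime, $\gcd(s_i,\al_i)=1$, and $\al=\al_1\cdots\al_n$. By Theorem \ref{thm_flat_rep} this semigroup admits a canonical representative with $\oo=1$, and by Proposition \ref{lm_flat_ngor} that representative is numerically Gorenstein. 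Hence the hypotheses of Proposition \ref{thm_ngor0_sym_and_frob} are met, giving at once that $\calS$ is symmetric and $f_{\calS}=\al+\gamma$.

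Next I would substitute the explicit value of $\gamma$ computed in the proof of Proposition \ref{lm_flat_ngor}. There it is shown that for a canonical representative
\[
\gamma=(d-2)\al-\sum_{i=1}^{n}s_i\z{\al}_i,
\]
where $d=\sum_{i=1}^n s_i$ is the total number of legs. Plugging this into $f_{\calS}=\al+\gamma$ yields
\[
f_{\calS}=\al+(d-2)\al-\sum_{i=1}^{n}s_i\z{\al}_i=(d-1)\al-\sum_{i=1}^{n}s_i\z{\al}_i.
\]
The task then becomes a purely combinatorial identification of this expression with the right-hand side of (\ref{eq:frob_flat}) written in terms of the minimal generators $a_0,\dots,a_n$.

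For the translation to the minimal generating set, I would use the dictionary from section \ref{ss:classsgp} applied to the presentation (\ref{eq:sg_flat}): one has $q_0=\gcd(\{s_j\}_j)$ and $q_i=\al_i$ for $i\geq 1$, with $a_0=\widehat{q}_0=\al$ and $a_i=s_i\z{\al}_i$ for $i\geq 1$ (after reducing to a minimal presentation, where $\widehat{s}_0=1$). In the symmetric, already-minimal situation one checks $q_0=1$, so that $\prod_{i=0}^n q_i=\prod_{i=1}^n\al_i=\al$ and $d=\sum_{i=0}^n(q_i-1)+ (n+1)$ can be matched against the counting of legs. The goal is to verify
\[
\sum_{i=0}^{n}(q_i-1)a_i-\prod_{i=0}^{n}q_i=(d-1)\al-\sum_{i=1}^{n}s_i\z{\al}_i,
\]
which reduces, after expanding $(q_i-1)a_i=(\al_i-1)s_i\z{\al}_i$ and using $\al_i\z{\al}_i=\al$, to an elementary rearrangement.

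The main obstacle I anticipate is bookkeeping: ensuring the passage from the chosen presentation (\ref{eq:sg_flat}), which may be non-minimal, to the minimal generating set is done consistently, so that the indices, the value of $d$, and the factor $q_0$ all agree. In particular one must argue that when $\calS$ is given by its minimal generators the quantity $s_i$ equals $q_0\widehat{s}_i$ collapses to $\widehat{s}_i$ with $q_0=1$, and that $d$ in the geometric computation (total number of legs $\sum_i s_i$) coincides with the combinatorial count implicit in $\sum_{i=0}^n(q_i-1)$. Once this correspondence is pinned down, the identity is a direct computation, and the result follows.
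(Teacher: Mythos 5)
Your overall route is exactly the paper's: combine Theorem \ref{thm_flat_rep}, Proposition \ref{lm_flat_ngor} and Proposition \ref{thm_ngor0_sym_and_frob} to get $f_{\calS}=\al+\gamma$, substitute $\gamma=(d-2)\al-\sum_i s_i\z{\al}_i$ with $d=\sum_i s_i$ the number of legs, and then rewrite $(d-1)\al-\sum_i s_i\z{\al}_i$ in terms of the minimal generators. The final identity you propose to verify is correct, so the plan does close.

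However, one of your bookkeeping claims is false and the step ``one checks $q_0=1$'' would fail: in the minimal situation one does \emph{not} have $q_0=1$, nor $\prod_{i=0}^n q_i=\al$. As you yourself note just before, $q_0=\gcd(\{s_j\}_j)$, and this can exceed $1$: for $\calS=G(6,15,20)$ (minimally generated and flat, cf.\ Remark \ref{rem:canrep}) one has $q_0=\gcd(15,20)=5$ and $\prod_{i=0}^n q_i=30\neq 6=\al$. Likewise $d=\sum_i s_i=15$ does not equal $\sum_{i=0}^n(q_i-1)+(n+1)=10$, so that ``matching against the counting of legs'' is also wrong (and unnecessary). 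The reason your target identity is nevertheless true is a cancellation you did not invoke: since $a_0=\al$ and $\prod_{i\geq 1}q_i=\al$, the $i=0$ contribution is $(q_0-1)a_0-\prod_{i=0}^{n}q_i=(q_0-1)\al-q_0\al=-\al$ \emph{independently of} $q_0$, and together with $\sum_{i\geq 1}(\al_i-1)s_i\z{\al}_i=d\al-\sum_i s_i\z{\al}_i$ this yields $(d-1)\al-\sum_i s_i\z{\al}_i$. This is precisely how the paper's computation handles the $i=0$ term (it writes $-\al=(\al_0-1)\al-\al_0\al$ with $\al_0=q_0$ arbitrary). So the fix is simply to drop the assertion $q_0=1$, keep $q_0$ general, and use this cancellation; with that correction your argument coincides with the paper's proof.
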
 
 \begin{proof}
 Using the previous discussions and the notation $\alpha:=\prod_{i=1}^n \al_i$, after a possible permutation of the generators, we can assume that
    $(a_0,a_1,\dots,a_{n})=(\al,s_1\z{\al}_1,\dots,s_n\z{\al}_n)$. Then one has
    \begin{align*}
     \gamma+\al&=\frac{1}{|e|}(d-2-\sum_{i=1}^n s_i\frac{1}{\al_i})+\al=\al(d-2-\sum_{i=1}^n s_i\frac{1}{\al_i}+1)=d\al-\sum_{i=1}^n s_i \widehat{\al}_i-\al\\ 
     &=\sum_{i=1}^n s_i \al_i\widehat{\al}_i-\sum_{i=1}^n s_i \widehat{\al}_i-\al=\sum_{i=1}^{n}(\al_i-1)s_i\widehat{\al}_i +(\al_{0}-1)\al-\al_{0}\al\\
     &=\sum_{i=0}^{n}(\al_i-1)a_i-\prod_{i=0}^{n} \al_i
    \end{align*}    
 \end{proof}

\subsection{The geometric canonical representatives}\label{s:ICIS}
In this section we construct explicit equations for weighted homogeneous surface singularities whose link (or minimal good dual resolution graph) is a canonical representative of a flat semigroup.
 
\subsubsection{\bf The universal abelian cover and the action of $H$}
\begin{lemma}\label{lem:H}
If $\Gamma$ is the the canonical representative of a flat semigroup $G(\al,s_1\z{\al}_1,\dots,s_n\z{\al}_n)$,  then  one has
$$H\simeq \oplus_{i=1}^n \Z_{\al_i}^{s_i-1}.$$
\end{lemma}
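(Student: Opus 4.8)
The plan is to compute $H=H_1(M,\Z)=L'/L$ directly from a presentation adapted to the multiplicity structure of the legs. Recall that for a Seifert rational homology sphere with normalized invariants $(-b_0;(\alpha_\ell,\omega_\ell))$ the group $H$ admits the standard presentation with one generator $x_\ell$ for each leg together with the class $h$ of the generic Seifert fibre, subject to relations of the form $\alpha_\ell x_\ell-\omega_\ell h=0$ for each leg and $\sum_\ell x_\ell - b_0 h=0$ (this is the abelianization of the Seifert fundamental group, equivalently the cokernel of the plumbing intersection matrix). For the canonical representative (\ref{eq:Sfcr}) the $d=\sum_i s_i$ legs come in groups: for each $i\in\{1,\dots,n\}$ there are $s_i$ legs carrying the same pair $(\alpha_i,\omega_i)$. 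Accordingly I label the generators $x_{i,k}$ with $1\le k\le s_i$, so the relations read $\alpha_i x_{i,k}-\omega_i h=0$ and $\sum_{i,k}x_{i,k}-b_0 h=0$.

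First I would change generators, replacing $x_{i,k}$ for $k\ge 2$ by the differences $y_{i,k}:=x_{i,k}-x_{i,1}$ and keeping $x_{i,1}$ ($i=1,\dots,n$) together with $h$. Subtracting the $k=1$ leg relation from the $k$-th one turns the leg relations into $\alpha_i x_{i,1}-\omega_i h=0$ together with $\alpha_i y_{i,k}=0$ for $2\le k\le s_i$, while the central relation becomes $\sum_i s_i x_{i,1}+\sum_{i,\,k\ge 2}y_{i,k}-b_0 h=0$. There are exactly $\sum_i(s_i-1)$ generators $y_{i,k}$, each of order dividing $\alpha_i$.

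The key step is to show that $H$ is generated by the $y_{i,k}$ alone. For this I pass to the quotient $\overline H:=H/\langle y_{i,k}\rangle$, obtained by setting all $y_{i,k}=0$; it is presented by $\overline x_1,\dots,\overline x_n,\overline h$ subject to $\alpha_i\overline x_i-\omega_i\overline h=0$ and $\sum_i s_i\overline x_i-b_0\overline h=0$. Its order is the absolute value of the determinant of the associated $(n+1)\times(n+1)$ matrix, which by the bordered-determinant formula equals $\big(\prod_i\alpha_i\big)\cdot\big|{-b_0}+\sum_i s_i\omega_i/\alpha_i\big|=\prod_i\alpha_i\cdot|e|$. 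Since $\Gamma$ is a canonical representative the $\alpha_i$ are pairwise coprime, whence $\alpha=\prod_i\alpha_i$, and therefore $\prod_i\alpha_i\cdot|e|=\alpha|e|=\oo=1$. Thus $\overline H=0$ and the $y_{i,k}$ generate $H$.

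Finally, each $y_{i,k}$ satisfies $\alpha_i y_{i,k}=0$, so sending the $(i,k)$-th standard generator of $\bigoplus_{i=1}^n\Z_{\alpha_i}^{s_i-1}$ to $y_{i,k}$ defines a surjective homomorphism onto $H$. By (\ref{eq:sei2}) the order of $H$ is $\mathfrak{h}=\big(\prod_i\alpha_i^{s_i}\big)\,|e|=\prod_i\alpha_i^{s_i}/\alpha=\prod_i\alpha_i^{s_i-1}$, which matches the order of the source; a surjection of finite abelian groups of equal order is an isomorphism, giving $H\simeq\bigoplus_{i=1}^n\Z_{\alpha_i}^{s_i-1}$. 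The main obstacle I anticipate is purely bookkeeping: pinning down the sign/orientation conventions in the Seifert (plumbing) presentation so that the leg and central relations, and hence the bordered determinant, come out consistent with $\oo=\alpha|e|=1$; once the convention is fixed, every computation above is routine.
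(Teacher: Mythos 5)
Your proof is correct, and it closes the argument by a genuinely different route than the paper's. Both proofs start from a presentation of $H$ with one generator per leg plus a central generator, and both exploit $\mathfrak{o}=1$; but from there they diverge. The paper works with Neumann's presentation in the dual classes $g_0=[E^*_0]$, $g_{j(i)}=[E^*_{j(i)}]$: since $\mathfrak{o}$ is by definition the order of $g_0$ in $H$, the hypothesis kills $g_0$ outright, and the surviving relation $\sum_{i,j}\omega_i g_{j(i)}=0$ is then split into the per-block relations $\sum_j g_{j(i)}=0$ by a gcd argument (the order of $\sum_j g_{j(i)}$ divides both $\widehat{\alpha}_i\omega_i$ and $\alpha_i$, which are coprime); the group is read off from the resulting explicit presentation (\ref{eq:Hrel}). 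You instead pass to the quotient by the differences $y_{i,k}$, show it is trivial via the bordered determinant $\prod_i\alpha_i\,|e|=\alpha|e|=\mathfrak{o}=1$, and conclude with the order count $\mathfrak{h}=\prod_i\alpha_i^{s_i-1}$ from (\ref{eq:sei2}) plus the fact that a surjection of finite abelian groups of equal order is an isomorphism. Your version is more mechanical and avoids the gcd manipulation; what the paper's version buys is the explicit relation $\sum_j g_{j(i)}=0$ among the classes $[E^*_{j(i)}]$, which is not a by-product of your argument but is exactly what is used in the next subsection to determine the $H$-invariant monomials and the equations of $(X,0)$. The only loose end in your write-up is the one you already flag: fixing the sign conventions so that your Seifert presentation of $H_1(M)$ has relation matrix of determinant $\pm\prod_{\mathrm{legs}}\alpha\cdot e$, consistent with $\mathfrak{h}$ and $\mathfrak{o}$ as in (\ref{eq:sei2}); once that is pinned down, every step is sound.
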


\begin{proof}
Recall that the canonical representative $\Gamma$ associated with the given  presentation  is defined by the Seifert invariants $Sf=(-b_0, s_1\times (\al_i,\w_i),\dots,s_n\times (\al_n,\w_n))$ where $b_0$ and $\w_i$ are contructed by Theorem \ref{thm_flat_rep}.

Let $E_0$ be the base element associated with the central node of $\Gamma$, and for simplicity, we will denote by $E_{j(i)}$ ($i\in\{1,\dots,n\}$, $j\in\{1,\dots,s_i\}$) the base elements associated with the end-vertices. The classes in $H= L'/L$ of the corresponding dual base elements will be denoted by $g_0:=[E^*_0]$ and $g_{j(i)}:=[E^*_{j(i)}]$. Then the group $H$ can be presented as 
%\begin{equation}\label{eq:sei1}
$$H=\big\langle \, g_0,\{g_{j(i)}\}_{i,j}\,|\, b_0\cdot g_0=\sum_{i=1}^n\sum_{j=1}^{s_i} \omega_i\cdot g_{j(i)};
\, g_0=\alpha_i\cdot g_{j(i)}\ (1\leq i\leq n, \ 1\leq j\leq s_i)\big\rangle,$$
%\end{equation}
cf. Neumann \cite{neumann.abel}. 

Since $\oo=1$, one gets $g_0=0$ and the relations simplify to  $\sum_{i=1}^n\sum_{j=1}^{s_i} \omega_i\cdot g_{j(i)}=0$ and $\alpha_i\cdot g_{j(i)}=0$. From the former relation we deduce that $\z{\al_i}\w_i\cdot \sum_{j=1}^{s_i}g_{j(i)}=0$, while the later gives $\al_i\cdot \sum_{j=1}^{s_i}g_{j(i)}=0$. These imply that the order of $\sum_{j=1}^{s_i}g_{j(i)}$ divides both $\z{\al_i}\w_i$ and $\al_i$. Since $\{\al_i\}_i$ are pairwise relatively prime, this is possible if and only if $\sum_{j=1}^{s_i}g_{j(i)}=0$ for any $i\in\{1,\dots, n\}$. Therefore, we get 
\begin{equation}\label{eq:Hrel}
H\simeq \oplus_{i=1}^n\langle \, g_{1(i)},\dots,g_{s_i(i)}\,|\, \alpha_i\cdot g_{j(i)}=0;
\, \sum_{j(i)=1}^{s_i}g_{j(i)}=0\ (1\leq j\leq s_i)\rangle\simeq \oplus_{i=1}^n \Z_{\al_i}^{s_i-1}.
\end{equation}
\end{proof}

Now, let $(X,0)$ be a weighted homogeneous surface singularity whose minimal good dual resolution graph is $\Gamma$. Then there exists the universal abelian cover $(X^{ab},0)\to (X,0)$ that induces an unramified Galois covering $X^{ab}\setminus\{0\} \to X\setminus\{0\}$ with Galois group $H\simeq H_1(M,\Z)$ ($M$ is the link of $(X,0)$), ie. $(X,0)=(X^{ab}/H,0)$. Furthermore, by a theorem of Neumann \cite{neumann.abel} (see also \cite[5.1.40]{nembook}) this universal abelian cover $(X^{ab},0)$ is a Brieskorn complete intersection singularity, which can be given by the equations   
\begin{equation}\label{eq:Xab}
\{z=(z_{j(i)})\in \C^d \ | \ f_k:=\sum_{i=1}^n \sum_{j(i)=1}^{s_i} c^k_{j(i)} z_{j(i)}^{\al_i}=0, \ \ k=1,\dots, d-2\},
\end{equation}
where the $(d-2)\times d$ matrix $(c^k_{j(i)})$ has full rank. Here $d:=s_1+\dots +s_n$ and the variable $z_{j(i)}$ is assigned to the end-vertex $E_{j(i)}$ for any $i=1,\dots,n$, $j=1,\dots, s_i$. In the followings, we define the $H$-action on $X^{ab}$.

Consider the Pontrjagin dual $\hat{H}:=\Hom(H,S^1)$ of $H$ and let $\theta:H\to \hat{H}, [l']\mapsto e^{2\pi i (l',\cdot)}$ be the isomorphism between $H$ and $\hat{H}$. The $H$ acts on $\C^d$ by the diagonal action $\mathrm{diag}(\chi_{j(i)})_{i,j}:H\to \mathrm{Diag}(d)\subset GL_d(\C)$, where $\chi_{j(i)}:=e^{2\pi i (E^*_{j(i)},\cdot)}\in \hat{H}$ is the character corresponding to $[E^*_{j(i)}]$. Since the equations $f_k$ are eigenvectors we obtain an induced action on $X^{ab}$ too. By \cite[Theorem 2.1]{neumann.abel} this action is free off the origin and the orbit space $(X^{ab}/H,0)\simeq (X,0)$ (for the right choice of $c^k_{j(i)}$). 

\begin{remark}
Note that, although the complex structure of $(X^{ab},0)$ depends on the choice of the matrix $(c^k_{j(i)})$, its link $M^{ab}$ (as a Seifert $3$-manifold) is independent. Furthermore, by the result of \cite[Thm. 7.2]{JNSeifert} (see also \cite[5.1.17]{nembook}) one can show that  in our case the resolution graph $\Gamma^{ab}$ of $(X^{ab},0)$ inherits from $\Gamma$ the following structure (in the sequel all invariants of $M^{ab}$ will be marked by $\star^{ab}$):
% is determined by the Seifert invariants $(b_0^{ab},g; s^{ab}_1\times(\alpha^{ab}_1,\w^{ab}_1),\dots,s^{ab}_d\times(\alpha^{ab}_n,\w^{ab}_1))$ where 
\begin{itemize}
 \item If $s_i=1$ then the leg in $\Gamma$ with $(\al_i,\w_i)$ induces a leg in $\Gamma^{ab}$ with $\al^{ab}_i=\al_i$ and this will appear with multiplicity $s^{ab}_i=\prod_{l=1}^n\al_{l}^{s_l-1}$.
 \item If $s_i>1$ then for any $j(i)\in\{1,\dots,s_i$\} one gets $\al^{ab}_{j(i)}=1$ and $s^{ab}_{j(i)}=\al_i^{s_i-2}\prod_{l\neq i}\al_l^{s_l-1}$. Note that these legs completely dissapear since $\al^{ab}_{j(i)}=1$, however their multiplicity contribute to the genus of the central fiber in $M^{ab}$. 
 \item More precisely, one gets $g^{ab}=1+\frac{1}{2}\prod_{l=1}^n\al_l^{s_l-1}(\sum_{i,s_i>1}s_i(1-\frac{1}{\al_i})-2)$. Furthermore, $b_0^{ab}$ and $\w^{ab}_i$ can be determined by the formulae: $\w^{ab}_i\z{\al}_i\equiv -1 \,(\mathrm{mod }\, \al_i)$ and $-e^{ab}=\prod_{l=1}^n \al_l^{s_l-2}$. 
\end{itemize}
We emphasize that $M^{ab}$ is a rational homology sphere if and only if $s_i=1$ for any $i$. Or, equivalently, $\Gamma$ is the canonical representative of a strongly flat semigroup by Remark \ref{rem:CRSF}. 
\end{remark}

\subsubsection{\bf The equations of $(X,0)$}

Now we look at the induced action on the polynomial ring $R=\C[(z_{j(i)})_{i,j}]$ and the invariant subring $R^H$ of $R$. By considering the generators of the invariant monomials and their relations in $R^H$, in the sequel we will study  the possible equations for the analytic types of $(X,0)$.

Since the characters $\chi_{j(i)}$ generate $\hat{H}$, they satisfy the relations from (\ref{eq:Hrel}), namely one has 
\begin{equation}\label{eq:chrel}
\chi_{j(i)}^{\al_i}=1 \ \ \mbox{and} \ \ \prod_{j(i)=1}^{s_i}\chi_{j(i)}=1 \ \ \mbox{for any} \ i=1,\dots,n.
\end{equation}
A monomial $z^{a}:=\prod_{i}\prod_{j(i)} z_{j(i)}^{a_{j(i)}}$ is in $R^H$ if and only if $\prod_{j(i)}\chi_{j(i)}^{a_{j(i)}}=1$ for any $i$. By (\ref{eq:chrel}) one deduces that the monomials $z^{\al_i}_{j(i)}$ and $\prod_{j(i)=1}^{s_i}z_{j(i)}$ are in $R^H$ for any $i$ and $j(i)$. If there are any other generators, divided them with the already listed invariant monomials, they must have the form of $\prod_{j(i)=1}^{s_i}z^{a_{j(i)}}_{j(i)}$ for some $i$ with the exponents $(a_{j(i)})=(a_1,\dots,a_{s_i-1},0)$ where $0\leq a_{j(i)}<\al_i$. We claim that in this case $a_{j(i)}=0$ for $j(i)=1,\dots,s_i-1$ as well. 
Indeed, one has the identity $\prod_{j(i)=1}^{s_i-1}\chi^{a_{j(i)}}_{j(i)}=1$ which implies that $\sum_{j(i)=1}^{s_i-1}a_{j(i)}E^*_{j(i)}\in L$. In particular, $\sum_{j(i)=1}^{s_i-1}a_{j(i)}g_{j(i)}=0 \in H$. Then by the isomorphism from Lemma  \ref{lem:H} and the assumptions on $a_{j(i)}$ follows that $a_{j(i)}=0$. 

In summary, the generators associated with $i$ are as follows: if $s_i=1$ then $z_i:=z_{j(i)}$ is a generator; in the case $s_i>0$ we get $w_{j(i)}:=z^{\al_i}_{j(i)}$ for $j(i)=1,\dots,s_i$ and $w_i:=\prod_{j(i)=1}^{s_i} z_{j(i)}$. Then $R^H$ can be presented as $\C[z_i,w_i,w_{j(i)}]/I$ where the ideal $I$ is given by the relations
\begin{equation}\label{eq:X}
\begin{cases}
\sum_{i,s_i=1} c^k_{i} z_{i}^{\al_i}+\sum_{i,s_i\neq 1}\sum_{j(i)}c^k_{j(i)} w_{j(i)}=0, \ \ \ & k=1,\dots,d-2;\\
w_i^{\al_i}=\prod_{j(i)=1}^{s_i} w_{j(i)}  \ & \mbox{for every} \ i \ \mbox{with} \ s_i>1,
\end{cases}
\end{equation}
providing us the equations for the possible analytic types of $(X,0)$. 
Note that the first type of relations is coming from the equations (\ref{eq:Xab}) of $X^{ab}$, the second type of equations is given by the relations of the monoid algebra $\C[M_i]$ associated with the affine monoid $M_i=\langle (t_1:=\al_i,0,\dots,0), t_2:=(0,\al_i,0,\dots,0),\dots, (0,\dots,0,\al_i),t_{s_i+1}:=(1,1,\dots,1)\rangle\subset \Z^{s_i}_{\geq 0}$ for every $i$ with $s_i>1$. In other words, for a fixed $i$ the corresponding relations generates the toric ideal $I_i$ where $\C[M_i]\simeq \C[w_{j(i)},w_i]/I_i$. In fact, in our case one has $I_i=(w_i^{\al_i}-\prod_{j(i)=1}^{s_i} w_{j(i)})$. 
% Indeed, the generators of the relations in terms of the monoid generators have the form of $\sum_{l=1}^{s_{i}+1}a_l t_l=\sum_{l=1}^{s_{i}+1}b_l t_l$ for some $a_l,b_l\geq 0$. This implies that $(a_l-b_l)\al_i=b_{s_{i}+1}-a_{s_i+1}$ for any $l=1,\dots,s_i$, hence we may assume that $b_l=0$ ($l=1,\dots, s_i$) and $a_{s_i+1}=0$. This way we get $a_l\al_i=b_{s_i+1}$ which gives us the only generator $\sum_{l=1}^{s_i}t_l=\al_it_{s_{i}+1}$.   
Indeed, the generators of the relations in terms of the monoid generators have the form of $\sum_{l\in I}a_l t_l=\sum_{m\in J}b_m t_m+b_{s_i+1}t_{s_i+1}$ for some $a_l,b_m,b_{s_i+1}\geq 0$ where $I,J\subset \{1,\dots,s_i\}$ and  $I\cap J=\emptyset$. Looking at the coordinates, this implies that  $b_m=0$, $I=\{1,\dots,s_i\}$ and $a_l\al_i=b_{s_i+1}$, which gives us the only generator $\sum_{l=1}^{s_i}t_l=\al_it_{s_{i}+1}$.

Note that many of the variables $w_{j(i)}$ can be eliminated and the number of equations in (\ref{eq:X}) can be reduced. In the followings, we will distinguish three cases depending on the number $K:=\#\{i \ : \ s_i=1\}$ of legs with multiplicity $1$.  
\vspace{0.3cm}

{\bf I.} \ $K=0$ \ \  In this case we have the linear system $\{\sum_{i,s_i\neq 1}\sum_{j(i)}c^k_{j(i)} w_{j(i)}=0\}_{k=1,\dots,d-2}$. Associated with two fixed, not necesarilly different indices $i_1$ and $i_2$ we choose $j_0(i_1)\in\{1,\dots,s_{i_1}\}$ and $j_0(i_2)\in \{1,\dots s_{i_2}\}$. For simplicity, we set the notations $x:=w_{j_0(i_1)}$ and $y:=w_{j_0(i_2)}$. Then, since the coefficients $\{c^k_{j(i)}\}$ are generic (ie. the matrix of the system has rank $d-2$), the other variables can be expressed linearly in terms of $x$ and $y$. Therefore, we get that $(X,0)\subset (\C^{n+2},0)$ is defined by the following equations
\begin{equation}
w_i^{\al_i}=\prod_{j(i)=1}^{s_i} (a_{j(i)}x + b_{j(i)} y )  \  \ \ \ i=1,\dots n,
\end{equation}
where $a_{j_0(i_1)}=b_{j_0(i_2)}=1$, $a_{j_0(i_2)}=b_{j_0(i_1)}=0$ and the other $a_{j(i)}, b_{j(i)}\in \C$ are generic coefficients.

{\bf II.} \ $K=1$ \ \ We simply write $z$ for the variable associated with the only $i$ with $s_i=1$, set also $\al:=\al_i$. On the other hand, we fix $i_0\in\{i \ : s_i>1\}$ and one of its associated variables will be denoted by $x:=w_{j_0(i_0)}$. The other variables are linearly expressed with $z^\al$ and $x$, hence the equations of $(X,0)\subset \C^{n+1}$ are
\begin{equation}
w_i^{\al_i}=\prod_{j(i)=1}^{s_i} (a_{j(i)}z^{\al} + b_{j(i)} x )  \  \ \ \ i\in\{i \ : \ s_i>1\},
\end{equation}
where $a_{j_0(i_0)}=0$, $b_{j_0(i_0)}=1$ and the others are generic.

{\bf III.} \ $K>1$ \ \ In the last case we choose $i_1,i_2$ with $s_{i_1}=s_{i_2}=1$ and denote their associated variables by $x:=z_{i_1}$ and $y:=z_{i_2}$. Then, one can express  $z_i^{\al_i}$ for $i\in \{i \ : \ s_i=1\}$ and $i\neq i_1,i_2$, as well as the variables $w_{j(i)}$ for every $i\in \{i \ : \ s_i>1\}$ and $j(i)$, linearly in terms of $x^{\al_{i_1}}$ and $y^{\al_{i_2}}$. Therefore we get that $(X,0)\subset (\C^n,0)$ is defined by the following equations
\begin{equation}
\begin{cases}
z_i^{\al_i}=p_ix^{\al_{i_1}}+q_i y^{\al_{i_2}} \ \ \ \ & i\in\{i \ : \ s_i=1\}\setminus \{i_1,i_2\};\\
w_i^{\al_i}=\prod_{j(i)=1}^{s_i} (a_{j(i)}x^{\al_{i_1}} + b_{j(i)} y^{\al_{i_2}} )  \  \ \ \ & i\in\{i \ : \ s_i>1\},
\end{cases}
\end{equation}
where $p_i,q_i, a_{j(i)}, b_{j(i)}$ are generic coefficients.

\begin{remark}
We emphasize that in all three cases the representatives $(X,0)$ are complete intersections. In particular, they are Gorenstein and Proposition \ref{lm_flat_ngor} follows automatically.  
\end{remark}

\begin{example} 
Consider the flat semigroup $G(6,15,20)$ discussed in Remark \ref{rem:canrep}. We look at its (last) canonical representative, which is defined by the Seifert invariants $Sf=(-2,(2,1), 2\times (3,1), (5,4))$. Then, by case III. of the above construction, we get a family of suspension hypersurface singularities defined by $(X_{a_i,b_i}=\{f(x,y,z)=(a_1x^2+b_1y^5)(a_2x^2+b_2y^5)+z^3=0\},0)\subset (\mathbb{C}^3,0)$ where $a_i,b_i\in \C$ are generic coefficients. In particular, if we consider the hypersurface singularity defined by the equation $x^4+y^{10}+z^3=0$ for example, one can check that its associated semigroup is $G(6,15,20)$. The other canonical representatives considered in Remark \ref{rem:canrep} provide (eg. by case I.) other families of complete intersections with the same associated semigroup.    
\end{example}

\section{The characterization of representable semigroups} \label{s:ch}

\subsection{Perturbation of the Seifert invariants and the characterization}

In this section we prove that representable semigroups are exactly the  quotients of  flat semigroups. 

In order to prove this result, two technical steps are needed: first we show that every Seifert invariant $(\al,\w)$ can be 
\textquoteleft perturbed\textquoteright\ without affecting the quasi-linear function $N(\ell)$; the  second step claims that the Seifert invariants $(-b_0,s_1\times(\al_1,\w_1),\dots,s_{n}\times(\al_{n},\w_{n}))$ can be changed to  $(-b_0,s_1\times(\al'_1,\w'_1),\dots,s_{n}\times(\al'_{n},\w'_{n}))$ in such a way that the associated semigroup remain stable, while $\al'_1,\dots\al'_n$ will be pairwise relatively prime and $\gcd(\al'_i,s_{i})=1$.

We start our discussion with the characterization of the later case.

\begin{thm}
     \label{flat:divisor}
     Let $\G$ be an SSR  graph defined by the Seifert invariants   
     \[
         Sf=\big(-b_0,s_1\times(\al_1,\w_1),\dots,s_n\times(\al_n,\w_n)
         \big) .    
     \]
     If the numbers $\al_i\geq 2$  are pairwise relatively prime integers and $\gcd(\al_i,s_i)=1$ for every  $i$,
     then its associated semigroup is a quotient of a flat semigroup.
     In fact,
     $\Se_{\G}=G(\al,s_1 \widehat{\al}_1,\dots,s_n\widehat{\al}_n)/\oo$.
 \end{thm}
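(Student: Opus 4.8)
The plan is to prove the equality $\Se_{\G}=G(\al,s_1 \widehat{\al}_1,\dots,s_n\widehat{\al}_n)/\oo$ by combining the two-sided bound from Theorem \ref{thm:numbound} with the flatness hypotheses. Recall that the upper inclusion in \eqref{eq:incl} states $\Se_{\G}\subset G(\al,s_1 \al^*_1,\dots,s_n \al^*_n)/\oo$, and under the assumption that the $\al_i$ are pairwise relatively prime we have the identification $\al^*_i=\al/\al_i=\prod_{j\neq i}\al_j=\widehat{\al}_i=\al_{-i}$, so the two bounds in \eqref{eq:incl} become the \emph{same} monoid $G(\al,s_1 \widehat{\al}_1,\dots,s_n\widehat{\al}_n)$ (modulo the $\oo$ on one side). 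The strategy, then, is to show that the lower and upper bounds force an exact equality once we divide by $\oo$.

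First I would record that, because $\gcd(\al_i,s_i)=1$ and the $\al_i$ are pairwise coprime, the monoid $G(\al,s_1\widehat{\al}_1,\dots,s_n\widehat{\al}_n)$ is exactly the flat semigroup whose presentation appears in \eqref{eq:sg_flat}; this is the content of the arithmetical characterization of flat semigroups quoted earlier in the excerpt. Thus the right-hand side of the claimed identity genuinely is a quotient \emph{of a flat semigroup} by $\oo$, which is what the statement asserts. The substance of the proof is therefore to upgrade the upper inclusion $\Se_{\G}\subset G(\al,s_1\widehat{\al}_1,\dots,s_n\widehat{\al}_n)/\oo$ to an equality.

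For the reverse inclusion, the natural approach is to pass to the Veronese / quotient picture used in Lemma \ref{lem:quotientrep} and Remark \ref{rem:quotient}. Consider the auxiliary graph $\G'$ obtained by replacing each $\w_i$ with the normalized Seifert data of a \emph{strongly flat} representative having $\oo=1$ built on the same $\al_i$'s and the same multiplicities $s_i$; by the construction in the proof of Theorem \ref{thm_flat_rep} such a canonical representative $\G'$ exists and satisfies $\Se_{\G'}=G(\al,s_1\widehat{\al}_1,\dots,s_n\widehat{\al}_n)$. The key point is that $\G$ and $\G'$ share the same $\al_i$, $s_i$ and the same orbifold invariant $\al$, and their quasi-linear functions are related by an explicit $\ell\mapsto \oo\ell$ rescaling of the argument: concretely, one checks that $N_{\G'}(\oo\ell)$ and $N_{\G}(\ell)$ have the same sign, so that $\ell\in\Se_{\G}\iff \oo\ell\in\Se_{\G'}$. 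This is precisely the statement $\Se_{\G}=\Se_{\G'}/\oo=G(\al,s_1\widehat{\al}_1,\dots,s_n\widehat{\al}_n)/\oo$. I expect the main obstacle to be this sign-matching step: one must show that the fractional remainder terms $\al_i f(\w_i\ell/\al_i)$ appearing in \eqref{eq:l_as_N} combine, after the pairwise-coprimality and $\gcd(\al_i,s_i)=1$ reductions, to vanish exactly when $N(\ell)=0$ forces membership, so that no extra gaps are introduced beyond those detected by the monoid $G(\al,s_1\widehat{\al}_1,\dots,s_n\widehat{\al}_n)/\oo$. Handling the boundary case $N(\ell)=0$ versus $N(\ell)>0$ carefully — i.e. verifying that equality in the upper bound of \eqref{eq:l_as_N} is achieved precisely on $\Se_\G$ — will be where the real care is needed.
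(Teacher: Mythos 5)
Your overall strategy --- relating $\G$ to the canonical representative $\G'$ of the flat semigroup $G(\al,s_1\widehat{\al}_1,\dots,s_n\widehat{\al}_n)$ and reading off $\Se_\G=\Se_{\G'}/\oo$ from a rescaling $\ell\mapsto\oo\ell$ of the quasi-linear function --- is exactly the paper's strategy. But the proposal has a genuine gap precisely at the step you yourself flag as "the main obstacle": you assert that $N_{\G'}(\oo\ell)$ and $N_\G(\ell)$ have the same sign, and you suggest proving this by analysing the fractional remainder terms in \eqref{eq:l_as_N}. That route does not work: the identity \eqref{eq:l_as_N} gives you the implication $N(\ell)\ge 0\Rightarrow \oo\ell\in G(\al,s_1\widehat{\al}_1,\dots,s_n\widehat{\al}_n)$ (the upper bound you already have from Theorem \ref{thm:numbound}), but it gives no handle on the converse, and in particular no comparison between $N_\G$ and $N_{\G'}$, since $\G'$ is built from the abstract presentation of the flat semigroup with no reference to the original $\w_i$.

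What is actually true --- and what the paper proves --- is the \emph{exact} identity $N_{\G'}(\oo\ell)=N_\G(\ell)$, and it comes from arithmetic relating the two sets of Seifert invariants, not from \eqref{eq:l_as_N}. The missing ingredients are: (i) the observation that $\gcd(\al_i,s_i)=1$ forces $\gcd(\al_i,\oo)=1$ (reduce $\oo=\al b_0-\sum_j s_j\w_j\widehat{\al}_j$ modulo $\gcd(\al_i,\oo)$); (ii) using this coprimality, one solves $k_i\al_i+\w_i\equiv 0\ (\mathrm{mod}\ \oo)$ with $0\le k_i<\oo$ and sets $\tilde{\w}_i=(k_i\al_i+\w_i)/\oo$ and $\tilde{b}_0=(b_0+\sum_i s_ik_i)/\oo$, checking that the latter is an integer; the resulting graph has $\tilde{\oo}=1$ and hence \emph{is} the canonical representative $\G'$ (by uniqueness of the solution of the Diophantine equation, cf.\ Remark \ref{rem:canrep}(a)). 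Then $\oo\tilde{\w}_i=k_i\al_i+\w_i$ gives $\lceil \tilde{\w}_i\oo\ell/\al_i\rceil=k_i\ell+\lceil\w_i\ell/\al_i\rceil$ and $\tilde{b}_0\oo\ell=b_0\ell+\sum_i s_ik_i\ell$, whence $N_{\G'}(\oo\ell)=N_\G(\ell)$ on the nose and no delicate boundary analysis at $N(\ell)=0$ is needed. Without steps (i) and (ii) your $\G'$ is just some graph representing the flat semigroup, and the claimed sign-matching is unsupported; with them, the proof closes and coincides with the paper's.
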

 \begin{proof}
 First of all we observe that $\gcd(\al_i,\oo)\neq 1$ implies $\gcd(\al_i,s_i)\neq 1$, hence by the assumptions we must have $\gcd(\al_i,\oo)=1$ for every $i\in\{1,\dots,n\}$. Indeed, for a fixed $i$ the expression $\oo=\al b_0-\sum_{j\neq i}s_j\w_j\widehat{\al}_j - s_i\w_i\widehat{\al}_i$ implies the identity $s_i\w_i\widehat{\al}_i\equiv 0\,(\mathrm{mod }\,{\gcd(\al_i,\oo)})$, which simplifies to $s_i\w_i\equiv 0\,(\mathrm{mod }\,{\gcd(\al_i,\oo)})$, since $\gcd(\al_i,\al_j)=1$ for $j\neq i$. Then, by multiplying the last congruence with $\w_i'$ and using (\ref{eq:w'}) yields that $s_i\equiv 0\,(\mathrm{mod }\,{\gcd(\al_i,\oo)})$, which supports our claim.

    Now, using similar ideas as in the proof of the Theorem \ref{thm_flat_rep} we consider the non-negative integers $k_1,\dots,k_n$ as the solutions of the equations
     \begin{equation}\label{eq:cong1}
      k_1 \al_1+\w_1\equiv
     \dots\equiv k_n \al_n+\w_n\equiv 0\,(\mathrm{mod }\,{\oo}),    
     \end{equation}
     with $0\leq k_i< \oo$ for every $1\le i\le n$. 
     These, on one hand, allow us to define the numbers 
     \[
        \tilde{\w}_i=\frac{1}{\oo} (k_i\al_i+\w_i)\quad
        \mbox{ for every }  1\le i\le n,
     \]
     which satisfy $\gcd(\tilde{\w}_i,\al_i)=1$ and $0<\tilde{\w}_i<\al_i$. 
     On the other hand, they imply that $b_0+\sum_{i=1}^ns_ik_i \equiv 0\,(\mathrm{mod }\,{\oo})$ too, hence one can define the new central  decoration as $\tilde{b}_0=(b_0+\sum_{i=1}^ns_ik_i)/\oo$. Indeed, from the equations (\ref{eq:cong1}), one gets $\sum_{i=1}^n\al s_i k_i +s_i\w_i\widehat{\al}_i\equiv 0\,(\mathrm{mod }\,{\oo})$. Furthermore,  the definition of $\oo$ reads as $\oo=\al b_0-\sum_i s_i\w_i\widehat{\al}_i$ which,  applied to the previous equation,  gives $\al(b_0+\sum_i s_i k_i) \equiv 0\,(\mathrm{mod }\,{\oo})$.  Since $\gcd(\al,\oo)=1$ we deduce that  $(b_0+\sum_i s_i k_i) \equiv 0\,(\mathrm{mod }\,{\oo})$.
     
     We consider the SSR graph $\tilde{\G}$ defined by the Seifert invariants
     \[
       Sf=\big(-\tilde{b}_0,s_1\times(\al_1,\tilde{\w}_1),\dots,s_n\times(\al_n,\tilde{\w}_n)
    \big).     
    \]
    In the followings, any of the numerical data associated with $\tilde{\G}$
    will be distinguished by the \textquoteleft tilde\textquoteright\, notation, 
    eg.  $\tilde{e}$ will stand for the orbifold Euler number of $\tilde{\G}$. The first observation is that $\tilde{e}=e/o<0$, hence $\tilde{\G}$ is negative definite. Furthermore, $\tilde{\al}=\al$ and one implies  that  $\tilde{\oo}=1$. Then, by the assumptions of the theorem we conclude that $\tilde{\G}$ is the canonical representative of the flat semigroup $\calS_{\tilde{\Gamma}}=G(\al,s_1 \widehat{\al}_1,\dots, s_n\widehat{\al}_n)$. 
    
    On the other hand, if we look at the quasi-linear function  $\tilde{N}(\ell)=\tilde{b}_0 \ell -\sum_{i=1}^n s_i\lceil\tilde{\w}_i \ell/\al_i\rceil$ associated with $\calS_{\tilde{\Gamma}}$, one can see that 
    $ \tilde{N}^{(\oo)}(\ell)=\tilde{N}(\oo\ell)=N(\ell)$. Hence $\Se_{\G}=\Se_{\tilde{\G}}/\oo$ which finishes the proof.

%      \[
%      \tilde{\oo}=\al\left(\tilde{b}_0 -\sum_{i=1}^n s_i\frac{\tilde{w}_1}{\al_1}\right)
%      = \frac{\al}{\oo}\left(b_0+\sum_{i=1}^n s_i k_i
%      - \sum_{i=1}^n s_i\left(k_i+\frac{\w_i}{\al_i}\right)\right) =\frac{\al}{\oo}|e|=1.
%      \]
     
%      the function $\tilde{N}(l)=\tilde{b}_0 \ell -\sum_{i=1}^n s_i\lceil\tilde{\w}_i \ell/\al_i\rceil$. 
%      By  elementary  arguments and  computations 
%      we get that  
%      
%   
%     Our first observation is that $\tilde{\al}=\al$,
%     hence we can conclude the following identities
%      \[
%      \tilde{\oo}=\al\left(\tilde{b}_0 -\sum_{i=1}^n s_i\frac{\tilde{w}_1}{\al_1}\right)
%      = \frac{\al}{\oo}\left(b_0+\sum_{i=1}^n s_i k_i
%      - \sum_{i=1}^n s_i\left(k_i+\frac{\w_i}{\al_i}\right)\right) =\frac{\al}{\oo}|e|=1.
%      \]
%    Since $\oo=1$ and the numbers $\{\al_i\}_{i=1}^{n}$ are pairwise
%     relatively primes, we have that the  graph $\tilde{\G}$
%     is the canonical representant of the flat semigroup $\Se:=G(\al,s_1 \widehat{\al}_1,\dots, s_n\widehat{\al}_n)$,
%     hence $\Se_{\tilde{\G}}=\Se$.
% 
%     From the construction of $\tilde{N}$ we have that  $N=\tilde{N}^{(\oo)}$,
%      hence $\Se_{\G}=\oo^{-1}\Se_{\tilde{\G}}=\oo^{-1} \Se$,
%      which deduces our claim.
 \end{proof}

    \begin{lemma}
        \label{lm:perturb}
        Let $r\in\Q_{>0}$ be arbitrary. For every $M>0$ there exists $r_M\in \Q$
        such that 
        \begin{equation}\label{eq:modceil}
        \lceil r\ell\rceil=\lceil r'\ell\rceil\quad\mbox{ for every } \ell\in\Na, \ell\leq M
        \mbox{ and }r'\in(r_M,r].    
        \end{equation}
    \end{lemma}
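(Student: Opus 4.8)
The plan is to exploit that, for each fixed $\ell$, the quantity $\lceil r'\ell\rceil$ remains equal to $\lceil r\ell\rceil$ as long as $r'$ is not allowed to decrease below a rational threshold depending on $\ell$; taking $r_M$ to be the largest of the finitely many thresholds arising from $\ell\leq M$ then does the job.

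First I would fix $\ell\in\Na$ with $1\leq \ell\leq M$ (the case $\ell=0$ being trivial, since both ceilings vanish) and set $m:=\lceil r\ell\rceil$. The equality $\lceil r'\ell\rceil=m$ holds precisely when $m-1<r'\ell\leq m$. Since we restrict to $r'\leq r$ and already have $r\ell\leq m$, the upper constraint $r'\ell\leq m$ is automatic, so the only binding condition is the lower bound $r'>(m-1)/\ell=(\lceil r\ell\rceil-1)/\ell$.

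Next I would define
\[
   r_M:=\max_{1\leq \ell\leq M}\frac{\lceil r\ell\rceil-1}{\ell},
\]
which is rational, being the maximum of finitely many rationals. The one point requiring care is that the interval $(r_M,r]$ be nonempty, i.e.\ that $r_M<r$. This follows from the strict inequality $\lceil x\rceil-1<x$, valid for every $x\in\Q$ (if $x\in\Z$ then $\lceil x\rceil-1=x-1<x$, and otherwise $\lceil x\rceil-1=\lfloor x\rfloor<x$): applying it with $x=r\ell$ gives $(\lceil r\ell\rceil-1)/\ell<r$ for each $\ell\geq 1$, and taking the maximum over finitely many indices preserves the strict inequality.

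Finally, for any $r'\in(r_M,r]$ and any $1\leq\ell\leq M$ I would combine the two bounds: from $r'>r_M\geq (\lceil r\ell\rceil-1)/\ell$ we get $r'\ell>\lceil r\ell\rceil-1$, hence $\lceil r'\ell\rceil\geq\lceil r\ell\rceil$; from $r'\leq r$ we get $r'\ell\leq r\ell\leq\lceil r\ell\rceil$, hence $\lceil r'\ell\rceil\leq\lceil r\ell\rceil$. Equality follows, which is exactly \eqref{eq:modceil}. The argument is essentially routine; the only genuine subtlety is guaranteeing that $(r_M,r]$ is nonempty, which is precisely where the strictness of $\lceil x\rceil-1<x$ is needed.
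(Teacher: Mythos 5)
Your proof is correct and takes essentially the same route as the paper: both arguments reduce the claim to a per-$\ell$ lower threshold on $r'$ (the upper constraint $r'\ell\leq \lceil r\ell\rceil$ being automatic from $r'\leq r$) and then take the worst case over the finitely many $\ell\leq M$. The only cosmetic difference is that the paper writes $r=\w/\al$ in lowest terms and uses the uniform threshold $r-\frac{1}{\al\ell}$, arriving at the explicit value $r_M=r-\frac{1}{\al M}$, whereas you use the exact threshold $(\lceil r\ell\rceil-1)/\ell$ and verify $r_M<r$ directly from $\lceil x\rceil-1<x$; both are valid.
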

    \begin{proof}
        Since $r\in\Q_{>0}$ one writes $r=\w/\al$, where $\w,\al\in\Na$ and $\gcd(\w,\al)=1$. For a fixed $\ell\in \mathbb{N}$ we introduce the notation $x:=\lceil \w\ell/\al \rceil$ for simplicity. Notice that if $\ell=0$ then (\ref{eq:modceil}) holds for any $r'$, so in the sequel we assume that $\ell\neq 0$.
        By the properties of the 
        function $t\mapsto \lceil t\rceil$ one knows the inequalities
        \[
        x-1+\frac{1}{\al}\le \frac{\w\ell}{\al}\le x.    
        \]  
        This implies that for any $r'\in(r-\frac{1}{\al\ell},r]$  we have
        $
        x\ge r\ell\ge r'\ell>r\ell-\frac{1}{\al}\ge x-1   
        $,
        hence $\lceil r'\ell\rceil=x=\lceil r\ell\rceil$.
        
        Thus, for a fixed $\ell$ we have constructed an interval for $r'$ such that (\ref{eq:modceil}) is satisfied. When $\ell$ is varying in $[0,M]$ we consider the intersection of these intervals 
        \[
            \bigcap_{0<\ell\le M} \left(r-\frac{1}{\al\ell},r\right]=\left(r-\frac{1}{\al M},r\right]=:(r_M,r].  
        \]
    
    \end{proof}

    \begin{lemma}\label{lem:pertN}
        Let $N\colon\Z\to\Z$, $N(\ell)=b_0\ell-\sum_{i=1}^n s_i\lceil \w_i\ell/\al_i\rceil$
        be a quasi-linear function of a representable semigroup. 
        Then for every $M\in \Na$ there exists a modification   
        $N'\colon\Z\to\Z, N'(\ell)=b_0\ell-\sum_{i=1}^{n} s_i\lceil \w'_i\ell/\al'_i\rceil$ (
        $\w'_i,\al'_i\in\Z_{>0}$, $0<\w_i'<\al'_i$ and $\gcd(\w'_i,\al'_i)=1$), such that 
        $\al'_1,\dots,\al'_n$ are pairwise relatively prime, $\gcd(\al'_i,s_i)=1$, and it satisfies  
        \[
        N'(\ell )=N(\ell) \ \ \mbox{for every} \ \ell\in \mathbb{N} \ \mbox{and} \ \ell\leq M .
        \] 
    \end{lemma}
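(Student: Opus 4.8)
\emph{Plan.} The idea is to perturb the slopes $\w_i/\al_i$ one at a time by means of Lemma \ref{lm:perturb}, choosing the perturbed denominators to be powers of distinct primes so that the coprimality requirements become automatic. First, for each $i\in\{1,\dots,n\}$ I would set $r_i:=\w_i/\al_i\in(0,1)$ and apply Lemma \ref{lm:perturb} with the given bound $M$; this yields a rational threshold $r_{i,M}<r_i$ such that $\lceil r'\ell\rceil=\lceil r_i\ell\rceil$ for every $\ell\in\Na$ with $\ell\le M$ and every $r'\in(r_{i,M},r_i]$. After raising $r_{i,M}$ if necessary I may assume $0\le r_{i,M}$, so the open interval $I_i:=(r_{i,M},r_i)$ is nonempty, has positive length, and is contained in $(0,1)$.

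\emph{The key step} is to locate suitable reduced fractions inside the $I_i$. I would fix pairwise distinct primes $p_1,\dots,p_n$ with $p_i\nmid s_i$ for each $i$; this is possible greedily, since at each stage only finitely many primes are forbidden (those dividing $s_i$ together with the already chosen primes) while infinitely many remain. For each $i$ I then choose a rational $r_i'=\w_i'/\al_i'\in I_i$, written in lowest terms, whose denominator $\al_i'$ is a power of $p_i$. Such a fraction exists because the set $\{a/p_i^{k}: k\ge 1,\ \gcd(a,p_i)=1\}$ is dense in $\R$: once $k$ is large enough that $p_i^{-(k-1)}$ is smaller than the length of $I_i$, the interval $I_i$ contains at least $p_i$ consecutive multiples of $p_i^{-k}$, and among any $p_i$ consecutive integers at least one is coprime to $p_i$, so at least one of these multiples is already in lowest terms with denominator $p_i^{k}$.

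With these choices the required properties hold immediately. The denominators $\al_i'=p_i^{k_i}$ are pairwise relatively prime since the $p_i$ are distinct; $\gcd(\al_i',s_i)=1$ since $p_i\nmid s_i$; and $\gcd(\w_i',\al_i')=1$ with $0<\w_i'<\al_i'$ because $r_i'\in(0,1)$ is reduced. Setting $N'(\ell):=b_0\ell-\sum_{i=1}^n s_i\lceil \w_i'\ell/\al_i'\rceil$, the defining property of $r_{i,M}$ gives $\lceil \w_i'\ell/\al_i'\rceil=\lceil \w_i\ell/\al_i\rceil$ for all $\ell\le M$, whence $N'(\ell)=N(\ell)$ on $\{0,1,\dots,M\}$ term by term. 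I would also observe that each $r_i'\le r_i$ only decreases the orbifold Euler number, so the Seifert data $(-b_0,s_1\times(\al_1',\w_1'),\dots,s_n\times(\al_n',\w_n'))$ stays negative definite and $N'$ is indeed the quasi-linear function of a representable semigroup; the statement, however, only asks for existence of an $N'$ of the prescribed shape.

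\emph{Main obstacle.} The genuinely nontrivial point is the \emph{simultaneous} control of the denominators: Lemma \ref{lm:perturb} only furnishes an interval around each $\w_i/\al_i$, and one must still produce reduced fractions in these intervals whose denominators are jointly coprime and coprime to the corresponding $s_i$. Forcing each denominator to be a power of its own prime reduces this packaging problem to the single, elementary density fact used above, and thereby clears the obstacle.
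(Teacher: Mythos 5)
Your proof is correct and follows essentially the same route as the paper: both reduce the problem, via Lemma \ref{lm:perturb}, to choosing reduced fractions in the prescribed intervals whose denominators are pairwise coprime and coprime to the $s_i$. The only difference is the arithmetic device for coprimality --- you take denominators that are powers of distinct primes avoiding the $s_i$, whereas the paper inductively picks denominators dividing $k\al'_1\cdots\al'_{i-1}s_i+1$ --- and your remark that the perturbation only decreases the orbifold Euler number, so negative definiteness is preserved, makes explicit a point the paper leaves implicit.
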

    \begin{proof}
    
        Let $\Gamma$ be the graph corresponding to $N(\ell)$. We will say that the $i$-th block of $\Gamma$ consists of the $s_i$ number of legs with Seifert invariant $(\al_i,\w_i)$. For a fixed $M\in\Na^*$ we will construct the modification by induction on $i$. 
        
        First we describe the inductive step. Assume that the $(i-1)$-th block is already modified. This means that the new Seifert invariants $(\al'_1,\w'_1),\dots,(\al'_{i-1},\w'_{i-1})$ are constructed in a way that $\al'_t$ are pairwise relatively primes and $\gcd(\al'_t,s_t)=1$ for any $t\in\{1,\dots,i-1\}$.  
        
        Then, for a large enough $k\in\Na$, by Lemma \ref{lm:perturb} one finds a rational number $r'\in(r'_{M,i},\w_i/\al_i)$ of the form $r'=x/(k\al'_1\dots\al'_{i-1}s_i+1)$ ($x\in\Na$) satisfying $\lceil r'\ell\rceil=\lceil\w_i\ell/\al_i\rceil$ for any $\ell\leq M$. This, written as  $\w'_i/\al'_i:=r'$ (where $\gcd(\w'_i,\al'_i)=1$) gives us the perturbation. Since  $\al'_i$ is a divisor of 
        $k\al'_1\dots\al'_{i-1}s_i+1$, it is relatively prime to all the $\al'_t$ with $t\leq i-1$ and $s_i$. In this way, we get the $i$-th block of the modified graph with $s_i$ legs, all having the Seifert invariant $(\al'_i,\w'_i)$.
        
        For $i=1$ we can start by distinguishing two cases:
        
        I. \ If $\gcd(\al_1,s_1)=1$ then we do not modify them and we set $(\al'_1,\w'_1):=(\al_1,\w_1)$. 
        
        II. \ If $\gcd(\al_1,s_1)\neq 1$ then we do the same as in the inductive step. Therefore,  we get a rational number of the form $r'=x/(ks_1+1)$ ($x\in\Na$) satisfying $\lceil r'\ell\rceil=\lceil\w_1\ell/\al_1\rceil$ for any $\ell\leq M$, which provides $\w'_1/\al'_1:=r'$.
        
        By the construction one follows that $N(\ell)=N'(\ell)$ for any $\ell\leq M$.
    \end{proof}
    
 \begin{theorem}\label{charthm}
A numerical semigroup is representable if and only if it is a quotient of a flat semigroup.  
 \end{theorem}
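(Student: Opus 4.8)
The plan is to treat the two implications separately. The implication that a quotient of a flat semigroup is representable is essentially already in hand: a flat semigroup is representable by Theorem \ref{thm_flat_rep}, and the quotient $\Se/k$ of any representable semigroup by $k\in\N^*$ is again representable by Lemma \ref{lem:quotientrep}. So if $\Se=\cT/k$ with $\cT$ flat, then $\cT$ is representable and hence so is $\Se$. The whole content of the theorem lies in the converse.

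For the converse I would fix a representable semigroup $\Se$ together with a representative SSR graph $\G$, grouping its Seifert invariants as $(-b_0,s_1\times(\al_1,\w_1),\dots,s_n\times(\al_n,\w_n))$ with associated quasi-linear function $N$. The goal is to replace $\G$ by a graph $\G'$ meeting the hypotheses of Theorem \ref{flat:divisor} — pairwise coprime $\al'_i\ge 2$ with $\gcd(\al'_i,s_i)=1$ — without changing the associated semigroup; then Theorem \ref{flat:divisor} directly exhibits $\Se=\Se_{\G'}=G(\al',s_1\z{\al}'_1,\dots,s_n\z{\al}'_n)/\oo'$ as a quotient of a flat semigroup. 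To build $\G'$ I would apply Lemma \ref{lem:pertN} with $M:=f_\Se$, the (finite) Frobenius number of $\Se$; this produces a modified function $N'(\ell)=b_0\ell-\sum_i s_i\lceil \w'_i\ell/\al'_i\rceil$ with the required arithmetic on the $\al'_i$ and with $N'(\ell)=N(\ell)$ for all $0\le\ell\le M$. (If $\Se=\N$ there is nothing to prove, as $\N$ is itself flat.)

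The key observation that makes the argument succeed, even though the new fundamental period $\al'=\prod_i\al'_i$ may be far larger than $M$, is that the perturbation in Lemmas \ref{lm:perturb} and \ref{lem:pertN} only ever decreases each slope, so $\w'_i/\al'_i\le\w_i/\al_i$. Hence $\lceil \w'_i\ell/\al'_i\rceil\le\lceil \w_i\ell/\al_i\rceil$ for every $\ell\ge 0$, which yields the global inequality $N'(\ell)\ge N(\ell)$ on all of $\Z_{\ge 0}$ (and in passing $|e'|>|e|>0$, so $\G'$ stays negative definite). From $N'\ge N$ I obtain $\Se=\Se_N\subseteq\Se_{N'}$. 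Conversely, any $\ell\in\Se_{N'}$ with $\ell\le M$ lies in $\Se$ because $N'$ and $N$ agree there, while any $\ell>M=f_\Se$ lies in $\Se$ automatically, since $\Se$ contains every integer beyond its Frobenius number. Therefore $\Se_{N'}=\Se$, and applying Theorem \ref{flat:divisor} to $\G'$ completes the proof.

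I expect the only genuine obstacle to be the one just isolated: controlling membership on the unbounded range $\ell>M$, where $N'$ and $N$ need no longer coincide and where the new period $\al'$ can exceed $M$, so that a naive periodicity argument over a full fundamental domain is unavailable. The monotonicity $N'\ge N$, combined with the finiteness of the Frobenius number, is precisely what dissolves this difficulty and reduces everything to the bounded comparison supplied by Lemma \ref{lem:pertN}.
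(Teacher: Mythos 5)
Your proposal is correct and follows essentially the same route as the paper: perturb the Seifert invariants via Lemma \ref{lem:pertN} and then invoke Theorem \ref{flat:divisor}, with the easy direction handled by Lemma \ref{lem:quotientrep} and Theorem \ref{thm_flat_rep}. The only (valid) variation is in the inclusion $\Se\subseteq\Se_{\G'}$: the paper takes $M$ to be the maximum of $f_\Se$ and the largest minimal generator so that all generators survive the perturbation, whereas you take $M=f_\Se$ and instead observe that the perturbation only decreases the slopes $\w_i/\al_i$, giving $N'\ge N$ globally — a slightly cleaner argument that also makes the negative definiteness of $\G'$ transparent.
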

 
 \begin{proof}
By  Lemma \ref{lem:quotientrep} and Theorem \ref{thm_flat_rep} follow that a quotient of a flat semigroup is representable. 
For the converse, let $S$ be a representable semigroup and consider one of its representative $\Gamma$ defined by the Seifert invariants $(-b_0,s_1\times(\al_1,\w_1),\dots,s_{n}\times(\al_{n},\w_{n}))$. Let $M$ be the maximum of the largest generator (in the minimal set of generators) and the Frobenius number $f_\calS$. If we apply Lemma \ref{lem:pertN} for this fixed $M$, we get a new graph $\Gamma'$ defined by the perturbed Seifert invariants $(-b_0,s_1\times(\al'_1,\w'_1),\dots,s_{n}\times(\al'_{n},\w'_{n}))$ satisfying that  $\al'_1,\dots\al'_n$ are pairwise relatively prime integers and $\gcd(\al'_i,s'_{i'})=1$. Moreover, the associated semigroup $\calS_{\Gamma'}$ coincides with $\calS$. Indeed, the identity $N(\ell)=N(\ell')$ valid for all the integers $\ell\leq f_S$ implies that  $\calS_{\Gamma'}\subset \calS$. On the other hand, $N(\ell)=N(\ell')$ for all the integers smaller than the largest generator deduces  that $\calS\subset \calS_{\Gamma'}$. Finally, Theorem \ref{flat:divisor} applied to $\Gamma'$ clarifies the statement.
 \end{proof}

\begin{example} 
Consider the non-flat semigroup $G(4,6,7,9)$. We claim that it is representable and one of its representative is given by the SSR graph with Seifert invariants $Sf=(-2, 2\times (2,1), 2\times (4,1), (5,1))$ (the calculations were performed using GAP\cite{GAP4}). Following our previous discussions, a perturbation of the Seifert invariants can be chosen as follows.

Let us first consider the two legs with $(2,1)$. 
Note that  $M=9$ and $(2,1)$ can be changed to eg. $(11,5)$ since $\frac{5}{11}\in (\frac{1}{2}-\frac{1}{18}=\frac{4}{9},\frac{1}{2})$. Similarly,  $(4,1)$ can be changed to $(13,3)$. These perturbations give us a new graph defined by the  Seifert invariants $Sf=(-2, 2\times (11,5), 2\times (13,3), (5,1))$, which has $\oo=307$ and satisfies the assumptions of Theorem \ref{flat:divisor}. Finally, we have
$$G(4,6,7,9)=\frac{1}{307}G(110,130,143).$$
\end{example}
    
\subsection{Further speculations and remarks} \label{ss:specrem}

We would like to propose a general question and emphasize some directions opened by the problem studied in this article. 

In the theory of numerical semigroups one knows by \cite{symmhalf} (see also \cite{book_nsgp}) that every numerical semigroup 
can be presented as one half of  a symmetric numerical semigroup. More generally, for a fixed $d\geq 2$, every semigroup  can be presented as one over $d$ of a symmetric numerical semigroup, cf. \cite{symmquotient}.  Note that the Example \ref{ex:G35_2} is also an example for the construction given by Rosales and Garc\'ia-S\'anchez in \cite{symmhalf}. 

As a consequence, by applying Lemma \ref{lem:quotientrep}, one follows that if we can prove that every symmetric semigroup is representable, then 
every numerical semigroup is representable. This approach naturally poses the following question. 
\begin{question}
 Is there a symmetric numerical semigroup which is not representable?
\end{question}

By the knowledge of the authors, there is no good understanding how the symmetric property of semigroups incarnates on the level of the representatives, or how does it fit in the `flat' classification theme of Raczunas and Chrz\c astowski-Wachtel. 

For example, one can construct representable 
 semigroups which are not symmetric, but they have a numerically Gorenstein representative:
 \begin{example}[Non-symmetric numerically Gorenstein case \cite{stronglyflat}]
    Let $\G$ be the graph 
    defined by the Seifert invariants 
    $$Sf=(-2,(2,1),(2,1),(3,1),(3,1),(7,1),(7,1),(84,1)).$$
    Then  $Z_K=(86,43,43,29,29,13,13,2)$,
    hence $\G$ is numerically Gorenstein. In addition,
    one can compute that $\gamma=85$, $1/|e|=28$
    and $\check{s}=28$ (cf. (\ref{eq:frob}) ), thus the Frobenius number equals $f_{\Se_\G}=85$.
    In addition $N(6)=N(85-6)=-1$ which implies $6,f_{\Se_\G}-6\notin\Se_\G$,
    hence it is not symmetric.
\end{example}
 One the other hand, using the proof of Theorem \ref{thm_ngor0_sym_and_frob} one can construct numerical semigroups
 which  are symmetric, but not flat, as shown by the next example. Therefore, the set of symmetric semigroups is rather bigger than the set  of the flat ones. 
 \begin{example}[Symmetric but not flat]
    Let $\G$ be the graph defined by the Seifert invariants
    \[
     Sf=(-2,(35,13),(35,13),(21,13),(21,13)).   
    \]
    One can verify eg. with \cite{GAP4} that $\Se_\G=G(8,21,35)$. Hence
    $\Se_\G$ is not a flat but an almost flat semigroup. However, it is symmetric. 
 \end{example}

\end{document}